\theoremstyle{plain}
\newtheorem{thm}{Theorem}[section] 
\newtheorem{theorem}{Theorem}[section] 
\newtheorem{cor}[thm]{Corollary}
\newtheorem{corollary}[thm]{Corollary}
\newtheorem{lem}[thm]{Lemma}
\newtheorem{lemma}[thm]{Lemma}
\newtheorem{prop-defi}[thm]{Definition \& Proposition}
\newtheorem{prop}[thm]{Proposition}
\newtheorem{por}[thm]{Porism}
\newtheorem*{thm*}{Theorem}
\newtheorem*{prop*}{Proposition}
\newtheorem*{cor*}{Corollary}
\newtheorem{proposition}[thm]{Proposition}
\theoremstyle{definition}
\newtheorem{defi}[thm]{Definition}
\newtheorem{definition}[thm]{Definition}
\newtheorem{rem}[thm]{Remark}
\newtheorem{remark}[thm]{Remark}
\newcommand{\NN}{{\mathbb N}}
\newcommand{\ZZ}{{\mathbb Z}}
\newcommand{\CC}{{\mathbb C}}
\newcommand{\FF}{{\mathbb F}}
\newcommand{\F}{{\mathcal F}}
\renewcommand{\P}{{\mathcal P}}
\newcommand{\G}{{\mathscr G}}
\newcommand{\B}{{\mathscr B}}
\renewcommand{\H}{\mathscr{H}}
\newcommand{\ip}[2]{\left\langle {#1}\hspace{0.05cm}, \hspace{0.05cm}{#2}\right \rangle}
\newcommand{\varps}{{\varepsilon}}
\newcommand{\rg}{{\operatorname{rg\hspace{0.04cm}}}}
\newcommand{\tens}{\otimes}
\newcommand{\spann}{{\operatorname{span}}}
\newcommand{\To}{\longrightarrow}
\newcommand{\supp}{{\operatorname{supp}}}
\newcommand{\Tor}{\operatorname{Tor}}
\newcommand{\id}{\operatorname{id}}
\newcommand{\Tr}{\operatorname{Tr}}
\renewcommand{\leq}{\leqslant}
\renewcommand{\geq}{\geqslant}
\newcommand{\twoone}{{\operatorname{II}_1}}
\newcommand{\mult}{{\operatorname{mult}}}
\newcommand{\bbb}{{\mathbbm{1}}}
\newcommand{\rtimescom}{{\bar{\rtimes}}}
\newcommand{\abexact}[2]{\phantom{ }_{#1\rightarrow #2}\textrm{exact}}
\newcommand{\abflat}[2]{\phantom{ }_{#1\rightarrow #2}\textrm{flat}}
\newcommand{\co}{{\operatorname{co}}}
\newcommand{\all}{{\operatorname{all}}}
\address{David Kyed,
Department of Mathematics,
KU Leuven,
Celestijnenlaan 200B,
B-3001 Leuven, 
Belgium.}
\email{David.Kyed@wis.kuleuven.be}
\urladdr{www.kuleuven.be/~u0078326}
\address{Henrik Densing Petersen, SB-MATHGEOM-EGG, EPFL, Station 8, CH-1015,
Lausanne, Switzerland.}
\email{henrik.petersen@epfl.ch}
\urladdr{www.math.ku.dk/~hdp}
\title{A groupoid approach to  L{\"u}ck's amenability conjecture}
\author{David Kyed}
\author{ Henrik D.~Petersen}
\keywords{Amenability, Groupoids,  Dimension flatness}
\subjclass[2010]{43A07, 18B40, 18G10, 46L10} 
\thanks{The research of the first named author is funded by The Danish Council for Independent Research $|$ Natural Sciences and the ERC Starting Grant VNALG-200749}
\thanks{The second named author acknowledges the support of the Danish National Research Foundation through the Centre for Symmetry and Deformation.}
\begin{document}

\begin{abstract}
We prove that amenability of a discrete group is equivalent to dimension flatness of certain ring inclusions naturally associated with measure preserving actions of the group. This provides a group-measure space theoretic solution to a conjecture of L{\"u}ck stating that amenability of a group is characterized by dimension flatness of the inclusion of its complex group algebra into the associated von Neumann algebra.

\end{abstract}

\maketitle

\section{Introduction}
The theory of $L^2$-invariants was re-formulated in terms of homological algebra by L{\"u}ck \cite{Luck97,Luck98} (see also \cite{farber}) in the late 1990's, and this prompted the importance of investigating the ring-theoretical properties of the group ring $\CC\Gamma$ associated with a discrete group $\Gamma$. In this context, one very natural question to ask is when the inclusion of  $\CC\Gamma$ into the group von Neumann algebra $L\Gamma$ is flat, but this turns out to be true only for a very limited class of groups: it is the case when $\Gamma$ is virtually cyclic and conjecturally \cite[Conjecture 6.49]{Luck02} only then. However, utilizing the von Neumann dimension $\dim_{L\Gamma}(-)$ arising from the natural trace on $L\Gamma$, one can relax the definition of flatness, arriving at the notion of dimension-flatness, which is simply defined by demanding that the functor $L\Gamma\otimes_{\CC\Gamma} -$ maps injective $\CC\Gamma$-homomorphisms to $L\Gamma$-homomorphisms with zero-dimensional kernel. This property turns out to be far less restrictive than actual flatness of the inclusion $\CC\Gamma\subseteq L\Gamma$, and in \cite[Theorem 5.1]{Luck98} L{\"u}ck proves that it holds for all amenable groups and conjectures this to be a characterization of amenability. In the present paper we investigate various group-measure space theoretic versions of this conjecture  --- henceforth referred to as \emph{L{\"u}ck's amenability conjecture}. Our primary setting will be that of translation groupoids arising from free probability measure preserving actions of discrete groups, and using  Gaboriau-Lyons' measure-theoretic solution to von Neumann's problem \cite{gaboriau-lyons} we prove the following:
\begin{thm*}[See Porism \ref{groupoid-porism}]
A discrete group $\Gamma$ is amenable if and only if the following holds: for any free, ergodic, probability measure-preserving action of $\Gamma$ on a non-atomic standard Borel space $(X,\mu)$ the inclusion of the corresponding groupoid ring $\CC[\G]$ into the  groupoid von Neumann algebra $L(\G)$ is dimension  flat.
\end{thm*}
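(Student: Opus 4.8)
The plan is to prove both directions of the equivalence, with the easy direction being that amenability implies dimension flatness, which should follow essentially from Lück's original theorem together with a transfer of his argument to the groupoid setting. Indeed, if $\Gamma$ is amenable, then so is the translation groupoid $\G = \Gamma \ltimes (X,\mu)$ for any p.m.p.\ action, and one expects the groupoid ring $\CC[\G]$ to inherit the homological tameness that Lück exploited for $\CC\Gamma$. Concretely, I would seek to establish dimension flatness of $\CC[\G] \subseteq L(\G)$ by approximating the groupoid von Neumann algebra through an increasing net of finite-dimensional (or hyperfinite) subalgebras coming from amenability, and then running the standard local criterion for dimension flatness: it suffices to check that for every finitely presented $\CC[\G]$-module $M$ one has $\dim_{L(\G)} \Tor_1^{\CC[\G]}(L(\G), M) = 0$, which reduces to a computation on finitely generated matrices that can be exhausted by the hyperfinite tower.

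The genuinely hard direction is the converse: nonamenability of $\Gamma$ must obstruct dimension flatness for \emph{some} free ergodic p.m.p.\ action. Here the key input is the Gaboriau--Lyons theorem, which provides, for any nonamenable $\Gamma$, a free ergodic p.m.p.\ action on a non-atomic standard Borel space $(X,\mu)$ such that the translation groupoid $\G$ contains a free ergodic subgroupoid induced by a free group $\FF_2$ acting on $(X,\mu)$ with the same orbits (up to finite index / measure-theoretic control). The strategy is then to import the known failure of dimension flatness for the free group: since $\FF_2$ is nonamenable, the inclusion $\CC\FF_2 \subseteq L\FF_2$ is \emph{not} dimension flat, and one wants to propagate this failure across the subgroupoid inclusion into $\CC[\G] \subseteq L(\G)$. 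I would therefore construct an explicit finitely presented module over the sub-groupoid ring $\CC[\H]$ (modeled on the module witnessing nonflatness for $\FF_2$, e.g.\ one built from the augmentation ideal or from a carefully chosen $2 \times 1$ matrix over the group ring) and then show that induction/extension of scalars along $\CC[\H] \hookrightarrow \CC[\G]$ preserves the positivity of the relevant $\Tor$-dimension.

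The main obstacle, which I expect to dominate the technical work, is exactly this propagation step: one must guarantee that the positive-dimensional kernel detected at the level of $L(\H)$ survives the passage to $L(\G)$, i.e.\ that no dimension is ``lost'' when inducing up along the inclusion of von Neumann algebras. This requires a dimension-comparison or restriction/induction formula relating $\dim_{L(\H)}$ and $\dim_{L(\G)}$ for the relevant Tor groups, so that a nonzero-dimensional obstruction over the subgroupoid forces a nonzero-dimensional obstruction over the full groupoid. The cleanest route is probably to establish that restriction along $\H \subseteq \G$ is compatible with the dimension functions and that the von Neumann algebra $L(\H) \subseteq L(\G)$ is a (trace-preserving) corner or an inclusion with expectation, so that induced modules carry their dimension faithfully; combined with the local Tor-criterion this would yield the desired non-flatness. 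I anticipate that verifying the measure-theoretic hypotheses of Gaboriau--Lyons align with the ``free, ergodic, non-atomic'' quantifier in the statement, and checking that the constructed module is genuinely finitely presented over $\CC[\G]$, will be the most delicate bookkeeping.
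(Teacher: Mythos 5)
Your overall strategy coincides with the paper's: the forward direction via amenability (the paper does not redo L\"uck's argument for groupoids but quotes the crossed-product result $L^\infty(X)\rtimes\Gamma\subseteq L^\infty(X)\rtimescom\Gamma$ being dimension flat for amenable $\Gamma$, and then Sauer's identification of $\Tor^{\CC[\G]}$ with $\Tor^{L^\infty(X)\rtimes\Gamma}$), and the converse via Gaboriau--Lyons plus a propagation of the $\FF_2$-obstruction up to $\G$. Two points in the converse direction, however, are genuine gaps rather than bookkeeping. First, the justification ``since $\FF_2$ is nonamenable, the inclusion $\CC\FF_2\subseteq L\FF_2$ is not dimension flat'' is circular as stated: that implication is exactly the conjecture under discussion. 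What one actually uses is the concrete fact $\beta_1^{(2)}(\FF_2)=1>0$, i.e.\ $\dim_{L\FF_2}\Tor_1^{\CC\FF_2}(L\FF_2,\CC)=1$, together with Sauer's theorem that the $L^2$-Betti numbers of a group agree with those of its translation groupoid; the latter converts the group-ring statement into $\dim_{L\H}\Tor_1^{\CC\H}(L\H,L^\infty(X))=1$ for $\H=\mathcal{R}_{\FF_2\curvearrowright X}$, with the witness module being $L^\infty(X)$ (via the augmentation), not the trivial module $\CC$. That passage from $\CC\FF_2$ to the groupoid ring $\CC\H$ is itself a nontrivial cited input, not something obtained by merely ``modeling'' a module on the augmentation ideal.

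Second, and more seriously, the propagation step you flag as ``the most delicate bookkeeping'' is the actual mathematical content of the paper, and the mechanism you propose (that $L\H\subseteq L\G$ admits a trace-preserving expectation so that induced modules carry dimension faithfully) only handles half of it. The easy half is replacing $\dim_{L\H}$ by $\dim_{L\G}$, which is L\"uck's theorem that $L\G\otimes_{L\H}-$ is faithfully flat and dimension preserving. The hard half is changing the base ring of the Tor group from $\CC\H$ to $\CC\G$: one needs $\dim_{L\G}\Tor_1^{\CC\H}(L\G,L^\infty(X))=\dim_{L\G}\Tor_1^{\CC\G}(L\G,\CC\G\otimes_{\CC\H}L^\infty(X))$, and this base-change formula is not automatic because $\CC\G$ is not known to be flat over $\CC\H$ in the ordinary sense (only a dimension-flat version is available). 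The paper proves it by first showing that $\CC\G\otimes_{\CC\H}-$ is exact up to $\dim_{L^\infty(X)}$-zero errors (Proposition \ref{prop:dimflatness}), which requires the measurable coset-decomposition machinery of Section \ref{sec:groupoids} ($\H$-reduced forms, the sets $E_{(i,l),j}$, the slice maps of Lemma \ref{slice-maps-lem}), and then a dimension-adapted comparison of resolutions (Lemma \ref{lma:dimres} and Corollary \ref{lma:flatbasechange}). Without an argument for this step the proof does not close: a conditional expectation at the von Neumann level does not by itself control the homological algebra over the dense subrings.
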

Furthermore, we obtain the following version of the above result which is somewhat more group theoretical in nature:

\begin{thm*}[See Theorem \ref{amenability-to-dim-flat-thm}, \ref{non-flat-thm-all-borel-sets} \& Proposition \ref{fg-ground-ring}] 
A discrete countable group $\Gamma$ is amenable if and only if the following holds: For any finite cyclic group $C$ and any non-trivial system $\B$ of Borel subsets in $X:=\prod_{\Gamma}C$, which is stable under complements, finite intersections and the Bernoulli $\Gamma$-action, the  algebra $\CC[\B]$ generated by the corresponding  indicator functions satisfies that inclusion of the algebraic crossed product $\CC[\B]\rtimes \Gamma$ into the von Neumann algebraic crossed product $ L^\infty(X)\rtimescom \Gamma$ is dimension flat. Moreover, when $\Gamma$ is non-amenable the Borel system $\B$  for which the the inclusion is not dimension flat can be chosen such that $\CC[\B]$ has countable linear dimension and is finitely generated as a module over $\CC\Gamma$.
\end{thm*}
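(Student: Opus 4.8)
The plan is to prove the two implications separately and then establish the finite-generation refinement. For the forward direction---amenability of $\Gamma$ forces dimension flatness of every such inclusion---I would first note that amenability of $\Gamma$ renders the translation groupoid $\Gamma\ltimes X$ an amenable measured groupoid, and then run the groupoid analogue of L\"uck's original averaging argument. Concretely, a F\o lner sequence $(F_n)$ in $\Gamma$ provides finite-dimensional approximations of $L(\G)=L^\infty(X)\rtimescom\Gamma$ that are compatible with the $\Gamma$-action, and these furnish the uniform dimension estimates needed to show that $L(\G)\otimes_{\CC[\B]\rtimes\Gamma}(-)$ sends injections to maps with zero-dimensional kernel. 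The crucial observation is that the obstruction to flatness lives entirely in the $\Gamma$-direction, which the F\o lner averaging controls; since the averaging never refers to the size of $\B$, the same argument applies simultaneously to every $\Gamma$-invariant system $\B$. I expect this to be the more routine direction.

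For the converse I would argue contrapositively, assuming $\Gamma$ non-amenable and producing a system $\B$ witnessing the failure of dimension flatness. Here Gaboriau--Lyons is the essential input: their solution to the measurable von Neumann problem yields, inside the orbit equivalence relation $\R_\Gamma$ of the Bernoulli action $\Gamma\curvearrowright X=\prod_\Gamma C$, an ergodic subrelation $\R$ realised by a free probability measure-preserving action of the free group $F_2$ through a finite graphing. Because each graphing generator moves points within $\Gamma$-orbits, it is implemented in the crossed product by an element $\sum_{\gamma}\mathbf 1_{B_\gamma}u_\gamma$, where $B_\gamma$ is the Borel set on which the generator coincides with the action of $\gamma$. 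I would then take $\B$ to be the $\Gamma$-invariant Boolean algebra generated by these finitely many sets $B_\gamma$; finiteness of $C$ and of the graphing support makes $\B$ a countable system, so $\CC[\B]$ has countable linear dimension, and by construction $\CC[\B]\rtimes\Gamma$ contains the relation ring $\CC[\R]$ of the $F_2$-action.

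The heart of the matter is to convert the positivity $\bet_1(\R)=\bet_1(F_2)=1$---obtained from Gaboriau's formula equating the $L^2$-Betti numbers of a free action with those of the acting group---into an obstruction to dimension flatness of $\CC[\B]\rtimes\Gamma\hookrightarrow L(\G)$. The strategy is to identify a suitable first $\Tor$-term $\Tor_1^{\CC[\B]\rtimes\Gamma}(L(\G),M)$, evaluated on the coefficient module computing relation $L^2$-homology, with an $L^2$-homology group of $\R$, and to show, using that the trace-preserving inclusion $L\R\subseteq L(\G)$ keeps von Neumann dimensions comparable, that this term has strictly positive dimension. Since dimension flatness would force it to vanish, we obtain the desired contradiction. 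Making the positive first $L^2$-Betti number of the $F_2$-subrelation visible through the comparatively small ground ring $\CC[\B]$---rather than through the full groupoid, whose first $L^2$-Betti number $\bet_1(\G)=\bet_1(\Gamma)$ may well vanish---is exactly the step I expect to be the main obstacle, and it is where the bookkeeping of the graphing and the induced module structure must be handled with care.

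Finally, the finite-generation assertion amounts to checking that this particular $\B$ yields a $\CC[\B]$ that is finitely generated as a module over $\CC\Gamma$. This follows by verifying that the finitely many Borel sets $B_\gamma$ generating $\B$, once closed up under the Boolean operations, span a $\CC\Gamma$-module with finitely many generators; the finiteness of $C$ and of the graphing again makes this a direct, if somewhat technical, computation.
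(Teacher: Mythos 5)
Your outline correctly identifies the two essential external inputs (L\"uck/F{\o}lner-type averaging for the amenable direction, Gaboriau--Lyons for the converse), but in both directions the step that actually carries the weight of the proof is missing, and in one place the guiding heuristic is wrong. For the forward direction, the claim that ``the obstruction to flatness lives entirely in the $\Gamma$-direction'' is not correct: the inclusion $\CC[\B]\subseteq L^\infty(X)$ is already a proper inclusion of a (possibly countable-dimensional) $*$-algebra into $L^\infty(X)$ and is only \emph{dimension} flat, not flat, so its dimension flatness must be proved separately. The paper does this by verifying a strong F{\o}lner condition \emph{in the measure-space direction} (Theorem \ref{commutative-dim-flatness}), and this is exactly where the hypotheses that $\B$ be non-trivial and stable under the Bernoulli action enter --- ergodicity is what lets a single non-trivial set generate a system approximating every Borel set in measure. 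One then still needs a device to splice this commutative layer together with the amenability input (dimension flatness of $L^\infty(X)\rtimes\Gamma\subseteq L^\infty(X)\rtimescom\Gamma$), namely a dimension-flat base change formula (Corollary \ref{dim-flat-base-change-v2}), whose proof rests on rank-completion techniques and the fact that projective $\dim$-resolutions compute $\Tor$ up to dimension (Lemma \ref{lma:dimres}). Your sketch contains neither ingredient, and F{\o}lner averaging over $\Gamma$ alone does not see the subalgebra $\CC[\B]\subsetneq L^\infty(X)$ at all.

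For the converse, your strategy is the right one at top level, but the step you yourself flag as ``the main obstacle'' --- converting $\beta_1^{(2)}(\FF_2)=1$ into positivity of $\dim_{L\G}\Tor_1^{\CC[\B]\rtimes\Gamma}(L\G,-)$ --- is precisely the content of the paper's Sections 2 and 3 and is not supplied. Concretely one needs: the homogeneous-space module $\CC[\G/\H]$ with its decomposition lemma; dimension flatness of $\CC\H\subseteq\CC\G$ relative to $L^\infty(X)$ (Proposition \ref{prop:dimflatness}); the dimension-isomorphism $\CC\G\otimes_{\CC\H}L^\infty(X)\simeq\CC[\G/\H]$ (Proposition \ref{the-first-iso}); and an explicit $\CC[\B]\rtimes\Gamma$-module $K$ whose induction to $L^\infty(X)\rtimes\Gamma$ is dimension-isomorphic to $\CC[\G/\H]_t$. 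Your choice of $\B$ (the $\Gamma$-invariant Boolean algebra generated by the graphing sets $B_\gamma$) also diverges from the paper's in a way that matters: the paper first proves non-flatness for $\B_{\all}$, where the witnessing module is spanned by indicators of arbitrary target-bounded subsets of $\G/\H$ and the injectivity of the multiplication map uses that $\bbb_{t(F_j)}\in\CC[\B_{\all}]$ for arbitrary Borel partitions $F_j$ of $\G/\H$; only afterwards does it cut down to a countable $\B$, and it does so not via the graphing but via the finitely many Borel sets occurring in a presentation matrix of a finitely presented witnessing module, followed by a double application of base change (Proposition \ref{fg-ground-ring}). With your $\B$ it is not clear the witnessing module is even a $\CC[\B]\rtimes\Gamma$-module, nor that the required identification survives. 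Finally, the finite-generation claim is not the routine check you describe: closing the $B_\gamma$ under $\Gamma$-translates \emph{and} finite intersections produces products of translates of the generators, which do not lie in a finite $\CC\Gamma$-linear span in any obvious way, so this point too needs the paper's more careful formulation.
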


Our results are based on a detailed analysis of certain aspects of homological algebra ``relative to a dimension-function'' and along the way we prove several results of a general nature. When applied to the case of groupoid algebras they imply the following theorem, which in turn will be the key to the two dimension-flatness theorems mentioned above.

\begin{thm*}[See Corollary \ref{lma:flatbasechange} \& Proposition \ref{prop:dimflatness}]
If $\H$ is a sub-groupoid of a discrete measured groupoid $\G$ then the corresponding inclusion  $\CC\H\subseteq \CC\G$ is dimension flat relative to the von Neumann algebra of essentially bounded functions on their common base space. Furthermore, for any $\CC\H$-module $K$ and any $p\geq 0$ we have
\[
\dim_{L\G}\Tor_p^{\CC\H}(L\G, K)=\dim_{L\G}\Tor_p^{\CC\G}(L\G, \CC\G\underset{\CC\H}{\otimes}K)
\]
\end{thm*}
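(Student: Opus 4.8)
The plan is to reduce both assertions to a single structural fact: that $\CC\G$ is projective --- indeed a direct sum of finitely generated projectives --- as a right (equivalently, left) $\CC\H$-module. Granting this, the base-change identity becomes pure homological algebra and the dimension flatness is immediate, so the real work is concentrated in establishing the structural fact.

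First I would produce this decomposition from the coset bundle $\H\backslash\G$. Since $\G$ is a discrete measured groupoid, its fibres over the common base space $X$ are countable, and $\G$ is covered by countably many measurable bisections; relativizing to $\H$, one selects a countable measurable transversal $\{\sigma_n\}$ realizing a partition $\G=\bigsqcup_n\H\sigma_n$. Writing $\chi_{\sigma_n}\in\CC\G$ for the partial translation supported on $\sigma_n$ and $f_n:=\chi_{\sigma_n}\chi_{\sigma_n}^{*}\in A:=L^\infty(X)$ for the idempotent onto its range set, one obtains $\CC\G\cong\bigoplus_n\CC\H\,\chi_{\sigma_n}\cong\bigoplus_n\CC\H f_n$ as left $\CC\H$-modules. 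Each summand $\CC\H f_n$ is a direct summand of $\CC\H$, because $f_n$ is an idempotent of the subring $A\subseteq\CC\H$, hence finitely generated projective; and a direct sum of projectives is projective. Applying the groupoid inverse interchanges the two sides, so $\CC\G$ is projective, in particular flat, on either side over $\CC\H$.

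Next I would deduce the displayed formula by flat base change. Let $P_\bullet\to K$ be a projective resolution of the $\CC\H$-module $K$. As $\CC\G$ is flat over $\CC\H$, the complex $\CC\G\otimes_{\CC\H}P_\bullet$ is a resolution of $\CC\G\otimes_{\CC\H}K$ with terms $\CC\G\otimes_{\CC\H}P_i$ projective over $\CC\G$, hence a projective $\CC\G$-resolution. Tensoring with $L\G$ over $\CC\G$ and invoking the associativity isomorphism $L\G\otimes_{\CC\G}(\CC\G\otimes_{\CC\H}P_i)\cong L\G\otimes_{\CC\H}P_i$ identifies the two $\Tor$-complexes, giving a natural isomorphism of left $L\G$-modules
\[
\Tor_p^{\CC\G}\!\bigl(L\G,\ \CC\G\otimes_{\CC\H}K\bigr)\cong\Tor_p^{\CC\H}(L\G,K)\qquad(p\geq 0).
\]
Isomorphic $L\G$-modules have equal $\dim_{L\G}$, which yields the asserted equality and is the content I expect to be packaged as Corollary \ref{lma:flatbasechange}. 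For the first assertion, the same structural fact shows the inclusion $\CC\H\subseteq\CC\G$ is genuinely flat, and any flat ring inclusion is a fortiori dimension flat relative to the dimension function $\dim_A$ induced by the trace on $A=L^\infty(X)$; this is what Proposition \ref{prop:dimflatness} records.

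The genuinely non-formal step --- and the one I expect to be the main obstacle --- is the measurable coset decomposition: producing a countable measurable transversal $\{\sigma_n\}$ of bisections for $\H\backslash\G$ and verifying that convolution by these splits $\CC\G$ as an \emph{internal} direct sum of the $A$-weighted pieces $\CC\H f_n$. This rests on a measurable selection argument of Lusin--Novikov type, together with care about the non-unital, idempotent-cut module structure (the summands are $\CC\H f_n$ rather than copies of $\CC\H$). It is here that the groupoid geometry, rather than the formal homological algebra, does the real work, and getting the selection and the internal-direct-sum bookkeeping right is the crux of the proof.
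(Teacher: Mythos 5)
Your argument stands or falls with the structural claim that $\CC\G$ decomposes as an \emph{algebraic} direct sum $\bigoplus_n \CC\H f_n$ over a measurable transversal, and that claim is false. The decomposition of $\G$ into right $\H$-cosets is fibred over $X$: over each point $x$ the fibre $t^{-1}(x)$ splits into orbits $E_i^x$, but the labels of the orbits met by $\supp(f)$ vary measurably with $x$. A single $f\in\CC\G$ --- for instance the indicator of one bisection which lies in the $i$-th coset over the piece $X_i$ of an infinite partition $X=\bigsqcup_i X_i$ --- meets infinitely many of the sets $E_i$ and is therefore \emph{not} a finite $\CC\H$-combination of the transversal elements. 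This is exactly why Lemma \ref{lma:decomposition} carries an $\varepsilon$: only after cutting by $\bbb_Y$ with $\mu(Y)\geq 1-\varepsilon$ does $f$ land in the right $\CC\H$-span of the $\bbb_{E_{(i,l),1}}$; that span is merely \emph{rank dense} in $\CC\G_t$, not equal to it. Consequently you have not shown that $\CC\G$ is projective, or even flat, over $\CC\H$ --- and the paper does not claim this either. Proposition \ref{prop:dimflatness} asserts only $\dim_{L^\infty(X)}$-exactness of $\CC\G\otimes_{\CC\H}-$, and its proof needs Sauer's local criterion (Theorem \ref{sauers-local-criterion}) together with the slice maps of Lemma \ref{slice-maps-lem} to show that $\ker(\id\otimes\iota)$ is zero-dimensional rather than zero.

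The gap propagates into your second step. Without honest flatness, $\CC\G\otimes_{\CC\H}P_\bullet$ is only a projective $\dim_{L^\infty(X)}$-resolution of $\CC\G\otimes_{\CC\H}K$, so it cannot be fed directly into the definition of $\Tor^{\CC\G}$; one must know that such approximate resolutions still compute $\Tor$ up to $\dim_{L\G}$. That is precisely Lemma \ref{lma:dimres}, whose proof passes through rank completions (Lemma \ref{completion-properties}) and the comparison theorem, and it is there --- not in the associativity of the tensor product --- that the real work behind Corollary \ref{lma:flatbasechange} is done; the conclusion is an equality of dimensions, not an isomorphism of $\Tor$ groups. So while the overall architecture of your reduction is sensible, both of its pillars (projectivity of $\CC\G$ over $\CC\H$, and the resulting identification of the two $\Tor$ complexes) are unavailable, and the correct dimension-theoretic weakenings require the completion machinery you hoped to bypass.
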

The rest of the paper is organized as follows. In Section \ref{sec:groupoids} we recall the necessary notions from the theory of discrete measured groupoids and introduce the module of functions on the homogenous space arising from an inclusion of such groupoids. This construction will turn out to be essential for the sections to come. In Section \ref{sec:homalg} we develop the homological algebraic results needed in order to obtain our main results which are proved in Section \ref{sec:amenability-to-dim-flat} and \ref{sec:back-again}.

\vspace{0.3cm}
\paragraph{\emph{Acknowledgements.}}
The authors would like to thank Ryszard Nest and Andreas Thom for valuable comments and  conversations revolving around L{\"u}ck's amenability conjecture. 

\vspace{0.3cm}
\paragraph{\emph{Notation.}}
Throughout the paper, all generic von Neumann algebras are assumed to be finite and have separable predual and, unless explicitly specified otherwise, $\tau$ will denote a fixed normal, faithful tracial state on the von Neumann algebra in question. Moreover, all generic discrete groups are implicitly assumed to be countable and all groupoids appearing will be assumed discrete and measured.
We denote the unit, either in a group or an algebra, by $\bbb$ and the indicator function on a set $F$ by $\bbb_F$.
For a function $f\colon X\rightarrow E$ into a vector space we denote $\supp f := \{ x\in X \mid f(x)\neq 0\}$, i.e. we do not automatically take the closure, even if $X$ might be a topological space. Finally, we will need to distinguish between algebraic and von Neumann algebraic crossed products;  the symbol ``$\rtimes$'' will therefore be used to denote the former while ``$\rtimescom$'' will denote the latter.

\section{Group actions and groupoids} \label{sec:groupoids}

Suppose that $\Gamma$ is a countable discrete group acting essentially freely and measure preservingly  on a standard diffuse (i.e.~without atoms) probability space $(X,\mu)$. Recall that the freeness assumption means that for every $\gamma \in \Gamma \setminus \{ e \}$ we have $\mu(\{x\in X \mid \gamma.x=x \} )=0$, and that preservation of the measure means that the push-forward measure $\gamma_{*}\mu$ equals $\mu$ for all $\gamma\in \Gamma$. The action of $\Gamma$ on $X$ defines a standard, measure-preserving equivalence relation $\mathcal{R} \subseteq X\times X$ by setting $x\sim_{\mathcal{R}} y$  if  there exists $\gamma \in \Gamma $ such that $y=\gamma.x$.
We may think of the relation $\mathcal{R}$ as a groupoid, called the translation groupoid of the action, and when doing so we often denote it by $\mathscr{G}$ instead of $\mathcal{R}$. The object space is $\mathscr{G}^{0} = X$, there is an arrow $(y,x)_{\mathscr{G}}$, where we often leave out the subscript ``$\mathscr G$'', from $x$ to $y$ exactly when they are related in $\mathcal{R}$, and the composition of arrows is given by $(z,y)\circ (y,x)=(z,x)$. On the relation we have the two natural projection maps $\textrm{pr}_i\colon X\times X \rightarrow X$ and these are exactly the target map $t=\textrm{pr}_1$ and source map $s=\textrm{pr}_2$ on $\mathscr{G}$. Recall \cite{FM1, sauer-betti-of-groupoids} that $s$ and $t$ give rise to a  groupoid measure $\nu$ on $\mathscr{G}$ by setting

\begin{equation}
\nu(A) = \int_X \#  (t^{-1}(y)\cap A) \; \mathrm{d}\mu(y) = \int_X \# (s^{-1}(x)\cap A) \; \mathrm{d}\mu(x); \nonumber
\end{equation}
the two integrals being equal because $\mathcal{R}$ preserves $\mu$.
Recall from \cite{sauer-betti-of-groupoids} that the \emph{groupoid ring} $\mathbb{C}\mathscr{G}$ of $\mathscr{G}$ is the subspace of  $L^{\infty}(\mathscr{G},\nu)$ consisting of (classes of) functions $f$ such that the functions $x\mapsto \# \{ \alpha \in t^{-1}(x) \mid f(\alpha) \neq 0 \}$ and $x\mapsto \# \{ \alpha \in s^{-1}(x) \mid f(\alpha) \neq 0\}$ are both essentially bounded on $X$. On $\CC\G$ we consider the \emph{convolution product} $(f*g)(\gamma) = \sum_{\alpha,\beta\in \mathscr{G}:\gamma=\alpha \beta} f(\alpha) g(\beta)$;  note that the sum is finite for almost every $\gamma\in \mathscr{G}$ so that the definition does in fact make sense. Furthermore, $\CC\G$ is equipped with an involution by setting $f^*(\gamma)=\overline{f(\gamma^{-1})}$ turning it into a unital $*$-algebra. Integration against the measure $\nu$ defines a faithful, positive trace $\tau$ on $\CC\G$ and the corresponding GNS-construction leads to an algebra of bounded operators whose weak closure (which is therefore a finite von Neumann algebra known as the \emph{groupoid von Neumann algebra}) will be denoted by $L\G$ in the following.\\

Our main aim in this section is to introduce a notion of ``the homogenous space'' associated with an inclusion of groupoids and to study its basic properties. Consider therefore a subgroupoid $\mathscr{H}$ of $\mathscr{G}$ with the same object space $\H^{0} = X$. For simplicity of notation we assume that $\mathscr{H}$, whence also $\mathscr{G}$, has infinite orbits on $X$ and furthermore that $[\mathscr{G}:\mathscr{H}]=\infty$ almost everywhere, i.e.~that there is no $\mathscr{G}$-invariant set $A\subseteq X$ of non-zero measure such that each $\mathscr{G}$-orbit on $A$ splits into finitely many $\mathscr{H}$-orbits on $A$. 
We also fix isomorphisms of measure spaces
\begin{equation}
\phi_{\mathscr{G}} \colon X\times \mathbb{N} \rightarrow \mathscr{G} \quad \mathrm{and} \quad \phi_{\mathscr{H}} \colon X\times \mathbb{N} \rightarrow \mathscr{H} \nonumber
\end{equation}
such that $t\circ \phi_{*} = \mathrm{pr}_X$ and (mainly for convenience) such that $\phi_*((x,1)) = (x,x)_{\mathscr{*}}=\id_x$. The existence of $\phi_*$  is implicit in \cite[Lemma 3.2]{SauerThom}, where a full proof is given in the ergodic case.  In the case where $\mathscr{G}$ is the translation groupoid of a free action we may in fact take the domain of $\phi_{\mathscr{G}}$ to be $X\times \Gamma$ and $\phi(x,\gamma) = (x,\gamma^{-1}.x)_{\mathscr{G}}$.  Now define
\begin{equation}
E_1 = \cup_{j\in \mathbb{N}} E_{1,j} \ \mathrm{ where} \quad E_{1,j} := \supp \left( \bbb_{\phi_{\mathscr{G}}(X\times \{1\})} * \bbb_{\phi_{\mathscr{H}}(X\times \{j\})} \right). \nonumber
\end{equation}
Assuming $E_1, \dots , E_{i-1}$ defined as $E_*:=\cup_{j\in \mathbb{N}} E_{*,j}$ we define, recursively, sets $E_{i,*}\subseteq \mathscr{G}$ as follows:
For each $x\in X$ denote by $n_i(x)$ the smallest $n_i(x)\in \mathbb{N}$ such that $(x,n_i(x))\notin E_1\cup \cdots \cup E_{i-1}$. Denote by $S_i$ the graph $\{ (x,n_i(x))\mid x\in X\} \subseteq X\times \mathbb{N}$ and put
\begin{equation}
E_i = \cup_{j\in \mathbb{N}} E_{i,j} \quad \mathrm{where} \quad E_{i,j} = \supp \left( \bbb_{\phi_{\mathscr{G}}(S_i)} *\bbb_{\phi_{\mathscr{H}}(X\times \{j\}}\right). \nonumber
\end{equation}
Then the sets $E_i^{x} := t^{-1}(x)\cap E_i$ are precisely the pointwise orbits of right-multiplication by $\mathscr{H}$ on $\mathscr{G}$, and the maps $x\mapsto n_i(x)$ provides a measurable choice of representatives. For notational convenience, we also denote the arrow $\phi_{\mathscr{G}}(x,n_i(x))$ by $\alpha_{i,x}$. Notice also that the sets $E_{i,j}$ are pairwise disjoint with $\nu(E_{i,j})=1$ and $\mathscr{G} = \cup_{i,j} E_{i,j}$. 
\begin{remark}
The construction of the sets $E_i$ might seem technical at first glance but the underlying idea is quite simple. The set $E_1$ simply consists of the arrows in $\H$. To construct $E_2$, we choose for every point $x\in X$ the first (relative to the chosen numbering $\phi_\G$) arrow in $ \G\setminus \H$ with target $x$. This is then $\alpha_{2,x}$ and $E_2$ then consists of all arrows that can be obtained by composing the $\alpha_{2,x}$'s from the right with arrows from $\H$. The set $E_3$ is then constructed by choosing, for each $x\in X$, the first arrow in $\G\setminus (E_1\cup E_2)$ with target $x$. This is the arrow denoted $\alpha_{3,x}$ and the set $E_3$ consists of all the arrows that can be obtained by  multiplying the $\alpha_{3,x}$'s from the right with arrows from $\H$. 
Note also that the set $E_{i,1}$ simply consists of the collection $(\alpha_{i,x})_{x\in X}$ and that $E_{ij}$ consists of the arrows obtained by composing $\alpha_{i,x}$'s from the right with the $j$'th arrow from $\H$ with target $s(\alpha_{i,x})$.
\end{remark}

We now define the quotient space as $\left( \mathscr{G}/\mathscr{H}\right)^{\phi_*}:= \cup_{i\in \mathbb{N}} E_{i,1}$ with the Borel structure inherited from $\mathscr{G}$. Notice that this is actually independent of the choice of $\phi_\H$ and $\phi_\G$, in the sense that any other choices would give a canonically isomorphic space  ---  hence we drop the superscript on $\G/\H$ in the sequel. Next we want to study certain modules of functions on $\G/\H$. To this end, consider the sets
\begin{align*}
\CC[\G]_t &=\{ f\in L^\infty(\G,\nu) \mid  x\mapsto \#t^{-1}(x)\cap \supp(f) \text{ is essentially bounded on $X$}   \};\\
\CC[\G/\H]_t &= \{ f\in L^\infty(\G/\H,\nu) \mid  x\mapsto \#t^{-1}(x)\cap \supp(f) \text{ is essentially bounded on $X$}   \}.
\end{align*}
Note that $\CC[\G]_t$ is a left $\CC[\G]$-module for the natural convolution action.
We may define a map $\kappa_{\mathscr{H}}^{\mathscr{G}} \colon \mathbb{C}\mathscr{G}_t \rightarrow \mathbb{C}\mathscr{G}_t$ by setting
\begin{equation}
\kappa_{\mathscr{H}}^{\mathscr{G}}(f)(\alpha) = \left\{ \begin{array}{cl} \sum_{\beta\in E_i^x} f(\beta) & ,\; \textrm{if} \;\exists x\in X ,i\in \mathbb{N} : \alpha=\alpha_{i,x}; \\ 0 &, \;\textrm{if not.}\end{array} \right. \nonumber
\end{equation}
We note that the range of $\kappa_\H^\G$ is exactly $\CC[\G/\H]_t$ and define $\mathbb{C}\left[ \mathscr{G}/\mathscr{H}\right] := \kappa_{\mathscr{H}}^{\mathscr{G}}\left( \mathbb{C}\mathscr{G}\right)$. 

\begin{prop-defi}\label{prop-defi-ting}
The space  $\mathbb{C}\left[ \mathscr{G}/\mathscr{H}\right]_t$ is endowed with the structure of a left $\mathbb{C}\mathscr{G}$-module by setting
\begin{equation}
f.\xi :=\kappa_{\mathscr{H}}^{\mathscr{G}}(f*\xi) \ \text{ for } f\in \mathbb{C}\mathscr{G} \text{ and } \xi \in \mathbb{C}\left[ \mathscr{G}/\mathscr{H}\right]_t. \label{eq:CGmodstructure}
\end{equation}
The subset $\CC[\G/\H]$ is a submodule for this structure and the map $\kappa_{\mathscr{H}}^{\mathscr{G}}$ is a $\mathbb{C}\mathscr{G}$-map of $\mathbb{C}\mathscr{G}_t$ onto $\mathbb{C}\left[ \mathscr{G}/\mathscr{H}\right]_t$ mapping $\CC[\G]$ onto $ \CC[\G/\H].$
\end{prop-defi}

\begin{proof}
To show that equation (\ref{eq:CGmodstructure}) does indeed define a module structure it is sufficient to show that $\kappa_{\mathscr{H}}^{\mathscr{G}}(f*g) = \kappa_{\mathscr{H}}^{\mathscr{G}} \left( f*\kappa_{\mathscr{H}}^{\mathscr{G}}(g)\right)$ for all $f\in \CC[\G]$ and $g\in \mathbb{C}\mathscr{G}_t$. Expanding this, we need to show that for all $i\in \mathbb{N}$ and almost every $x\in X$ we have
\begin{equation}
\sum_{\alpha \in E_i^{x}}(f*g)(\alpha) = \sum_{\alpha\in E_i^x} (f*\kappa_{\mathscr{H}}^{\mathscr{G}}(g))(\alpha). \label{kappa-eq}
\end{equation}
Computing the left-hand side of \eqref{kappa-eq} we get
\begin{equation}
\sum_{\alpha \in E_i^{x}}(f*g)(\alpha) = \sum_{\alpha \in E_i^x} \sum_{\beta\in t^{-1}(x)} f(\beta)g(\beta^{-1}\alpha) = \sum_{\beta\in t^{-1}(x)} f(\beta) \left( \sum_{\alpha\in E_i^x} g(\beta^{-1}\alpha)\right), \nonumber
\end{equation}
and the right hand side of \eqref{kappa-eq} expands as
\begin{equation}
\sum_{\alpha\in E_i^x} (f*\kappa_{\mathscr{H}}^{\mathscr{G}}(g))(\alpha) = \sum_{\beta\in t^{-1}(x)} f(\beta) \left( \sum_{\alpha\in E_i^{x}} (\kappa_{\mathscr{H}}^{\mathscr{G}}g)(\beta^{-1}\alpha) \right). \nonumber
\end{equation}
For fixed $\beta$ there exists a unique $j\in \NN$ such that $\alpha_{j,s(\beta)}\in \beta^{-1}E_i^x$ and since $\kappa_{\H}^{\G}(g)$ is only supported in the representatives we have 
\[
\sum_{\alpha\in E_i^{x}} \kappa_{\mathscr{H}}^{\mathscr{G}}(g)(\beta^{-1}\alpha)=
\kappa_{\H}^{\G}(g)(\alpha_{j,s(\beta)})
= \sum_{\gamma\in E_j^{s(\beta)}}g(\gamma)=\sum_{\alpha\in E_i^x}g(\beta^{-1}\alpha),
\]
The remaining statements follow directly from the definitions.

\end{proof}

\begin{remark} \label{rmk:augmentationmap}
Removing the assumption that $\mathscr{H}$ has infinite index in $\mathscr{G}$ we can write $X$ as a disjoint union of $\mathscr{G}$-invariant sets $X^{(n)}, n\in \mathbb{N}\cup \{ \infty \}$ such that for all $n$ we have $[\mathscr{G}\vert_{ X^{(n)}}:\mathscr{H}\vert_{ X^{(n)}}] = n$. Then we may proceed as above on each of the pieces $X^{(n)}$, getting sets $E_{i,j}^{(n)}$ and defining a factor map $\kappa_{\mathscr{H}}^{\mathscr{G}}$ and $\mathbb{C}\left[ \mathscr{G}/\mathscr{H}\right]$.
Note also that if we take $\mathscr{H}=\mathscr{G}$ we get $\mathbb{C}\left[ \mathscr{G}/\mathscr{H}\right] \simeq L^{\infty}(X)$ and that $\kappa_{\mathscr{G}}^{\mathscr{G}}$ is the usual augmentation map. Thus in this case the $\mathbb{C}\mathscr{G}$-module structure coincides with the one considered in \cite{sauer-betti-of-groupoids}.
\end{remark}
Below we will also need a further subdivision of the $E_{i,1}$. Namely, noting that the projection maps $s$ and $t$ are countable-to-one we can  partition each $E_{i,1}$  into sets on which both the target and the source map are injective (see e.g.~\cite[Lemma 3.1]{sauer-betti-of-groupoids}.) That is, we can find subsets $I_i\subseteq \NN$ such that $E_{i,1} = \sqcup_{l\in I_i} E_{(i,l),1}$, $\nu(E_{(i,l),1})>0$ and such that $s$ and $t$ are both injective when restricted to each $E_{(i,l),1}$. For $x\in t(E_{(i,l),1})$ we denote by $\alpha_{(i,l),x}$ the unique arrow $\alpha$ in $E_{(i,l),1}$ with $x=t(\alpha)$ and we denote by $\alpha_{(i,l)}$ the partial isomorphism of $X$ given by the collection of all these. We furthermore denote by $E_{(i,l),j}$ the support of $\bbb_{E_{(i,l),1}}*\bbb_{\phi_{\mathscr{H}}(X\times \{j\})}$ and observe that the source and target maps are still injective when restricted to the $E_{(i,l),j}$.\\

For an inclusion $H\leq G$ of groups, it is well-known that the group ring $\mathbb{C}G$ is generated as a right-$\mathbb{C}H$-module by a set of representatives for the cosets in $G/H$. We now fix the last bit of notation and prove the groupoid analogue of this result.
\begin{definition}
An element $f\in \mathbb{C}\mathscr{G}$ is said to be on $\mathscr{H}$-reduced form if there exists a finite set $J\subseteq \sqcup_{i\in \mathbb{N}} I_i$ and $(f_{i,l})_{(i,l)\in J}\subseteq  \mathbb{C}\mathscr{H}$ such that
\begin{equation} \label{eq:Hreduced}
f= \sum_{(i,l)\in J} \bbb_{E_{(i,l),1}} * f_{i,l} \quad \text{ and } \quad t(\supp(f_{i,l}))\subseteq s(E_{(i,l),1}) \text{ for all } (i,l)\in J. 
\end{equation}
\end{definition}
The condition on the $f_{i,l}$'s just means that we have not trivially extended their support. In particular, for an $f$ on $\mathscr{H}$-reduced form we have  that $f=0$ if and only if all the $f_{i,l}=0$. To see this, first note that summands in \eqref{eq:Hreduced} have disjoint support, so $f$ is zero if an only if $\bbb_{E_{(i,l),1}}\ast f_{i,l}=0$ for every $(i,l)\in J$. Furthermore, if $f_{i,l}$ is non-zero on a set $F\subseteq \H$ of positive measure then the targets of this set is contained in $s(E_{(i,l),1})$, and hence for each $\beta\in F$ there exists a unique $\alpha\in E_{(i,l),1}$ for which the product $\alpha\beta$ is defined; this product is then in the support of $\bbb_{E_{(i,l),1}}\ast f_{i,l}$ which therefore has positive measure as well.

\begin{lemma}[Decomposition] \label{lma:decomposition}
Let $f\in \mathbb{C}\mathscr{G}_t$ and $0<\varepsilon \leq 1$. Then there exists a set $Y\subseteq X$ with $\mu(Y)\geq 1-\varepsilon$ such that $\bbb_{Y}*f$ is on $\mathscr{H}$-reduced form.
\end{lemma}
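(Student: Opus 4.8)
The plan is to realize $\bbb_Y*f$ as the restriction of $f$ to arrows with target in $Y$ and then to read the coordinates $f_{i,l}$ directly off the coset decomposition $\G=\sqcup_{i,l,j}E_{(i,l),j}$. First I would record that, since $\bbb_Y$ is supported on the identities $\{\id_y:y\in Y\}$, one has $(\bbb_Y*f)(\gamma)=\bbb_Y(t(\gamma))f(\gamma)$, so $\supp(\bbb_Y*f)=\supp(f)\cap t^{-1}(Y)$. For each index $(i,l)$ I would set $f_{i,l}:=\bbb_{E_{(i,l),1}^{-1}}*(\bbb_Y*f)$, where $E_{(i,l),1}^{-1}$ denotes the inverse arrows; using that $s$ and $t$ are injective on $E_{(i,l),1}$ one checks that $f_{i,l}(h)=(\bbb_Y*f)(\alpha_{(i,l),t(h)}\,h)$ for $h\in\H$ with $t(h)\in s(E_{(i,l),1})$ and $f_{i,l}(h)=0$ otherwise, that $f_{i,l}$ is supported on $\H$ with $t(\supp f_{i,l})\subseteq s(E_{(i,l),1})$, and that $\bbb_{E_{(i,l),1}}*f_{i,l}$ equals $\bbb_Y*f$ on the $(i,l)$-part $\sqcup_j E_{(i,l),j}$ and vanishes elsewhere (here the partial-isomorphism identity $\bbb_{E_{(i,l),1}}*\bbb_{E_{(i,l),1}^{-1}}=\bbb_{t(E_{(i,l),1})}$ is used, together with the fact that $f_{i,l}$ lives on $\H$). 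Summing over a finite set $J$ of indices then yields $\bbb_Y*f=\sum_{(i,l)\in J}\bbb_{E_{(i,l),1}}*f_{i,l}$ as soon as $\supp(\bbb_Y*f)$ meets only the $(i,l)$-parts with $(i,l)\in J$.

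It remains to choose $Y$ so that (a) only finitely many cosets are met and (b) each $f_{i,l}$ actually lies in $\CC\H$, i.e.\ has both source- and target-fibres essentially bounded. Write $N$ for the essential supremum of $x\mapsto\#(t^{-1}(x)\cap\supp f)$, which is finite since $f\in\CC[\G]_t$. For (a) I would let $B_{i,l}\subseteq X$ be the set of targets $x$ at which the $(i,l)$-part of $\supp f$ is non-empty; since at most $N$ distinct cosets can meet $\supp f$ over a fixed $x$, one has $\sum_{(i,l)}\bbb_{B_{i,l}}\le N$ pointwise, hence $\sum_{(i,l)}\mu(B_{i,l})\le N$, and a finite $J$ can be chosen with $\sum_{(i,l)\notin J}\mu(B_{i,l})<\varepsilon/2$. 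For (b) I would use that the source-fibre function $g(x):=\#(s^{-1}(x)\cap\supp f)$ is $\mu$-integrable, because $\int_X g\,\d\mu=\nu(\supp f)=\int_X\#(t^{-1}(x)\cap\supp f)\,\d\mu\le N$; choosing $M$ with $\int_{\{g>M\}}g\,\d\mu<\varepsilon/2$ and deleting from $X$ the target set $t(\supp f\cap s^{-1}(\{g>M\}))$ of the ``heavy'' arrows --- a set of measure at most $\int_{\{g>M\}}g\,\d\mu$ by the inequality $\mu(t(A))\le\nu(A)$ --- produces a set on which every source-fibre of the truncation is at most $M$.

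Taking $Y$ to be the intersection of $X\setminus\bigcup_{(i,l)\notin J}B_{i,l}$ with this second set gives $\mu(Y)\ge 1-\varepsilon$, confines $\supp(\bbb_Y*f)$ to the $J$-parts, and forces every source-fibre of $\bbb_Y*f$, hence of each $f_{i,l}$ (the map $h\mapsto\alpha_{(i,l),t(h)}h$ being injective into $\supp(\bbb_Y*f)$), to be at most $M$; combined with the automatic target-fibre bound $N$ this places each $f_{i,l}$ in $\CC\H$ and finishes the verification of the reduced form. The genuinely delicate point, and the one I expect to be the main obstacle, is exactly the source-fibre bound in (b): passing from $f\in\CC[\G]_t$, which controls only target-fibres, to coordinates $f_{i,l}\in\CC\H$, which must have bounded fibres in \emph{both} directions, and seeing that this costs only $\varepsilon$ in measure by discarding the targets of the integrably-many heavy arrows. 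The remaining items (measurability of $B_{i,l}$, $g$ and $Y$ via the countable-to-one projections, and the partial-isomorphism identities) are routine.
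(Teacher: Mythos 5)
Your argument is correct, and its skeleton is the same as the paper's: both proofs exploit $\nu(\supp f)<\infty$ together with the inequality $\mu(t(A))\leq\nu(A)$ to delete an $\varepsilon$-small set of targets and thereby confine $\supp(\bbb_Y*f)$ to finitely many pieces of the decomposition of $\G$, after which the coefficients $f_{i,l}$ are read off. The difference lies in the bookkeeping. The paper truncates over the \emph{finer} partition $\{E_{(i,l),j}\}$, keeping only a finite set $D$ of pairs $((i,l),j)$; each $f_{i,l}$ is then by construction supported in finitely many slices $\phi_{\H}(X\times\{j\})$, which gives its target-fibre bound at once, while the source-fibre bound is left implicit in the choice of $\phi_{\H}$. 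You truncate only over the coset index $(i,l)$ and instead verify $f_{i,l}\in\CC\H$ directly: the target-fibre bound is transported from $\bbb_Y*f$ along the bijection $h\mapsto\alpha h$ (with $\alpha$ the arrow of $E_{(i,l),1}$ composable with $h$), and the source-fibre bound comes from your Chebyshev-type step discarding the targets of the integrably-many heavy arrows. That step is a genuine addition --- the paper's proof does not address source fibres explicitly --- and it makes the argument insensitive to the particular enumeration $\phi_{\H}$, at the harmless cost of a second $\varepsilon/2$ of measure. Two small points to tidy: $\bbb_{E_{(i,l),1}^{-1}}*(\bbb_Y*f)$ is a priori a function on all of $\G$ and need not vanish off $\H$, so $f_{i,l}$ should be defined as its \emph{restriction} to $\H$ (this is exactly the slice map $\varphi_{i,l}$ of Lemma \ref{slice-maps-lem}); and the arrow appearing in your formula is the unique element of $E_{(i,l),1}$ with \emph{source} equal to $t(h)$, whereas the notation $\alpha_{(i,l),x}$ is reserved in the paper for the arrow with \emph{target} $x$.
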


An alternative formulation using the dimension function (see e.g.~Section \ref{sec:homalg}) is that the right $\mathbb{C}\mathscr{H}$-span of the indicator functions $\bbb_{E_{(i,l),1}}$ generates a rank dense (i.e.~codimension zero for $\dim_{L^{\infty}(X)}$) submodule of $\mathbb{C}\G_t$.

\begin{proof}
By the definition of $\mathbb{C}\mathscr{G}_t$ we have $\nu (\supp f)<
\infty$. If we put 
\[
F_{(i,l),j} = E_{(i,l),j} \cap \supp(f) \quad \text{ for } i,j\in
\mathbb{N} \text{ and } l\in I_i,
\]
we therefore have $\sum_{(i,l),j} \nu (F_{(i,l),j})< \infty$, so we
may choose a finite set $D\subseteq (\sqcup_{i\in \mathbb{N}}I_i)\times \mathbb{N}$ such that
$\sum_{((i,l),j)\notin D} \nu(F_{(i,l),j}) < \varepsilon$. As

\[
\mu(t(F_{(i,l),j}))= \int_{X} \bbb_{t(F_{(i,l),j})}(x)\; \mathrm{d}\mu(x)\leq \int_{X} \# (t^{-1}(x)\cap F_{(i,l),j} ) \; \mathrm{d}\mu(x)=\nu(F_{(i,l),j}),
\]
also $\sum_{((i,l),j)\notin D} \mu (t(F_{(i,l),j})) < \varepsilon$ and  we now choose $Y= \left(
\cup_{((i,l),j)\notin D} t(F_{(i,l),j}) \right)^{\complement}$.
Then $\supp (\bbb_Y * f) \subseteq \cup_{((i,l),j)\in D} E_{(i,l),j}$ and since $s$ is injective on $E_{(i,l),j}$ 
there exists $f_{(i,l),j}^0\in L^{\infty}(X), \; ((i,l),j)\in D$, such that
\begin{eqnarray}
\bbb_Y*f & = & \sum_{(i,l),j\in D} \bbb_{E_{(i,l),j}} \cdot f_{(i,l),j}^0 \nonumber \\
 &=& \sum_{(i,l),j\in D} \bbb_{E_{(i,l),1}} \ast \bbb_{\phi_{\H}(X\times\{j\})} \cdot f_{(i,l),j}^0 \nonumber \\
 & = & \sum_{(i,l)} \bbb_{E_{(i,l),1}} * \left( \bbb_{s(E_{(i,l),1})} \ast \sum_{j} 
\bbb_{\phi_{\mathscr{H}}(X\times \{j\})} \cdot f_{(i,l),j}^0 \right). \nonumber
\end{eqnarray}
The functions $f_{i,l}:= \bbb_{s(E_{(i,l),1})}\ast \sum_{j} 
\bbb_{\phi_{\mathscr{H}}(X\times \{j\})} \cdot f_{(i,l),j}^0 $ will therefore now do the job.
\end{proof}

We single out the following consequence of the proof, which does not use the existence of $\mathscr{H}$ at all.

\begin{por}
For every $f\in \mathbb{C}\mathscr{G}_t$ and every $0<\varepsilon <1$ there is a $Y\subseteq X$ with $\mu(Y) \geq 1-\varepsilon$ such that $\bbb_Y*f \in \mathbb{C}\mathscr{G}$.
\end{por}

\section{Homological algebra in the presence of a dimension  function} \label{sec:homalg}

In this section we study certain basic homological algebraic concepts, in particular flatness properties, replacing the usual notion of exactness with a weaker notion arising from the dimension  function associated with a finite von Neumann algebra. We remind the reader that all generic von Neumann algebras are assumed to be finite and have separable predual and furthermore to come equipped with a fixed faithful, normal, tracial state denoted by $\tau$.

\subsection{L{\"u}ck's dimension  function}

Let $N$ be a tracial von Neumann algebra with a fixed faithful, normal, tracial state $\tau$. Recall that L{\"u}ck's dimension function $\dim_{N}$ assigns to each $N$-module $L$ an extended positive real number 
\begin{equation}
\dim_{N}L := \sup\{ \dim_{N}P \mid P\subseteq L \; \textrm{finitely generated and projective submodule} \}, \nonumber
\end{equation}
where $\dim_N P$ is the usual von Neumann dimension of the projective module $P$. 
For more details we refer to the monograph \cite{Luck02};  recall, in particular, the many nice properties listed in \cite[Theorem 6.7]{Luck02}. A key technical observation that will be used repeatedly in the sequel, often referred to as \lq Sauer's local criterion\rq, provides a very nice characterization of zero-dimensional modules over $N$. We recall it here for the readers convenience.

\begin{theorem}[{\cite[Theorem 2.4]{sauer-betti-of-groupoids}}]\label{sauers-local-criterion}
Let $N$ be a tracial von Neumann algebra and let $L$ be an $N$-module. Then $\dim_{N}L=0$ if and only if for every $x\in L$ there exists a sequence of projections $p_n\in N$ such that $\lim_n\tau(p_n)=1$ and $p_n.x=0$ for all $n$.
\end{theorem}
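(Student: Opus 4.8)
The statement is the standard ``Sauer's local criterion'', and the plan is to reconstruct the two implications directly from the basic properties of $\dim_N$ recorded in \cite[Theorem 6.7]{Luck02}: monotonicity under submodules, monotonicity under quotients, the cofinality formula $\dim_N L = \sup\{\dim_N M \mid M\subseteq L \text{ finitely generated}\}$, and the fact that the finitely generated projective module $Np$ attached to a projection $p\in N$ has dimension $\tau(p)$.

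For the implication ``$\dim_N L = 0 \Rightarrow$ local condition'', fix $x\in L$ and consider the cyclic submodule $Nx\cong N/I$, where $I=\{a\in N \mid a.x=0\}$ is a left ideal; monotonicity gives $\dim_N(N/I)=0$. The key observation is that the projections contained in $I$ are closed under finite suprema: for projections $p,q\in I$ one has $p+q\in I$, and the range projection satisfies $p\vee q = h(p+q)(p+q)$ for a suitable bounded Borel function $h$, whence $p\vee q\in N\cdot I\subseteq I$. Letting $e_0$ denote the supremum of all projections in $I$, normality of $\tau$ gives $\tau(e_0)=\sup\{\tau(p)\mid p\in I \text{ a projection}\}$. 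Applying the same functional-calculus trick to $a^*a\in I$ shows that every spectral projection $\bbb_{(\varepsilon,\infty)}(a^*a)$ lies in $I$ and is therefore dominated by $e_0$; letting $\varepsilon\to 0$ yields $a(\bbb - e_0)=0$ for every $a\in I$, so that $I\subseteq Ne_0$. Consequently $\dim_N(N/I)\geq \dim_N(N/Ne_0)=\tau(\bbb-e_0)$, and $\dim_N(N/I)=0$ forces $\tau(e_0)=1$. Thus I may select projections $p_n\in I$ with $\tau(p_n)\to 1$, and these satisfy $p_n.x=0$ as required.

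For the converse it suffices, by the cofinality formula, to show that every finitely generated submodule $M=Nx_1+\cdots+Nx_k$ has $\dim_N M=0$. Choosing projections $p_n^{(i)}$ with $\tau(p_n^{(i)})\to 1$ and $p_n^{(i)}.x_i=0$, I would set $q_n=\bigwedge_{i=1}^k p_n^{(i)}$; then $\tau(q_n)\to 1$ (since $\tau(\bbb-q_n)\leq \sum_i(1-\tau(p_n^{(i)}))$) and $q_n.x_i=0$ for every $i$, so $x_i=(\bbb-q_n).x_i$. Hence the surjection $N^k\to M$ factors through $(N(\bbb-q_n))^k$ via right multiplication by $\bbb-q_n$, exhibiting $M$ as a quotient of a finitely generated projective module of dimension $k\,\tau(\bbb-q_n)$. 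Therefore $\dim_N M\leq k(1-\tau(q_n))\to 0$, giving $\dim_N M=0$.

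The main obstacle is the first implication, specifically the passage from the dimension-theoretic hypothesis $\dim_N(N/I)=0$ to the existence of genuine projections in the \emph{algebraic} annihilator $I$ whose traces tend to $1$. The crucial point making this work is that $I$, being a left ideal, absorbs the bounded functional calculus of its positive elements, so that all relevant spectral (range) projections automatically lie in $I$; once this is in hand, everything else reduces to routine bookkeeping with the listed properties of $\dim_N$.
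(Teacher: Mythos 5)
The paper itself offers no proof of this statement---it is quoted directly from Sauer---so I am comparing your argument against the standard one. Your converse direction (local condition $\Rightarrow\dim_NL=0$) is correct and is exactly the usual argument. The forward direction follows the standard architecture but contains one genuinely false step: the claim that for projections $p,q$ in the annihilator ideal $I$ one has $p\vee q=h(p+q)(p+q)$ for a \emph{bounded} Borel function $h$, and hence that the projections in $I$ are closed under finite suprema. Since $p\vee q$ is the range projection $\bbb_{(0,\infty)}(p+q)$, such an $h$ would have to equal $t^{-1}$ on $\operatorname{spec}(p+q)\setminus\{0\}$, and in a diffuse von Neumann algebra the spectrum of $p+q$ can accumulate at $0$ (in $M_2(\CC)\otimes L^\infty[0,1]$ take $p$ a constant line and $q(t)$ the line at angle $\theta(t)\to0$, $\theta(t)\neq0$: then $p\vee q=\bbb$ but $\bbb\notin Np+Nq$, since solving $\bbb=ap+bq$ pointwise forces unbounded coefficients). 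So $p\vee q$ need not lie in the algebraic ideal $I$, the family of projections in $I$ need not be upward directed, and you cannot invoke normality of $\tau$ to conclude $\tau(e_0)=\sup\{\tau(p)\mid p\in I\}$: for a non-directed family the trace of the supremum can strictly exceed the supremum of the traces (two rank-one projections in $M_2(\CC)$ already show this).

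The gap is local and repairable with the device you already use correctly in the next step. For $a\in I$ with $a\geq0$ only the truncated spectral projections $\bbb_{[\varepsilon,\infty)}(a)=h_\varepsilon(a)\,a\in Na\subseteq I$ (where $h_\varepsilon(t)=t^{-1}\bbb_{[\varepsilon,\infty)}(t)$ is bounded by $\varepsilon^{-1}$) are guaranteed to lie in $I$; the full support projection $\bbb_{(0,\infty)}(a)$ is only their increasing limit. Replace the family of projections in $I$ by the family $\{\bbb_{(0,\infty)}(a)\mid a\in I,\ a\geq0\}$. This family \emph{is} upward directed, because $\bbb_{(0,\infty)}(a)\vee\bbb_{(0,\infty)}(b)=\bbb_{(0,\infty)}(a+b)$ and $a+b\in I$; its supremum is your $e_0$; and normality applied twice gives
\[
\tau(e_0)=\sup_{a}\tau\bigl(\bbb_{(0,\infty)}(a)\bigr)=\sup_{a}\sup_{\varepsilon>0}\tau\bigl(\bbb_{[\varepsilon,\infty)}(a)\bigr)=\sup\{\tau(p)\mid p\in I\ \text{a projection}\}.
\]
With this substitution the remainder of your argument ($I\subseteq Ne_0$ via the spectral projections of $a^*a$, hence $0=\dim_N(N/I)\geq 1-\tau(e_0)$, hence $\tau(e_0)=1$) is correct and produces the required projections $p_n\in I$ with $\tau(p_n)\to1$.
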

Note that if $p_n\in N$ is a sequence such that $\tau(p_n)\to 1$ and $p_nx=0$, then there exists a sequence of projections $p_n'\in N$ increasing to $\bbb$ and such that $p_n'x=0$. In particular, zero-dimensionality is a property that is independent of the choice of trace state on $N$.

\subsection{Dimension flat basis change}
In this section we develop some of the basic properties of homological algebra ``relative to'' a finite von Neumann algebra. The main technical result obtained is a version of the well-known flat base change formula \cite[Proposition 3.2.9]{Weibel} in this setting.

\begin{definition}
Let $R$ be a unital ring containing a finite von Neumann algebra $M$.

\begin{enumerate}
\item[(i)] We say that a complex 
\begin{equation}
\dots \xrightarrow{d_{i+2}} P_{i+1} \xrightarrow{d_{i+1}} P_i \xrightarrow{d_{i\phantom{+0}}} P_{i-1} \xrightarrow{d_{i-1}} \dots \nonumber
\end{equation}
of $R$-modules is $\dim_{M}$-exact if its homology in each degree has $M$-dimension zero, i.e.~if $\dim_{M} (\ker d_{i-1} / d_i(P_{i+1}))=0$ for all $i$.\\
\item[(ii)] We say that $(P_i,d_i)_{i\in \mathbb{N}_0}$ is a projective $\dim_{M}$-resolution of an $R$-module $L$ if the $P_i$'s are all projective $R$-modules and there exists an $R$-homomorphism $P_0\to L$ such that the augmented the complex $P_*\rightarrow L \rightarrow 0$ is $\dim_{M}$-exact.\\
\end{enumerate}
\end{definition}

The following definition provides us with language to talk about these properties in very general situations.

\begin{definition}
Let $(M_1,\tau_{1})$ and $(M_2,\tau_{2})$ be tracial von Neumann algebras and let $F$ be a functor from the category of $M_1$-modules to the category of $M_2$-modules. We say that $F$ is $\abexact{(M_1,\tau_1)}{(M_2,\tau_2)}$ if the image under $F$ of any short $\dim_{(M_1,\tau_{1})}$-exact sequence of $M_1$ modules is $\dim_{(M_2,\tau_{2})}$-exact.
\end{definition}

Our main focus in the following will be on tensor functors and we therefore adapt the standard language from homological algebra to this setting.

\begin{definition}
Let $M\subseteq N$ be a trace preserving inclusion of finite von Neumann algebras and let $R$ be an intermediate $*$-algebra.
The inclusion $R\subseteq N$ is said to be $\abflat{M}{N}$ if $N\tens_R - $ is $\abexact{M}{N}$. If $M$ and $N$ are clear from the context we will often just refer to the inclusion $R\subseteq N$ as being dimension flat. 
\end{definition}
\begin{remark}
The notion of dimension flatness originates from L{\"u}ck's work in \cite{Luck98} where it is proven that the inclusion $\CC[\Gamma]\subseteq L(\Gamma)$ is $\abflat{\CC}{L(\Gamma)}$ whenever $\Gamma$ is an amenable group. Note also that dimension flatness of an inclusion $R\subseteq N$ is independent of the choice of faithful, normal, tracial state on $N$. This follows from Sauer's local criterion (Theorem \ref{sauers-local-criterion}) and the remarks following it.

\end{remark}

Next we recall from \cite{Thom06a, Thom06b} the notion of rank completion. Given a tracial von Neumann algebra $M$ and an $M$-module $L$ we define \emph{the rank} of an element $\xi \in L$ as 
\[
[\xi] := \inf \{ \tau(p) \mid p\in \text{Proj}(M), p\xi = \xi \}.
\]
This induces a uniform structure on $L$ and the Hausdorff completion, denoted $c_M(L)$, is again an $M$-module. 
Hence there is a canonical map $\mathfrak{c} \colon L\rightarrow c_M(L)$ and this turns out \cite[Theorem 2.7]{Thom06b} to be a $\dim_{M}$-isomorphism; i.e.~the sequence $0\rightarrow L\xrightarrow{\mathfrak{c}} c_M(L) \rightarrow 0$ is $\dim_{M}$-exact. A module $L$ is called rank complete if $\mathfrak{c}$ is an isomorphism; we remark that the dimension function $\dim_M(-)$ is faithful on the category of complete modules.

\begin{defi}\label{dim-compatibel-ring}
Let $M\subseteq N$ be a trace-preserving inclusion of finite von Neumann algebras. An intermediate $*$-algebra $R$ is said to be $M$-\emph{compatible} if for any $R$-module $L$ and any $r\in R$ the action of $r$ on $L$ is Lipschitz with respect to the rank metric arising from $M$.
\end{defi}
The compatibility property is a mild strengthening of the property considered in \cite[Lemma 1.2]{neshveyev-rustad}. The results in \cite{neshveyev-rustad} came to our attention after submission of the present paper, and the theory developed there provides a different approach to some of the results in this section. We provide a self-contained account for the convenience of the reader.

\begin{rem}\label{dim-compatible-example-rem}
Note that if $R$ is $M$-compatible and $f\colon K\to L$ is a homomorphism of $R$-modules then both $c_M(K)$ and $c_M(L)$ are naturally $R$-modules and $f$ is a contraction with respect to the rank-metric and therefore extends continuously to a map $c_M(f)\colon c_M(K)\to c_M(L)$ which is also an $R$-homo\-mor\-phism. We will primarily be interested in the following two situations: Firstly, if $\Gamma\curvearrowright (X,\mu)$ is a free p.m.p.~action then $R=L^\infty(X)\rtimes \Gamma$ is $L^\infty(X)$-compatible as a subalgebra of $L^\infty(X)\rtimescom \Gamma$ \cite[Lemma 4.4]{Thom06b}. Secondly, in the case of a groupoid $\G$ with object space $(X,\mu)$ the groupoid ring $\CC\G\subseteq L\G$ is $L^\infty(X)$-compatible.
This follows from from the fact that $\CC\G$ is spanned (algebraically) as an $L^\infty(X)$-module by partial isometries with range- and source projections in $L^\infty(X)$ \cite[Lemma 3.3]{sauer-betti-of-groupoids}; the details of the argument can be found in the proof of \cite[Lemma 4.8]{sauer-betti-of-groupoids}.

\end{rem}

We record a few more properties of the rank completion that will turn out useful in the sequel. These are slightly more technical versions of \cite[Lemmas 2.6 \& 2.8]{Thom06b}.\\

\begin{lemma} \label{completion-properties}
Let $M\subseteq N$ be a trace-preserving inclusion of finite von Neumann algebras and let $R\subseteq S$ be intermediate $M$-compatible $*$-algebras. Then the following holds. 
\begin{enumerate}
\item[(i)] The functor $c_M(-)$ maps $\dim_M$-exact complexes of $R$-modules to ($\dim_\CC$-) exact complexes of $R$-modules.
\item[(ii)] For any $R$-module $L$ the natural map $\operatorname{id}\otimes \mathfrak{c} \colon S\otimes_{R} L \rightarrow S\otimes_{R} c_M(L)$ is a $\dim_{M}$-isomorphism. In particular when $S=N$ this is a $\dim_N$-isomorphism as well.

\end{enumerate}
\end{lemma}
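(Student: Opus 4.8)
The crucial elementary observation is that, because $M\subseteq R$, every $R$-linear map is automatically well behaved for the rank metric coming from $M$. Concretely, if $f\colon K\to L$ is a homomorphism of $R$-modules and $p\in M$ is a projection with $p\xi=\xi$, then $pf(\xi)=f(p\xi)=f(\xi)$, so $f$ is a rank contraction; if moreover $f$ is injective then $qf(\xi)=f(\xi)$ forces $q\xi=\xi$, so $f$ is in fact a rank \emph{isometry}; and if $f$ is surjective then, given $\eta$ and a projection $p\in M$ with $p\eta=\eta$, any preimage $\xi_0$ produces the preimage $p\xi_0$ of rank at most $\tau(p)$, so $f$ is rank \emph{open}. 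Using completeness of the target, these three facts upgrade to the assertion that $c_M$ is an \emph{exact} functor on the category of ($M$-compatible) $R$-modules: it sends injections to isometric (hence closed-image) injections, surjections to surjections, and is exact in the middle by a one-step approximation that corrects a cycle by a vanishing-rank preimage of its image. Since a zero-dimensional module has all elements of rank zero, $c_M$ annihilates it.

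Granting this, (i) is immediate: an exact additive functor commutes with homology, so for a $\dim_M$-exact complex $C_\bullet$ of $R$-modules one has $H_i(c_M(C_\bullet))\cong c_M(H_i(C_\bullet))$, and the right-hand side vanishes because $\dim_M H_i(C_\bullet)=0$. Hence $c_M(C_\bullet)$ is exact.

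For (ii) I would first reduce the claim to showing that $c_M(\id\otimes\mathfrak c)$ is an isomorphism: since $c_M$ is exact it preserves kernels and cokernels, and a module is zero-dimensional precisely when its $c_M$ vanishes, so invertibility of $c_M(\id\otimes\mathfrak c)$ forces both $\ker(\id\otimes\mathfrak c)$ and $\coker(\id\otimes\mathfrak c)$ to be zero-dimensional, which is exactly the statement that $\id\otimes\mathfrak c$ is a $\dim_M$-isomorphism. To invert $c_M(\id\otimes\mathfrak c)$ I would build an explicit candidate out of the $R$-balanced assignment $S\otimes_R c_M(L)\to c_M(S\otimes_R L)$ given by $s\otimes\xi\mapsto\lim_n \mathfrak c(s\otimes x_n)$, where $x_n\to\xi$ in $c_M(L)$, and then extend it to the completion. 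Here the hypothesis that $S$ is $M$-compatible is decisive: the Lipschitz bound $[s\otimes(x_n-x_m)]\leq C_s\,[x_n-x_m]$ for the left $S$-action on the $S$-module $S\otimes_R L$ is what makes the sequence $\mathfrak c(s\otimes x_n)$ rank-Cauchy and the limit independent of the approximating sequence; the same bound shows that the image of $\id\otimes\mathfrak c$ is rank-dense. Checking that this assignment is biadditive and $R$-balanced, and that its two composites with $c_M(\id\otimes\mathfrak c)$ are the identity on the dense subsets consisting of the classes $\mathfrak c(s\otimes x)$ and $\mathfrak c(s\otimes\mathfrak c(x))$, then produces the desired two-sided inverse. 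Finally, the parenthetical strengthening for $S=N$ is formal: for an $N$-module every $M$-projection is also an $N$-projection, so an element of vanishing $M$-rank has vanishing $N$-rank; thus $\dim_M V=0$ implies $\dim_N V=0$, and a $\dim_M$-isomorphism of $N$-modules is automatically a $\dim_N$-isomorphism.

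The step I would be most careful with is the exactness of $c_M$ in the middle (equivalently the rank-openness and its one-step approximation) and, in (ii), the verification that the candidate inverse is well defined: one must ensure the correction/approximation series actually converge in the rank metric and that $R$-balancedness survives the passage to the completion. Both rest on the interplay between the Lipschitz property encoded in $M$-compatibility and the completeness of the rank metric, rather than on any deeper structure, so once the three rank facts (contraction, isometry, openness) above are in place the remaining arguments are essentially soft.
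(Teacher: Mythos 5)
Your proposal is correct and rests on the same two pillars as the paper's proof --- that $R$-linear maps are automatically rank contractions (and isometries when injective) because $M\subseteq R$, and that $M$-compatibility of $S$ supplies the Lipschitz control needed both to define the reverse map $S\otimes_R c_M(L)\to c_M(S\otimes_R L)$ and to see that the image of $\id\otimes\mathfrak{c}$ is rank dense --- but it packages part (i) genuinely differently. The paper never establishes that $c_M$ is an exact functor; instead it runs a direct diagram chase with Sauer's local criterion to show that $\ker(c_M(g))/\rg(c_M(f))$ is zero-dimensional, and then invokes Thom's theorem that complete modules form an abelian category to conclude that this subquotient vanishes. You instead prove exactness of $c_M$ from scratch via the contraction/isometry/openness trichotomy and then use commutation with homology; this is conceptually cleaner and also streamlines your reduction in (ii) (the kernel and cokernel of $\id\otimes\mathfrak{c}$ are killed by $c_M$, hence zero-dimensional), but the burden shifts onto exactly the two steps you flag as delicate: preservation of surjections requires the iterative lifting argument in the complete source module, and middle-exactness requires that the image of $c_M(\ker g)\hookrightarrow c_M(L)$ be closed, which your isometry observation does supply. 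In effect you are re-proving the relative version of Thom's exactness lemma rather than citing the abelianness of the category of complete modules, which is what the paper's more hands-on, element-wise argument leans on. For (ii) the two routes are essentially the same: the paper builds your map $\tilde f$ functorially (extend $x\mapsto 1\otimes x$ to completions, then multiply), gets $\dim_M$-injectivity from the identity $\tilde f\circ(\id\otimes\mathfrak{c})=\mathfrak{c}$, and proves $\dim_M$-surjectivity with an explicit projection argument that is your rank-density computation in different clothing.
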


Since $M$ is always compatible over itself,  (ii) implies that if $S$ is an $M$-compatible $*$-algebra between $M$ and $N$ then $S$ is also compatible as an $M$-bimodule in the sense of \cite[Definition 4.6]{sauer-betti-of-groupoids}. That is, if $L$ is a zero-dimensional $M$-module then the same is true for $S\tens_M L$. Note that the latter property  is exactly the one studied in \cite[Lemma 1.2]{neshveyev-rustad}. 

\begin{proof} To prove (i), consider a $\dim_M$-exact complex $K\overset{f}{\to} L \overset{g}{\to} Q$ and the commutative diagram
\[
\xymatrix{
K\ar[r]^f \ar[d]_{\mathfrak{c}_K} & L \ar[r]^g \ar[d]_{\mathfrak{c}_L} &Q \ar[d]_{\mathfrak{c}_Q}\\
c_M(K) \ar[r]_{c_M(f)}& c_M(L) \ar[r]_{c_M(g)} & c_M(Q)
}
\]
We then need to prove that $\ker(c_M(g))\subseteq \rg(c_M(f))$. Since the category of complete modules is abelian \cite[Theorem 2.7]{Thom06b} it suffices to prove that $\ker(c_M(g))/\rg(c_M(f))$ has $M$-dimension zero. By Sauer's local criterion, we therefore have to prove that for every $x\in \ker(c_M(g))$ and every $\varps>0$ there exists a projection $p\in M$ such that $\tau(p^\perp)<\varps$ and $px\in \rg(c_M(f))$. First choose $p_1\in M$ with $\tau(p_1^\perp)<\varps/3$ such that $p_1x\in \rg(\mathfrak{c}_L)$ and choose $y\in L$ such that $\mathfrak{c}_L(y)=p_1x$. Then 
\[
\mathfrak{c}_Q g(y)=c_M(g)\mathfrak{c}_L(y)=c_M(g)(p_1x)=p_1c_M(g)(x)=0.
\]
As $\ker(\mathfrak{c}_Q)$ is zero-dimensional there exists $p_2\in M$ such that $\tau(p_2^\perp)<\varps/3$ and $0=p_2 g(y)=g(p_2y)$. By $\dim_M$-exactness of the upper row there exists $p_3\in M$ such that $\tau(p_3^\perp)<\varps/3$ and $p_3p_2y=f(z)$ for some $z\in K$. Putting $p=p_1\wedge p_2\wedge p_3$ we have $\tau(p^\perp)<\varps$ and
\[
px=pp_1x=p\mathfrak{c}_L(y)= p \mathfrak{c}_L(p_3 p_2 y)=p\mathfrak{c}_L(f(z))=c_M(f)(\mathfrak{c}_K(pz)),
\]
and the proof of (i) is complete. To prove (ii), consider the map $f\colon L\to S\otimes_R L$ given by $f(x)=1\tens x$. This is an $M$-linear map so it extends to a map $\bar{f}\colon c_M(L)\to  c_M(S\otimes_R L)$ which is $R$-linear. It therefore induces a map $\id\tens \bar{f}\colon S\tens_R c_M(L) \to S\tens_R c_M(S\tens_R L)$ which after composition with the multiplication map $S\tens_R c_M(S\tens_R L)\to c_M(S\tens_R L)$ yields an $S$-linear map $\tilde{f}\colon S\otimes_R c_M(L)\to c_M(S\tens_R L )$ making the following diagram commutative:
\[
\xymatrix{
S\tens_R c_M(L) \ar[r]^{\tilde{f}} & c_M(S\tens_R L)\\
S\tens_R L \ar[u]^{\id\tens\mathfrak{c}} \ar[ur]_{\mathfrak{c}} & 
}
\]
Since $\mathfrak{c}$ is $\dim_M$-injective the same is true for $\id\tens \mathfrak{c}$. To prove dimension-surjectivity we need the following observation: For every $s\in S$ and every sequence of projections $p_n\in M$ with $\tau(p_n)\to 1$ there exists a sequence of projections $q_n\in M$ with $\tau(q_n) \to 1$ such that $q_ns= q_nsp_n$. This follows easily from the compatibility assumption on $S$. To see this explicitly,
note that since multiplication with  $s$ is Lipschitz there exists a $C>0$ such that
\[
[sp_n^\perp]\leq C[p_n^\perp]=C\tau(p_n^\perp).
\]
Hence there exists $q_n\in M$ with  $\tau(q_n)>1-C\tau(p_n^\perp)-1/n$ and $q_n s p_n^\perp=0$. 
Hence $\tau(q_n)\to 1$ and $q_nsp_n=q_ns$.
Proving dimension-surjectivity of $\id\tens \mathfrak{c}$ is now straight forward: Given $\xi =\sum_{i=1}^m r_i\tens x_i\in S\tens_R\mathfrak{c}_M(L)$ and $\varps>0$ there exist projections $p_1^{(n)},\dots, p_m^{(n)}\in M$ such that $\lim_n\tau(p_i^{(n)})=1$ and  $p_i^{(n)}x_i=\mathfrak{c}(y_i^{(n)})$ for some $y_1^{(n)},\dots, y_m^{(n)}\in L $. By the observation, we can find projections $q_1^{(n)},\dots,q_m^{(n)}\in M$ such that $\lim_n\tau(q_i^{n})=1$ and $q_i^{(n)} r_ip_i^{(n)}=q_i^{(n)} r_i$. Then $q^{(n)}=\wedge_{i=1}^m q_i^{(n)}$ satisfies $\lim_n\tau(q^{(n)})=1$ and
\[
q^{(n)}\xi =\sum_{i=1}^m q^{(n)}r_i\tens x_i =\sum_{i=1}^m q^{(n)}r_i \tens p_i^{(n)}x_i= (\id\tens \mathfrak{c})\left( \sum_{i=1}^m q^{(n)}r_i \tens y_i\right)
\]
Hence $\id\tens\mathfrak{c}$ is dimension-surjective and the proof of (ii) complete.
\end{proof}

\begin{corollary} \label{cor:flatspec}
If $M\subseteq N$ is a trace preserving inclusion of finite von Neumann algebras and $R$ is an intermediate $M$-compatible $*$-algebra then the inclusion $R\subseteq N$ is $\abflat{M}{N}$ if and only if it is $\abflat{\CC}{N}$.
\end{corollary}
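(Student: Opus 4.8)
The plan is to prove the two implications separately, the forward one being essentially formal and the converse resting entirely on the rank-completion machinery of Lemma~\ref{completion-properties}. Throughout I view an $R$-module as an $M$-module by restriction, so that $\dim_M$ makes sense on the source category.

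First I would dispose of the implication $\abflat{M}{N} \Rightarrow \abflat{\CC}{N}$. Here the point is simply that $\dim_\CC$-exactness is nothing but ordinary exactness (a $\CC$-module has dimension zero exactly when it is trivial), while every honestly exact complex of $R$-modules has homology of $M$-dimension zero and is therefore $\dim_M$-exact. Hence a functor that turns $\dim_M$-exact sequences into $\dim_N$-exact ones \emph{a fortiori} turns honestly exact sequences into $\dim_N$-exact ones, which is precisely the assertion $\abflat{\CC}{N}$.

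For the converse $\abflat{\CC}{N} \Rightarrow \abflat{M}{N}$, I would start from a short $\dim_M$-exact sequence $K \xrightarrow{f} L \xrightarrow{g} Q$ of $R$-modules and apply the rank-completion functor $c_M(-)$. Since $R$ is $M$-compatible, Lemma~\ref{completion-properties}(i) guarantees that $c_M(K) \to c_M(L) \to c_M(Q)$ is \emph{honestly} exact as a complex of $R$-modules. Feeding this into the hypothesis $\abflat{\CC}{N}$ shows that $N\tens_R c_M(K) \to N\tens_R c_M(L) \to N\tens_R c_M(Q)$ is $\dim_N$-exact. Naturality of the completion map $\mathfrak{c}$ then yields a commuting ladder from $N\tens_R K \to N\tens_R L \to N\tens_R Q$ down to this $\dim_N$-exact complex, the vertical maps being the $\id\tens\mathfrak{c}$'s, which by Lemma~\ref{completion-properties}(ii) applied with $S=N$ are all $\dim_N$-isomorphisms. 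It therefore remains only to transfer $\dim_N$-exactness from the bottom row up to the top row along a chain map that is a $\dim_N$-isomorphism in every degree.

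This transfer is where the real work lies and is the step I expect to be the main obstacle. The cleanest route is a direct diagram chase via Sauer's local criterion (Theorem~\ref{sauers-local-criterion}), in the spirit of the proof of Lemma~\ref{completion-properties}(i). To establish $\dim_N$-exactness of the top row at the middle term $N\tens_R L$, I would take $x$ in the kernel of the outgoing differential and, for given $\varps>0$, push it down by the middle vertical $\dim_N$-isomorphism into the kernel of the bottom differential; $\dim_N$-exactness of the bottom row then supplies, at the cost of a projection $p_1$ of trace close to $1$, a preimage $y$, which I lift back through the $\dim_N$-surjective left vertical map (losing a projection $p_2$) and compare with a top-row boundary, the discrepancy lying in the kernel of the middle vertical map and hence annihilated by a third projection $p_3$ since that kernel is zero-dimensional. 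Setting $p=p_1\wedge p_2\wedge p_3$ and using that the $N$-module maps are $N$-linear (so they commute with the $p_i$) together with the collapse $pp_i=p$ coming from $p\leq p_i$, one arrives at $px\in\operatorname{im}$ of the top differential with $\tau(p^\perp)<\varps$, exactly as Sauer's criterion demands. The same three-projection chase, run with a zero term adjoined at the appropriate end, handles $\dim_N$-injectivity at $N\tens_R K$ and $\dim_N$-surjectivity at $N\tens_R Q$, which completes the argument.
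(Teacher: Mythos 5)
Your proof is correct and follows essentially the same route as the paper: both directions rest on Lemma~\ref{completion-properties}, with the converse obtained by rank-completing the $\dim_M$-exact sequence, tensoring with $N$, and transferring $\dim_N$-exactness back along the $\dim_N$-isomorphisms $\id\tens\mathfrak{c}$. The paper simply asserts that final transfer (it is the five-lemma for dimension-isomorphisms), whereas you carry out the three-projection chase via Sauer's criterion explicitly; this is a difference of detail, not of method.
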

\begin{proof}
Clearly $\abflat{M}{N}$ness is stronger than $\abflat{\CC}{N}$ness. On the other hand, if $R\subseteq M$ is $\abflat{\CC}{N}$ and 
\[
0\To K\To L \To Q\To 0
\]
is a $\dim_M$-exact sequence, then by Lemma \ref{completion-properties} (i) the $M$-rank completion of this sequence is properly exact and hence
\[
0\To N\tens_Rc_M(K) \To N\tens_R c_M(L) \To N\tens_R c_M(Q) \To 0
\]
is $\dim_N$-exact. Applying Lemma \ref{completion-properties} (ii), this complex of $N$-modules is $\dim_N$-isomorphic to the complex
\[
0\To N\tens_R K \To N\tens_R L \To N\tens_R Q \To 0
\]
which is therefore also $\dim_N$-exact; i.e.~the inclusion $R\subseteq N$ is $\abflat{M}{N}$.

\end{proof}

The following lemma shows that  a projective $\dim_M$-resolution is as good as an honest projective resolution for computing $\Tor$ as long as we only care about the dimension (compare also with \cite[Lemma 1.4]{neshveyev-rustad}).

\begin{lemma} \label{lma:dimres}
Let $M\subseteq N$ be a trace-preserving inclusion of finite von Neumann algebras and let $R$ be an intermediate $M$-compatible $*$-algebra. Suppose that $P_*\rightarrow L \rightarrow 0$ is a projective $\dim_{M}$-resolution of the left $R$-module $L$. Then for all $i\geq 0$
\begin{equation}
\dim_{N} \Tor_i^{R} (N,L) = \dim_{N} H_i(N\otimes_R P_*). \nonumber
\end{equation}
\end{lemma}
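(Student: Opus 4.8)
The plan is to compare the given $\dim_M$-resolution $P_*$ with an ordinary projective resolution and to check that, after applying $N\otimes_R-$, the two compute the same homology up to $\dim_N$. First I would fix a genuine projective resolution $Q_*\to L\to 0$ of the $R$-module $L$, so that by definition $H_i(N\otimes_R Q_*)=\Tor_i^R(N,L)$. Since each $P_i$ is projective and $Q_*\to L$ is honestly exact, the comparison theorem of homological algebra (see e.g.\ \cite{Weibel}) lifts $\id_L$ to a chain map $\phi\colon P_*\to Q_*$, unique up to chain homotopy. The entire problem then becomes showing that $N\otimes_R\phi$ is a $\dim_N$-quasi-isomorphism, i.e.\ that $\dim_N H_i(N\otimes_R P_*)=\dim_N H_i(N\otimes_R Q_*)$ for all $i$.

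To organise this I would pass to the mapping cone $C_*:=\operatorname{Cone}(\phi)$, which is again a complex of projective $R$-modules concentrated in non-negative degrees and satisfies $N\otimes_R C_*=\operatorname{Cone}(N\otimes_R\phi)$. From the long exact homology sequence of the cone, using that $H_n(Q_*)=0$ for $n\geq 1$ while $H_0(Q_*)=L$, and that $\phi_0$ induces the augmentation $H_0(P_*)\to L$, one reads off $H_n(C_*)\cong H_{n-1}(P_*)$ for $n\geq 2$, together with $H_1(C_*)\cong\ker(H_0(P_*)\to L)$ and $H_0(C_*)\cong\coker(H_0(P_*)\to L)$. As $P_*\to L\to 0$ is $\dim_M$-exact, all these modules are $\dim_M$-zero, so $C_*$ is a $\dim_M$-acyclic complex of projectives, and $N\otimes_R\phi$ is a $\dim_N$-quasi-isomorphism precisely when $N\otimes_R C_*$ is $\dim_N$-acyclic. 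Everything is thus reduced to the self-contained assertion: \emph{if $D_*$ is a bounded-below complex of projective $R$-modules with $\dim_M H_n(D_*)=0$ for all $n$, then $\dim_N H_n(N\otimes_R D_*)=0$ for all $n$.}

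For this I would bring in the rank completion of Lemma \ref{completion-properties}. On the one hand, part (i) turns $D_*$ into the honestly exact complex $c_M(D_*)$; on the other hand, by part (ii) the natural maps $N\otimes_R D_n\to N\otimes_R c_M(D_n)$ are $\dim_N$-isomorphisms, and they assemble into a chain map $N\otimes_R D_*\to N\otimes_R c_M(D_*)$. A chain map that is a $\dim_N$-isomorphism in every degree is a $\dim_N$-isomorphism on homology: factor it degreewise through its image, obtaining two short exact sequences of complexes whose outer terms are $\dim_N$-zero in each degree, and conclude from the two associated long exact sequences and the additivity of $\dim_N$. Hence $\dim_N H_n(N\otimes_R D_*)=\dim_N H_n(N\otimes_R c_M(D_*))$, and it remains to see that $N\otimes_R c_M(D_*)$ is $\dim_N$-acyclic. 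The complex $c_M(D_*)$ is exact and bounded below, so splitting it into short exact sequences $0\to Z_k\to c_M(D_k)\to Z_{k-1}\to 0$ and dimension-shifting reduces the whole point to controlling $\Tor_1^R(N,-)$ of the syzygies $Z_k$; for modules that are rank-complete the argument from Fact-type collapses (as in the proof that $\dim_M W=0$ forces $\dim_N(N\otimes_R W)=0$) can be adapted, the decisive input being that the terms $c_M(D_k)$ are flat over $R$, in which case $N\otimes_R-$ preserves the exactness of $c_M(D_*)$ on the nose.

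The main obstacle is exactly this last point: verifying that $N\otimes_R-$ creates no homology when applied to the exact complex $c_M(D_*)$, equivalently that rank completions of projective $R$-modules are flat, so that the syzygy sequences stay exact after tensoring. This is the genuinely technical heart of the lemma, and it is precisely where $M$-compatibility of $R$ and the fine properties of the rank completion recorded in Lemma \ref{completion-properties} (together with Sauer's local criterion, Theorem \ref{sauers-local-criterion}) must be used; I would isolate it and prove it with care. Once it is established, combining it with the reduction above gives $\dim_N H_i(N\otimes_R P_*)=\dim_N H_i(N\otimes_R Q_*)=\dim_N\Tor_i^R(N,L)$ for all $i\geq 0$, as required.
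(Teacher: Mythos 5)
Your reduction to the mapping cone, and the statement you isolate from it (a bounded-below complex $D_*$ of projective $R$-modules with $\dim_M H_n(D_*)=0$ for all $n$ satisfies $\dim_N H_n(N\otimes_R D_*)=0$ for all $n$), are both correct, and the first half of your treatment of that statement --- passing to $c_M(D_*)$ and observing that the degreewise $\dim_N$-isomorphisms $N\otimes_R D_n\to N\otimes_R c_M(D_n)$ of Lemma \ref{completion-properties}(ii) induce $\dim_N$-isomorphisms on homology --- is fine. The gap is the final step: you need $N\otimes_R c_M(D_*)$ to be $\dim_N$-acyclic, and you propose to deduce this from flatness of the rank completions $c_M(D_k)$ over $R$, which you explicitly defer. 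That is not a routine verification one can postpone: rank completions of projective $R$-modules are large non-projective modules, neither Lemma \ref{completion-properties} nor Sauer's criterion gives any flatness, and your dimension-shifting through the syzygies $Z_k$ does not run without it. Moreover you would be proving something strictly stronger than needed (honest exactness of $N\otimes_R c_M(D_*)$ rather than $\dim_N$-exactness). As written, the decisive step of the proof is missing.

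The paper avoids this issue entirely by a two-sided comparison that never tensors an exact complex and asks exactness to survive. It takes an honest projective resolution $Q_*$ of $L$ and applies the comparison theorem twice --- over the exact augmented complexes $Q_*\to L$ and $c_M(P_*)\to c_M(L)$, the latter exact by Lemma \ref{completion-properties}(i) --- to get chain maps $u_*\colon P_*\to Q_*$ and $v_*\colon Q_*\to c_M(P_*)$ whose composite lifts $\mathfrak{c}\colon L\to c_M(L)$ and therefore, by uniqueness of lifts up to homotopy, induces on $H_i(N\otimes_R -)$ the same map as $\id\otimes\mathfrak{c}\colon N\otimes_R P_*\to N\otimes_R c_M(P_*)$, which is a $\dim_N$-isomorphism by Lemma \ref{completion-properties}(ii). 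Hence $\dim_N\ker\bar u_i=0$, giving one inequality; the analogous composite $Q_*\to c_M(P_*)\to c_M(Q_*)$ gives $\dim_N\ker\bar v_i=0$ and the reverse inequality. Note that your isolated statement is precisely Lemma \ref{lma:dimres} applied to $L=0$ (so $Q_*=0$), so this sandwich argument also proves it; some such argument must replace your unproved flatness claim for your route to close.
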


\begin{proof}
Let $(P_i)$ be a projective $\dim_{M}$-resolution of $L$ as in the statement and let $(Q_i)$ be an honest projective resolution of $L$. 
By Lemma \ref{completion-properties}, both $c_M(P_i)$ and $c_M(Q_i)$ are therefore honest (not necessarily projective) $R$-resolutions of $c_M(L)$.
By the comparison theorem \cite[Theorem 2.2.6]{Weibel} (see also  \cite[Porism 2.2.7]{Weibel}) we therefore get chain maps $u_*,v_*$ making the following diagram, in which the arrows denoted $\mathfrak{c}$ are the canonical maps into rank completions, commute.
\begin{displaymath}
\xymatrix{ & P_* \ar[rr] \ar[dl]_{\exists u_*} \ar[dd]^<<<<<<<{\mathfrak{c}} & & L \ar[rr] \ar[dl]_{\id} \ar'[d][dd]^{\mathfrak{c}} & & 0 \\
 Q_* \ar'[r][rr] \ar[dr]_{\exists v_*} \ar[dd]^{\mathfrak{c}} & & L \ar[rr] \ar[dr]^{\mathfrak{c}} \ar[dd]^<<<<<<<<<<<<<<<{\mathfrak{c}} & & 0 \\
  & c_M(P_*) \ar'[r][rr] \ar[dl]_{c_M(u_*)} & & c_M(L) \ar[rr] \ar[dl]_{\id} & & 0 \\
 c_M(Q_*) \ar[rr] & & c_M(L) \ar[rr] & & 0
 }
\end{displaymath}
We first consider the upper part of the diagram. Since $\mathfrak{c}\colon P_*\to c_M(P_*)$ lifts $\mathfrak{c}\colon L\to c_M(L)$ and this lift is unique up to homotopy \cite[Porism 2.2.7]{Weibel} we obtain that the composition of induced maps
\begin{equation}
H_i(N\otimes_R P_*) \xrightarrow{\bar{u}_i} \underbrace{H_i(N\otimes_R Q_*)}_{=\Tor_i^R(N,L)} \xrightarrow{\bar{v}_i} H_i(N\otimes_R c_M(P_*)) \nonumber
\end{equation}
is the same map as $\bar{\mathfrak{c}} \colon H_i(N\otimes_R P_*) \rightarrow H_i(N\otimes_R c_M(P_*))$. The latter is a $\dim_N$-isomorphism since it is the map induced on homology by the chain map $\id\tens \mathfrak{c}\colon N\tens_R P_i \to N\tens_R c_M(P_i)$, which is a $\dim_N$-isomorphism by Lemma \ref{completion-properties}.
In particular $\dim_{N} \ker \bar{u}_i = 0$ for all $i$, proving the inequality ``$\geq$'' of the statement.
For the other inequality we note, similarly, that the composition of induced maps
\begin{equation}
H_i(N \otimes_R Q_*) \xrightarrow{\bar{v}_i} H_i(N \otimes_R c_M(P_*)) \xrightarrow{\overline{c_M(u_i)}} H_i(N\otimes_R c_M(Q_*)) \nonumber
\end{equation}
is the same map as $\bar{\mathfrak{c}}\colon H_i(N \otimes_R Q_*) \rightarrow H_i(N\otimes_R c_M(Q_*))$ so that $\dim_{N} \ker \bar{v}_i = 0$.
\end{proof}

\begin{corollary}[Dimension flat base change] \label{lma:flatbasechange}
Let $M\subseteq N$ be a trace-preserving inclusion of finite von Neumann algebras and let $R \subseteq S$ be intermediate $M$-compatible $*$-algebras. Suppose that the functor $S\otimes_{R} -$ from $R$-modules to $S$-modules is $\abexact{M}{M}$. Then for every $R$-module $L$ and every $i\in \NN_0$ we have
\begin{equation}
\dim_{N} \Tor_i^{R} (N,L) = \dim_{N} \Tor_i^{S}(N,S\otimes_{R}L). \nonumber
\end{equation}
\end{corollary}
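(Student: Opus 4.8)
The plan is to imitate the classical flat base change argument, with honest exactness replaced by $\dim_M$-exactness and with Lemma \ref{lma:dimres} providing the license to compute $\Tor$ from $\dim_M$-resolutions. Concretely, I would start from an honest projective resolution $P_* \to L \to 0$ of $L$ over $R$, so that by definition $\dim_N\Tor_i^R(N,L) = \dim_N H_i(N\otimes_R P_*)$. Applying the functor $S\otimes_R -$ yields a complex $S\otimes_R P_* \to S\otimes_R L \to 0$ of $S$-modules in which each $S\otimes_R P_i$ is projective over $S$ (a direct summand of a free $R$-module tensors up to a direct summand of a free $S$-module). The crux of the argument is to show that this complex is a projective $\dim_M$-resolution of the $S$-module $S\otimes_R L$.

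To establish this I would break $P_*$ into the short exact sequences $0\to Z_n \to P_n \to Z_{n-1}\to 0$ with $Z_n = \ker d_n$ and $Z_{-1}=L$. Each is genuinely exact, hence $\dim_M$-exact, so the $\abexact{M}{M}$ hypothesis on $S\otimes_R -$ makes
\[
0\To S\otimes_R Z_n \To S\otimes_R P_n \To S\otimes_R Z_{n-1}\To 0
\]
$\dim_M$-exact for every $n$. Right-exactness of $S\otimes_R -$ shows that the surjection $P_n \twoheadrightarrow Z_{n-1}$ tensors to a genuine surjection, so the degree-$n$ differential of $S\otimes_R P_*$ factors as a genuine surjection onto $S\otimes_R Z_{n-1}$ followed by $S\otimes_R Z_{n-1}\to S\otimes_R P_{n-1}$; consequently its image equals the image of the latter. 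Feeding this into the homology computation, the $n$-th homology of $S\otimes_R P_*$ is built out of two zero-dimensional pieces, namely the middle homology of the level-$n$ sequence (zero-dimensional by hypothesis) and the kernel of $S\otimes_R Z_{n-1}\to S\otimes_R P_{n-1}$ (zero-dimensional by the $\dim_M$-injectivity furnished by the level-$(n-1)$ sequence). Since $\dim_M$ is additive on short exact sequences, zero-dimensionality passes through the resulting extension, and the analogous bookkeeping at the augmentation (using $Z_{-1}=L$) finishes the verification that $S\otimes_R P_* \to S\otimes_R L\to 0$ is $\dim_M$-exact. This splicing step, keeping track of $\im(S\otimes_R Z_n)$, $\ker(S\otimes_R P_n \to S\otimes_R Z_{n-1})$ and $\ker d_n$ and checking that the consecutive subquotients vanish dimensionally, is the main obstacle.

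With the $\dim_M$-resolution in hand, Lemma \ref{lma:dimres} applied to the $M$-compatible algebra $S$ and the inclusion $M\subseteq N$ gives
\[
\dim_N \Tor_i^S(N, S\otimes_R L) = \dim_N H_i\bigl(N\otimes_S (S\otimes_R P_*)\bigr).
\]
Finally I would invoke the associativity isomorphism $N\otimes_S(S\otimes_R P_i)\cong N\otimes_R P_i$, which is natural in $i$ and hence identifies the complexes $N\otimes_S(S\otimes_R P_*)$ and $N\otimes_R P_*$. Therefore $\dim_N H_i\bigl(N\otimes_S(S\otimes_R P_*)\bigr) = \dim_N H_i(N\otimes_R P_*) = \dim_N\Tor_i^R(N,L)$, which is exactly the asserted equality.
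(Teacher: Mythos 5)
Your proposal is correct and follows essentially the same route as the paper: tensor an honest projective (there, free) resolution of $L$ up to $S$, observe that the $\abexact{M}{M}$ hypothesis makes the result a projective $\dim_M$-resolution of $S\otimes_R L$, and conclude via Lemma \ref{lma:dimres} together with the associativity isomorphism $N\otimes_S(S\otimes_R P_*)\cong N\otimes_R P_*$. The splicing-into-short-exact-sequences verification you carry out is exactly the detail the paper leaves implicit in the phrase ``is a free $\dim_M$-resolution,'' and your bookkeeping there is sound.
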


\begin{proof}
Let $P_*\rightarrow L \rightarrow 0$ be a resolution of $L$ by free $R$-modules. Then
\begin{equation}
S\otimes_{R} P_* \rightarrow S\otimes_{R} L \rightarrow 0 \nonumber
\end{equation}
is a free $\dim_{M}$-resolution of $S\otimes_{R} L$ so the claim follows directly from the previous lemma.
\end{proof}

Returning to the case of just one intermediate $*$-algebra $R$, we have the following equivalent characterizations of $\abflat{M}{N}$ness of the ring inclusion $R\subseteq N$, which is nothing but a straight forward dimension-adapted version of a classical result in homological algebra; see e.g.~\cite[Exercise 3.2.1]{Weibel}.

\begin{prop}\label{dim-flatness-eq-prop}
For the tower $M\subseteq R\subseteq N$, where $R$ is $M$-compatible, the following are equivalent.
\begin{itemize}
\item[(i)] The inclusion $R\subseteq N$ is $\abflat{M}{N}$; i.e. the functor $N\otimes_R -$ is $\abexact{M}{N}$.
\item[(ii)]
For every $k\geq 1$ and every left $R$-module $K$ we have $\dim_N\Tor_k^{R}(N,K)=0$.
\end{itemize}
\end{prop}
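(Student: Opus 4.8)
The plan is to reduce to the case $M=\CC$ via Corollary \ref{cor:flatspec} and then run the dimension-relaxed version of the classical homological criterion for flatness. First I would invoke Corollary \ref{cor:flatspec}: since $R$ is $M$-compatible, the inclusion $R\subseteq N$ is $\abflat{M}{N}$ if and only if it is $\abflat{\CC}{N}$. Because $\dim_\CC L=0$ exactly when $L=0$, a complex of $\CC$-modules is $\dim_\CC$-exact precisely when it is exact; hence condition (i) is equivalent to the assertion that $N\otimes_R-$ carries every genuinely exact short exact sequence of $R$-modules to a $\dim_N$-exact sequence. It therefore suffices to prove that this reduced condition is equivalent to (ii). Throughout I would rely on the standard properties of the dimension function recorded in \cite[Theorem 6.7]{Luck02}: submodules and quotients of a zero-dimensional module are zero-dimensional, and $\dim_N$ is additive on short exact sequences, so an extension of zero-dimensional modules is again zero-dimensional.

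For (ii) $\Rightarrow$ (i) I would start from a short exact sequence $0\to K\to L\to Q\to 0$ and write down the long exact $\Tor$-sequence
\[
\cdots \to \Tor_1^R(N,Q) \xrightarrow{\partial} N\otimes_R K \to N\otimes_R L \to N\otimes_R Q \to 0.
\]
Exactness of this long sequence makes the three-term complex $0\to N\otimes_R K\to N\otimes_R L\to N\otimes_R Q\to 0$ genuinely exact at the last two spots, while its homology at $N\otimes_R K$ is $\im\partial$, a quotient of $\Tor_1^R(N,Q)$. Since (ii) gives $\dim_N\Tor_1^R(N,Q)=0$, the quotient property forces $\dim_N\im\partial=0$, so the image complex is $\dim_N$-exact, which is exactly the reduced condition (i).

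For (i) $\Rightarrow$ (ii) I would argue by dimension shifting. Fixing $K$, I would choose a short exact sequence $0\to K_1\to P_0\to K\to 0$ with $P_0$ free. Applying $N\otimes_R-$ and invoking the reduced (i), the complex $0\to N\otimes_R K_1\to N\otimes_R P_0\to N\otimes_R K\to 0$ is $\dim_N$-exact, so in particular its homology at $N\otimes_R K_1$, namely $\ker(N\otimes_R K_1\to N\otimes_R P_0)$, is zero-dimensional. Because $P_0$ is free, the connecting homomorphism identifies $\Tor_1^R(N,K)$ with this kernel, giving $\dim_N\Tor_1^R(N,K)=0$; as $K$ is arbitrary, $\dim_N\Tor_1^R(N,-)$ vanishes identically. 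For $k\geq 2$ I would use the standard shift isomorphism $\Tor_k^R(N,K)\cong\Tor_{k-1}^R(N,K_1)$ and induct, reducing every higher $\Tor$ to a first $\Tor$ of an iterated syzygy, whence $\dim_N\Tor_k^R(N,K)=0$ for all $k\geq 1$.

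The computations are the classical ones with \lq\lq exact\rq\rq\ softened to \lq\lq zero-dimensional homology\rq\rq, so the only genuine care needed is in verifying that zero-dimensionality propagates through the subquotients and images occurring in the long exact sequence; this is precisely what the additivity and monotonicity of $\dim_N$ supply. The one conceptual step, rather than an obstacle, is the reduction to $M=\CC$ through Corollary \ref{cor:flatspec}: it is what makes the source side of the functor genuinely exact, so that ordinary $\Tor$ and its long exact sequence are available and one need not track $\dim_M$-errors through resolutions --- the very bookkeeping that the rank-completion lemmas of this section were built to circumvent.
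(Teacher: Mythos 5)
Your proof is correct, and it is precisely the ``straightforward dimension-adapted version'' of the classical criterion that the paper invokes without proof (citing \cite[Exercise 3.2.1]{Weibel}): reduce to genuinely exact input sequences via Corollary \ref{cor:flatspec}, then read off (ii)$\Rightarrow$(i) from the long exact $\Tor$-sequence together with monotonicity of $\dim_N$ under quotients, and (i)$\Rightarrow$(ii) by dimension shifting along syzygies. You correctly identify the one point of substance, namely that the reduction to $\abflat{\CC}{N}$ is what makes the long exact sequence available for the (ii)$\Rightarrow$(i) direction; nothing further is needed.
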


Before returning to the case of groupoids we record a minor result which will turn out useful in the sections to come.

\begin{lemma}\label{alg-crossed-dim-flat}
Let $M \subseteq N$ be a trace-preserving inclusion of von Neumann algebras with an intermediate $*$-algebra $R$ such that the inclusion $R\subseteq N$ is $\abflat{M}{N}$, and let $\Gamma$ be a discrete countable group acting on $N$ and preserving $R$ globally. Then the inclusion $R\rtimes \Gamma\subseteq N\rtimes \Gamma$ is also $\abflat{M}{N}$.
\end{lemma}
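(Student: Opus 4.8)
The plan is to verify the $\Tor$-criterion of Proposition~\ref{dim-flatness-eq-prop}. Write $A:=R\rtimes\Gamma$ and $\bar{B}:=N\rtimescom\Gamma$, and note that $A$ is $M$-compatible inside $\bar{B}$ exactly as in Remark~\ref{dim-compatible-example-rem}: the elements of $R$ act Lipschitz-ly by hypothesis, and each canonical unitary $u_\gamma$ does so as well because the trace-preserving automorphism $\sigma_\gamma$ carries $M$ onto $M$ and preserves $\tau|_M$ (this invariance of $M$ under the action is the mild point to keep track of; it holds in all the applications, where $M=N=L^\infty(X)$ is globally invariant). It then suffices to prove that $\Tor_k^{A}(\bar{B},K)$ is $\dim_{\bar{B}}$-negligible for every $A$-module $K$ and every $k\geq 1$. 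I will use two structural facts: that $A=R\rtimes\Gamma$ is free as a right $R$-module on the basis $\{u_\gamma\}_{\gamma\in\Gamma}$, so that $A\otimes_R(-)$ is exact and sends free $R$-modules to free $A$-modules; and the bimodule identification $N\rtimes\Gamma\cong N\otimes_R A$, giving $(N\rtimes\Gamma)\otimes_A K\cong N\otimes_R K$.

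First I would treat induced modules. Freeness of $A$ over $R$ shows that $A\otimes_R(-)$ is $\abexact{M}{M}$: for a $\dim_M$-negligible $R$-module $L$ each summand $u_\gamma\otimes L$ of $A\otimes_R L$ is again $\dim_M$-negligible, since $\sigma_\gamma$ transports a trace-one sequence of annihilating projections in $M$ to another such sequence. Dimension-flat base change (Corollary~\ref{lma:flatbasechange}), applied to the $M$-compatible tower $R\subseteq A$ with top algebra $\bar{B}$, then yields
\[
\dim_{\bar{B}}\Tor_k^{A}\big(\bar{B},\,A\otimes_R L\big)=\dim_{\bar{B}}\Tor_k^{R}\big(\bar{B},L\big)
\qquad(L\ \text{an }R\text{-module}),
\]
so for induced modules the problem is transferred to the base inclusion $R\subseteq N$.

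To control the right-hand side I would choose a free resolution $Q_\bullet\to L$ over $R$ and factor $\bar{B}\otimes_R Q_\bullet=\bar{B}\otimes_N(N\otimes_R Q_\bullet)$. The inner complex computes $\Tor_\bullet^{R}(N,L)$, whose homology is $\dim_N$-negligible in positive degrees because $R\subseteq N$ is $\abflat{M}{N}$ (pass to $\abflat{\CC}{N}$ via Corollary~\ref{cor:flatspec}). It remains to push this negligibility through the coefficient $\bar{B}$, i.e.\ to see that $\bar{B}\otimes_N(-)$ sends a complex of free $N$-modules with $\dim_N$-negligible homology to one with $\dim_{\bar{B}}$-negligible homology. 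This completion step is the crux, and it must be carried out with $\bar{B}$ kept as the fixed coefficient over $N$: the algebraic crossed product $N\rtimes\Gamma$ is free over $N$, and the rank-completion results of Lemma~\ref{completion-properties} let one replace modules by their $N$-rank completions up to $\dim_{\bar{B}}$-isomorphism, while the $N$-compatibility consequence recorded after that lemma guarantees that $\bar{B}\otimes_N W$ is $\dim_{\bar{B}}$-negligible whenever $W$ is $\dim_N$-negligible. The essential, and genuinely delicate, input is precisely that the coefficient inclusion $N\subseteq\bar{B}$ is dimension flat, handled through the rank metric.

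I expect this coefficient step to be the main obstacle, and its subtlety is worth emphasizing: one must \emph{not} try to factor the functor through the inclusion $N\rtimes\Gamma\subseteq\bar{B}=N\rtimescom\Gamma$ and invoke flatness there, because for $N=\CC$ that inclusion is $\CC\Gamma\subseteq L\Gamma$, whose dimension flatness is exactly the statement of Lück's conjecture; the whole weight of the argument therefore rests on distinguishing the (flat) coefficient inclusion $N\subseteq\bar{B}$ from the (non-flat) algebraic crossed product $N\rtimes\Gamma\subseteq\bar{B}$. Finally I would remove the restriction to induced modules by feeding the $R$-split relative bar resolution $\cdots\to A\otimes_R A\otimes_R K\to A\otimes_R K\to K\to 0$, all of whose terms are induced, into the computation above through the associated change-of-rings spectral sequence: the contributions away from the $q=0$ row vanish in $\dim_{\bar{B}}$ by the induced-module case, and the surviving relative row is controlled by the same dimension-flatness of $R\subseteq N$ combined with the completion analysis of the previous paragraph. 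This yields $\dim_{\bar{B}}\Tor_k^{A}(\bar{B},K)=0$ for every $A$-module $K$ and every $k\geq 1$, which is the asserted dimension flatness of $R\rtimes\Gamma\subseteq N\rtimescom\Gamma$.
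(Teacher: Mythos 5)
You are proving a different, and in general false, statement. The lemma's conclusion concerns the \emph{algebraic} crossed product: it asserts that $(N\rtimes\Gamma)\otimes_{R\rtimes\Gamma}-$ carries short $\dim_M$-exact sequences to $\dim_N$-exact ones. You instead set $\bar{B}=N\rtimescom\Gamma$ and aim to show $\dim_{\bar{B}}\Tor_k^{R\rtimes\Gamma}(\bar{B},K)=0$ for all $k\geq 1$, i.e.\ dimension flatness of $R\rtimes\Gamma$ inside the von Neumann algebraic crossed product. Taking $M=N=R=\CC$ (all hypotheses of the lemma hold trivially, and every algebra is $\CC$-compatible), your target becomes $\dim_{L\Gamma}\Tor_k^{\CC\Gamma}(L\Gamma,K)=0$, which is exactly L\"uck's conjecture and fails for non-amenable $\Gamma$ (e.g.\ $k=1$, $K=\CC$, $\Gamma=\FF_2$ gives $\beta_1^{(2)}(\FF_2)=1$). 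You correctly flag this danger when warning against factoring through $N\rtimes\Gamma\subseteq N\rtimescom\Gamma$, but it is not avoidable by rerouting: it \emph{is} the statement you are trying to prove. The gap surfaces concretely in your final step: after the hyperhomology spectral sequence of the relative bar resolution kills the rows $q\geq 1$ (your induced-module case, which is fine), the surviving $q=0$ row is the relative bar complex $\bar{B}\otimes_R(R\rtimes\Gamma)^{\otimes_R \bullet}\otimes_R K$, whose homology encodes the group homology of $\Gamma$ and is \emph{not} controlled by dimension flatness of $R\subseteq N$; in the test case above this row is literally the standard bar complex computing $\Tor_*^{\CC\Gamma}(L\Gamma,K)$, so the spectral sequence hands the problem back unsolved.

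The intended statement has a one-line proof built on an isomorphism you already use in passing: since $R\rtimes\Gamma$ is free over $R$ on the $u_\gamma$ and $\Gamma$ preserves $R$, one has $N\rtimes\Gamma\cong N\otimes_R(R\rtimes\Gamma)$ as $(N,R\rtimes\Gamma)$-bimodules, whence a natural isomorphism of $N$-modules $(N\rtimes\Gamma)\otimes_{R\rtimes\Gamma}K\cong N\otimes_R K$ for every $R\rtimes\Gamma$-module $K$. Applying $(N\rtimes\Gamma)\otimes_{R\rtimes\Gamma}-$ to a short $\dim_M$-exact sequence of $R\rtimes\Gamma$-modules therefore yields, up to natural isomorphism, the result of applying $N\otimes_R-$ to the underlying sequence of $R$-modules (restriction does not affect $\dim_M$-exactness), and the latter is $\dim_N$-exact because $R\subseteq N$ is $\abflat{M}{N}$. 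No compatibility of $M$ with the action, no base change, no rank completion and no spectral sequence are needed; the lemma is only ever used downstream (in Corollary \ref{dim-flat-base-change-v2}) to verify that $(L^\infty(X)\rtimes\Gamma)\otimes_{\CC[\B]\rtimes\Gamma}-$ is $\abexact{L^\infty(X)}{L^\infty(X)}$, and the passage to $L^\infty(X)\rtimescom\Gamma$ happens later and only for amenable $\Gamma$.
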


\begin{proof}
Let $0 \rightarrow K \xrightarrow{\iota} L \xrightarrow{\pi} Q \rightarrow 0$ be a short $\dim_M$-exact sequence of $R\rtimes \Gamma$-modules. The statement in the lemma is then just the observation that we have a commutative diagram of $N$-modules
\begin{displaymath}
\xymatrix{ 0 \ar[r] & (N\rtimes \Gamma)\otimes_{R\rtimes \Gamma} K \ar[r]^{\operatorname{id}\otimes \iota} & (N\rtimes \Gamma)\otimes_{R\rtimes \Gamma} L \ar[r]^{\operatorname{id}\otimes \pi} & (N\rtimes \Gamma)\otimes_{R\rtimes \Gamma} Q \ar[r] & 0 \\
 0 \ar[u]^{\wr}_{\operatorname{id}} \ar[r] & N\otimes_R K \ar[u]^{\wr}_{\operatorname{id}} \ar[r]^{\operatorname{id}\otimes \iota} & N\otimes_R L \ar[u]^{\wr}_{\operatorname{id}} \ar[r]^{\operatorname{id}\otimes \pi} & N\otimes_R Q \ar[u]^{\wr}_{\operatorname{id}} \ar[r] & 0.\ar[u]^{\wr}_{\operatorname{id}} }
\end{displaymath}
\end{proof}

\begin{comment} proof of old statement. (note switches M and N.)
\begin{proof}
Given an $R\rtimes \Gamma$-module $K$ we need (cf.~Proposition \ref{dim-flatness-eq-prop}) to prove that 
\[
\dim_M\Tor_p^{R\rtimes \Gamma}(M\rtimes\Gamma,K)=0.
\]
Let therefore $(F_*,{d}_*)$ be a free $R\rtimes \Gamma$-resolution of $K$ such that the induced complex
\[
\left( (M\rtimes\Gamma)\underset{R\rtimes \Gamma}{\otimes} F_*,\id\tens {d}_*\right)
\]
computes $\Tor_*^{R\rtimes \Gamma}(M\rtimes\Gamma,K)$. Since $R\rtimes \Gamma$ is free as a left $R$-module the complex $(F_*,{d}_*)$ also provides a free resolution of $K$ as an $R$-module and furthermore we have a natural isomorphism of complexes of left $M$-modules:
\[
\left( (M\rtimes\Gamma)\underset{R\rtimes \Gamma}{\otimes} F_*,\id\tens {d}_*\right)\simeq \left(M\underset{R}{\otimes} F_*,\id\tens {d}_*\right).
\]
Hence $\dim_M\Tor_p^{R\rtimes \Gamma}(M\rtimes\Gamma,K)=\dim_M\Tor_p^{R}(M,K)$ and the latter is zero as $R\subseteq M$ is assumed dimension flat.
\end{proof}
\end{comment}

\begin{rem}\label{non-dense-inclusion}
Note that if an inclusion $R\subseteq N$ is dimension flat then for any tracial inclusion $N\subseteq \tilde{N}$ into another finite von Neumann algebra $\tilde{N}$ the inclusion $R\subseteq \tilde{N}$ is also dimension flat. This is due to the fact that the inclusion $N\subseteq \tilde{N}$ is faithfully flat and the functor $\tilde{N}\tens_N -$ is dimension preserving \cite[Theorem 6.29]{Luck02}. Hence, dimension flatness of an inclusion $R\subseteq N$ is equivalent to dimension flatness of the inclusion of $R$ into the von Neumann subalgebra it generates in $N$.
\end{rem}

\begin{comment} odl proof.
\begin{proof}
Any element $T\in A\rtimes \Gamma\tens_{A}\Gamma$ can be written as $\sum_{i=1}^n u_{\gamma_i}\tens k_i  $ where $(u_\gamma)_{\gamma\in \Gamma}$ are the unitaries implementing the action, and
by Sauer's local criterion we need to prove that for any $\varps>0$ there exists a projection $p\in A$ such that $\tau(p^\perp)<\varps$ and $p.T=0$. By assumption $\dim_{A}M=0$ so we can find projections $p_i\in A$ all of trace bigger than $1-\varps/n$ such that $p.k_i=0$. Putting $p=\wedge_i \alpha_{\gamma_i} (p_i)$ we have that $\tau(p^\perp)<\varps$ and 
\[
p.T =\sum_i p \alpha_{\gamma_i}(p_i)u_{\gamma_i}\tens k_i=\sum_i pu_{\gamma_i} p_i\tens k_i=\sum_i pu_{\gamma_i}\tens p_ik_i=0.
\]
\end{proof}
\end{comment}

\subsection{Applications to groupoids} \label{sec:applicationstogroupoids}
We  now return to the setup from Section \ref{sec:groupoids}. More precisely, we consider an inclusion of discrete measured groupoids $\mathscr{H} \leq \mathscr{G}$ defined on the same object space $(X,\mu)$ and we wish to apply the results from Section \ref{sec:homalg} to the following diagram of inclusions.

\begin{displaymath}
\xymatrix{  & \mathbb{C}\mathscr{G} \ar@{^{(}->}[r] & L\mathscr{G} \\ L^{\infty}(X) \ar@{^{(}->}[r] & \mathbb{C}\mathscr{H} \ar@{^{(}->}[u] \ar@{^{(}->}[r] & L\mathscr{H} \ar@{^{(}->}[u]  }
\end{displaymath}

The following result is the analogue of the well-known observation that for an inclusion of groups $H\leq G$ the functor $L \mapsto \mathbb{C}G \otimes_{\mathbb{C}H} L$ is exact from $\mathbb{C}H$-modules to $\mathbb{C}G$-modules, i.e. that the inclusion $\mathbb{C}H\subseteq \mathbb{C}G$ is flat.

\begin{proposition}[Dimension-flatness of $\mathbb{C}\mathscr{H}\leq \mathbb{C}\mathscr{G}$] \label{prop:dimflatness}
The tensor functor $\mathbb{C}\mathscr{G}\otimes_{\mathbb{C}\mathscr{H}} -$  is $\abexact{L^{\infty}(X)}{L^{\infty}(X)}$ from the category of $\CC\H$-modules to the category of $\CC\G$-modules.
\end{proposition}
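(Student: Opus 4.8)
The plan is to compare the functor $\CC\G\otimes_{\CC\H}-$ with the much simpler functor coming from the free right $\CC\H$-module spanned by the coset representatives, and then to transport $\dim_{L^\infty(X)}$-exactness between the two along a natural dimension isomorphism. Concretely, set $p_{(i,l)}:=\bbb_{s(E_{(i,l),1})}\in L^\infty(X)$ and let $P:=\bigoplus_{(i,l)}\bbb_{E_{(i,l),1}}*\CC\H\subseteq \CC\G$ be the right $\CC\H$-submodule generated by the indicator functions $\bbb_{E_{(i,l),1}}$. Since $s$ and $t$ are injective on each $E_{(i,l),1}$, left convolution by $\bbb_{E_{(i,l),1}}$ identifies $\bbb_{E_{(i,l),1}}*\CC\H$ with the corner $p_{(i,l)}\CC\H$, and the disjointness argument recorded after the definition of $\mathscr{H}$-reduced form shows the sum is direct; thus $P\cong\bigoplus_{(i,l)}p_{(i,l)}\CC\H$ is a projective right $\CC\H$-module, and it is visibly stable under left multiplication by $L^\infty(X)$. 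The Decomposition Lemma \ref{lma:decomposition} says precisely that every $f\in\CC\G$ satisfies $\bbb_Y*f\in P$ for some $Y\subseteq X$ with $\mu(Y)\geq 1-\varepsilon$; hence in the quotient $C:=\CC\G/P$ every class is annihilated by left multiplication by projections $\bbb_{Y_n}\in L^\infty(X)$ with $\tau(\bbb_{Y_n})\to 1$, so Sauer's local criterion \ref{sauers-local-criterion} gives $\dim_{L^\infty(X)}C=0$.

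First I would check that $P\otimes_{\CC\H}-$ is already $\abexact{L^\infty(X)}{L^\infty(X)}$. As tensoring commutes with direct sums, $P\otimes_{\CC\H}L\cong\bigoplus_{(i,l)}p_{(i,l)}L$ naturally in $L$, so it suffices to observe that cutting down by a projection $p\in L^\infty(X)$ preserves $\dim$-exactness and that countable direct sums of zero-dimensional modules are zero-dimensional. The first point is immediate, since for a $\dim$-exact complex $K\to L\to Q$ the homology of $pK\to pL\to pQ$ is a quotient of the homology of the original complex and hence still zero-dimensional; the second is additivity of $\dim_{L^\infty(X)}$ over direct sums.

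The one genuinely new input is the following vanishing statement: for any right $\CC\H$-module $C$ with $\dim_{L^\infty(X)}C=0$ (computed through its natural $(L^\infty(X),\CC\H)$-bimodule structure, the two actions commuting by associativity of convolution) and any left $\CC\H$-module $L$ one has $\dim_{L^\infty(X)}\Tor_p^{\CC\H}(C,L)=0$ for all $p$. I would prove this by choosing a free resolution $F_*\to L$ of left $\CC\H$-modules and noting $C\otimes_{\CC\H}F_p\cong\bigoplus C$, which is zero-dimensional; since $\Tor_p^{\CC\H}(C,L)$ is a subquotient of $C\otimes_{\CC\H}F_p$, it is zero-dimensional as well. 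Feeding the short exact sequence $0\to P\to\CC\G\to C\to 0$ of right $\CC\H$-modules into the $\Tor$ long exact sequence against $L$ then shows that the natural map $\theta_L\colon P\otimes_{\CC\H}L\to\CC\G\otimes_{\CC\H}L$ has kernel a quotient of $\Tor_1^{\CC\H}(C,L)$ and cokernel $C\otimes_{\CC\H}L=\Tor_0^{\CC\H}(C,L)$; both being zero-dimensional, $\theta_L$ is a $\dim_{L^\infty(X)}$-isomorphism, naturally in $L$.

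Finally, given a short $\dim_{L^\infty(X)}$-exact sequence of $\CC\H$-modules I would apply both functors and use naturality of $\theta$ to obtain a commutative ladder whose top row $P\otimes_{\CC\H}-$ is $\dim$-exact and whose vertical maps are $\dim$-isomorphisms; the bottom row $\CC\G\otimes_{\CC\H}-$ is then forced to be $\dim$-exact. The step that requires the most care, and which I expect to be the main obstacle, is exactly this last transfer: one must argue that a levelwise $\dim_{L^\infty(X)}$-isomorphism of complexes forces equality of the dimensions of the homologies. I would settle this by passing to rank completions over $L^\infty(X)$, where Lemma \ref{completion-properties}(i) turns the $\dim$-exact top row into an honestly exact row of complete modules and, using faithfulness of $\dim_{L^\infty(X)}$ on complete modules, turns the vertical $\dim$-isomorphisms into genuine isomorphisms; exactness of the completed bottom row, which is equivalent to $\dim_{L^\infty(X)}$-exactness of the bottom row itself, then follows immediately. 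The only real bookkeeping throughout is keeping the left $L^\infty(X)$-action and the right $\CC\H$-action disentangled, which is harmless precisely because these two actions commute.
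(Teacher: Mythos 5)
Your proposal is correct, but it takes a genuinely different route from the paper. The paper reduces to an honestly exact input sequence via rank completion (Corollary \ref{cor:flatspec}), and then verifies $\dim_{L^\infty(X)}\ker(\id\otimes\iota)=0$ directly with Sauer's local criterion: it puts representatives of a kernel element on $\H$-reduced form via the Decomposition Lemma and then kills each coordinate using the explicitly constructed slice maps $\varphi_{i,l}$ of Lemma \ref{slice-maps-lem}. You instead package the same geometric input --- the Decomposition Lemma plus the disjointness/injectivity discussion following the definition of $\H$-reduced form --- into the structural statement that $P=\bigoplus_{(i,l)}\bbb_{E_{(i,l),1}}*\CC\H\cong\bigoplus_{(i,l)}p_{(i,l)}\CC\H$ is a projective right $\CC\H$-submodule of $\CC\G$ with zero-dimensional quotient $C$, and then run the $\Tor$ long exact sequence for $0\to P\to\CC\G\to C\to 0$ to see that $P\otimes_{\CC\H}-$ and $\CC\G\otimes_{\CC\H}-$ are naturally $\dim_{L^\infty(X)}$-isomorphic. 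This buys you a cleaner conceptual picture (and essentially re-derives the rank density underlying Proposition \ref{the-first-iso} as a by-product), at the cost of invoking more homological machinery; the paper's argument is more hands-on and self-contained, avoiding the long exact sequence and the vanishing lemma for $\Tor_p^{\CC\H}(C,-)$ entirely. Your auxiliary steps are all sound: the vanishing of $\Tor_p^{\CC\H}(C,L)$ for zero-dimensional $C$ via free resolutions and subquotients is correct, and your final transfer step is exactly what the paper's completion toolkit (Lemma \ref{completion-properties} and the fact that levelwise $\dim$-isomorphisms of complexes induce $\dim$-isomorphisms on homology, as used in the proof of Lemma \ref{lma:dimres}) provides.

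Two small bookkeeping points you should make explicit. First, under the identification $\bbb_{E_{(i,l),1}}*\CC\H\cong p_{(i,l)}\CC\H$ the left $L^\infty(X)$-action gets twisted by the partial isomorphism $\alpha_{(i,l)}$, i.e.\ $\bbb_Z*\bbb_{E_{(i,l),1}}*f=\bbb_{E_{(i,l),1)}}*\bbb_{\alpha_{(i,l)}^{-1}(Z\cap t(E_{(i,l),1}))}*f$; since $\alpha_{(i,l)}$ is measure preserving this does not affect zero-dimensionality, but the dimension of the homology of $\bigoplus_{(i,l)}p_{(i,l)}(\cdot)$ must be computed with respect to the twisted action. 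Second, in your "cutting by a projection" step one should note that $\ker(g|_{pL})=p\ker g$ and $f(pK)=pf(K)$ because the differentials are left $\CC\H$-linear and $p\in L^\infty(X)\subseteq\CC\H$, so the cut-down homology embeds into the original homology as $(p\ker g+f(K))/f(K)$. Neither point is a gap.
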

For the proof the following observation will be convenient.
\begin{lemma}\label{slice-maps-lem}
The maps $\varphi_{i,l}\colon \CC\G\to \CC\H$ given by $\varphi_{i,l}(f)(\gamma):=(\bbb_{E_{i,l},1}^*\ast f)(\gamma)$ for $\gamma \in \H$ are right $\CC\H$-linear and satisfy
\[
\varphi_{i,l}(\bbb_{E_{(k,m),1}})= \begin{cases} \bbb_{s(E_{(i,l),1})} &\mbox{{if} } {(i,l)=(k,m)}\\
0 & \mbox{{otherwise}}. \end{cases}
\]
\end{lemma}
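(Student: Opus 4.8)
The plan is to reduce both claims to a single explicit description of $\varphi_{i,l}$ and then read them off. First I would simplify the defining kernel: since $f^*(\gamma)=\overline{f(\gamma^{-1})}$ and indicator functions are real, $\bbb_{E_{(i,l),1}}^* = \bbb_{E_{(i,l),1}^{-1}}$, the indicator of the inverse bisection. As $s$ and $t$ are injective on $E_{(i,l),1}$ (hence $t$ is injective on $E_{(i,l),1}^{-1}$), for a fixed $\gamma\in\H$ there is at most one $\alpha\in E_{(i,l),1}^{-1}$ with $t(\alpha)=t(\gamma)$, namely $\alpha=\alpha_{(i,l),x}^{-1}$ where $x$ is the unique point with $s(\alpha_{(i,l),x})=t(\gamma)$; writing out the convolution then yields
\[
\varphi_{i,l}(f)(\gamma)=\begin{cases} f(\alpha_{(i,l),x}\,\gamma) & \text{if } t(\gamma)\in s(E_{(i,l),1}),\\ 0 & \text{otherwise.}\end{cases}
\]
Along the way I would note that $\varphi_{i,l}$ really takes values in $\CC\H$: the product $\bbb_{E_{(i,l),1}}^**f$ lies in $\CC\G$, and restricting it to $\H$ can only shrink the support, so the fibre-counting functions stay essentially bounded.

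For right $\CC\H$-linearity I would argue structurally, viewing $\varphi_{i,l}$ as left convolution by $\bbb_{E_{(i,l),1}}^*$ followed by restriction to $\H$. Associativity of the convolution gives $\bbb_{E_{(i,l),1}}^**(f*h)=(\bbb_{E_{(i,l),1}}^**f)*h$, so the only thing to verify is that restriction to $\H$ commutes with right convolution by $h\in\CC\H$. This is immediate: for $g\in\CC\G$ and $\gamma\in\H$, any factorization $\alpha\beta=\gamma$ with $\beta\in\supp(h)\subseteq\H$ forces $\alpha=\gamma\beta^{-1}\in\H$, so $(g*h)(\gamma)$ depends only on $g|_{\H}$; hence $(g*h)|_{\H}=(g|_{\H})*h$ and $\varphi_{i,l}(f*h)=\varphi_{i,l}(f)*h$.

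For the evaluation I would substitute $f=\bbb_{E_{(k,m),1}}$ into the formula. A value $\varphi_{i,l}(\bbb_{E_{(k,m),1}})(\gamma)$ is nonzero precisely when $\alpha_{(i,l),x}\gamma\in E_{(k,m),1}$; comparing targets forces this arrow to be $\alpha_{(k,m),x}$ for the \emph{same} $x$, so $\gamma=\alpha_{(i,l),x}^{-1}\alpha_{(k,m),x}$ with $x\in t(E_{(i,l),1})\cap t(E_{(k,m),1})$. If $(i,l)=(k,m)$ this reads $\gamma=\id_{s(\alpha_{(i,l),x})}\in\H$, and as $x$ ranges over $t(E_{(i,l),1})$ the base point $z=s(\alpha_{(i,l),x})$ ranges bijectively (injectivity of $s$) over $s(E_{(i,l),1})$, each receiving a single contribution of value $1$; thus $\varphi_{i,l}(\bbb_{E_{(i,l),1}})=\bbb_{s(E_{(i,l),1})}$.

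The main obstacle is the vanishing for $(i,l)\neq(k,m)$, which is where the groupoid structure enters and which I would split into two cases. If $i=k$ but $l\neq m$, then $t(E_{(i,l),1})$ and $t(E_{(i,m),1})$ are disjoint because $t$ is injective on $E_{i,1}=\sqcup_l E_{(i,l),1}$, so no admissible $x$ exists and every value is $0$. If $i\neq k$, the key point is that the fibres $E_i^x=t^{-1}(x)\cap E_i$ are \emph{distinct} right $\H$-orbits: were $\gamma=\alpha_{i,x}^{-1}\alpha_{k,x}$ an element of $\H$, then $\alpha_{k,x}=\alpha_{i,x}\gamma$ would lie in $\alpha_{i,x}\H=E_i^x$, contradicting $\alpha_{k,x}\in E_k^x$ with $E_i^x\neq E_k^x$. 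Hence $\gamma\notin\H$, the value vanishes, and the two stated cases are established.
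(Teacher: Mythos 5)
Your proof is correct and follows essentially the same route as the paper: both reduce $\varphi_{i,l}(f)(\gamma)$ to the single evaluation $f(\alpha_{(i,l),x}\gamma)$ using injectivity of $s$ on $E_{(i,l),1}$, verify linearity by the same convolution manipulation (your factorization through restriction to $\H$ is just a structural repackaging of the paper's double-sum computation), and read off the orthogonality relations from the explicit formula. If anything, your case analysis for the vanishing when $(i,l)\neq(k,m)$ --- disjointness of the $t$-images within $E_{i,1}$ when $i=k$, and the fact that distinct $E_i^x$ are distinct right $\H$-orbits when $i\neq k$ --- spells out a step the paper asserts in a single clause.
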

\begin{proof}
Let $f\in \CC\G$ and $g\in \CC\H$ be given. Consider $\gamma\in \H$ and put $x=t(\gamma)$. There is at most one arrow in $E_{(i,l),1}$ with source $x$. Assume first that this arrow $\alpha_{(i,l),y}\in E_{(i,l),1}$ exists.  Evaluating in $\gamma$ we now get 
\begin{align*}
\varphi_{i,l}(f\ast g)(\gamma) &=\sum_{\alpha\beta=\gamma} \bbb_{E_{(i,l),1}}(\alpha^{-1}) (f\ast g)(\beta)\\
&= (f\ast g)(\alpha_{(i,l),y}\gamma)\\
&=\sum_{\alpha\beta=\alpha_{(i,l),y}\gamma} f(\alpha)g(\beta)\\
&=\sum_{\beta\in \H}f(\alpha_{(i,l),y}\gamma\beta^{-1})g(\beta)
\end{align*}
On the other hand,
\begin{align*}
(\varphi_{(i,l)}(f)\ast g)(\gamma) &= \sum_{\alpha\beta=\gamma}(\bbb_{E_{(i,l),1}}^*\ast f )(\alpha) g(\beta)\\
&=\sum_{\stackrel{\alpha,\beta\in \H}{\alpha\beta=\gamma}} \sum_{\stackrel{\xi,\eta\in \G}{\xi\eta=\alpha}} \bbb_{E_{(i,l),1}}(\xi^{-1}) f(\eta)g(\beta)\\
&=\sum_{\stackrel{\alpha,\beta\in \H}{\alpha\beta=\gamma}}f(\alpha_{(i,l),y}\alpha)g(\beta)\\
&=\sum_{\beta\in \H} f(\alpha_{(i,l),y}\gamma\beta^{-1})g(\beta)
\end{align*}
In the remaining case, i.e.~when $E_{(i,l),1}\cap s^{-1}(x)$ is empty, both the above expressions are seen to be zero and we conclude that $\varphi_{i,l}$ is right $\CC\H$-linear.
The orthogonality relations follow in the same manner: If $\alpha_{(i,l),y}$ exists we have
\begin{align*}
\varphi_{i,l}(\bbb_{E_{(i,l),1}}^*\ast\bbb_{E_{(k,m),1}} )(\gamma)=\sum_{\alpha\beta=\gamma}\bbb_{E_{(i,l),1}}(\alpha^{-1})\bbb_{E_{(k,m),1}}(\beta)=\bbb_{E_{(k,m),1}}(\alpha_{(i,l),y}\gamma).
\end{align*}
The latter quantity is zero if $(k,m)\neq (i,l)$  and if $(k,m)=(i,l)$ it attains the value 1 exactly when $\gamma=\id_{s(\alpha_{(i,l),y})}=\id_x$, and is otherwise zero.
Again, the remaining case when $E_{(i,l),1}\cap s^{-1}(x)$ is empty runs similarly 
and the proof is complete.
\end{proof}

\begin{proof}[Proof of Proposition \ref{prop:dimflatness}.]
Suppose that the sequence of $\mathbb{C}\mathscr{H}$-modules
\begin{equation} \label{eq:dimflatness1}
0\To K \overset{\iota}{\To} L \overset{\pi}{\To} Q \To 0
\end{equation}
is $\dim_{L^{\infty}(X)}$-exact. By Corollary \ref{cor:flatspec} we may assume that the sequence \eqref{eq:dimflatness1} is in fact ($\dim_{\mathbb{C}}$-)exact. Then, since tensoring over a subring is always right-exact, it is enough to show that 
\[
\dim_{L^{\infty}(X)} \ker (\id\otimes \iota) = 0.
\]
For this we use Sauer's local criterion: Let $\xi =\sum_{r=1}^m f^{(r)}\tens x^{(r)} \in \ker(\id\tens \iota)$ and $0 < \varepsilon \leq 1$ be given. By the decomposition lemma we can find $Y_r\subseteq X$ such that $\mu(Y_r)>1-\varps/m$ and such that $\bbb_{Y_r}\ast f^{(r)}$ is on $\H$-reduced form. That is, there exists a finite set $D_r\subseteq (\sqcup_{i\in \NN} I_i)\times \NN$ and functions $f_{i,l}^{(r)}\in \CC\H$ such that
\[
\bbb_{Y_r}\ast f^{(r)}=\sum_{(i,l)\in D_r} \bbb_{E_{(i,l),1}}\ast f_{i,l}^{(r)}.
\]
By enlarging the expansion by zero-functions we may assume $D_1=\dots =D_m=:D$. Putting $Y:=\cap_{i=1}^m Y_r$ we have $\mu(Y)\geq 1-\varps$ and furthermore
\begin{align}\label{joint-reduced-form}
\bbb_Y.\xi &=\bbb_Y*\left (  \sum_{r=1}^m\left(\sum_{(i,l)\in D} \bbb_{E_{(i,l),1}}\ast f_{i,l}^{(r)} \right)\tens x^{(r)}  \right)\notag\\
&= \sum_{(i,l)\in D}(\bbb_Y\ast \bbb_{E_{(i,l),1} }) \tens \left ( \underbrace{\sum_{r=1}^m f_{i,l}^{(r)}x_r}_{=:y_{il}} \right ).  
\end{align}
As $\xi \in \ker(\id\tens\iota)$ we therefore have
\begin{align*}\label{reduced-form-eq}
0&=\bbb_Y.(\id\tens\iota)(\xi)\\
&=\bbb_Y.\left ( \sum_{(i,l)\in D}  \bbb_{E_{(i,l),1}}\tens  \iota(y_{i,l}) \right )\\
&=\sum_{(i,l)\in D} (\bbb_Y\ast \bbb_{E_{(i,l),1}})\tens \iota(y_{i,l})\\
&=\sum_{(i,l)\in D} (\bbb_{Y\cap t(E_{(i,l),1})}\ast \bbb_{E_{(i,l),1}})\tens \iota(y_{i,l})\\
&=\sum_{(i,l)\in D} \bbb_{E_{(i,l),1}}\ast \bbb_{\alpha_{i,l}^{-1}(Y\cap t(E_{(i,l),1}))}\tens \iota(y_{i,l})\\
&=\sum_{(i,l)\in D} \bbb_{E_{(i,l),1}}\tens \iota( \bbb_{\alpha_{i,l}^{-1}(Y\cap t(E_{(i,l),1}))}y_{i,l})
\end{align*}
Slicing the first leg with the maps from Lemma \ref{slice-maps-lem} we therefore obtain for every $(i,l)\in D$
\[
0=\iota(\bbb_{s(E_{(i,l),1})} \bbb_{\alpha_{i,l}^{-1}(Y\cap t(E_{(i,l),1}))}y_{i,l})=  \iota( \bbb_{\alpha_{i,l}^{-1}(Y\cap t(E_{(i,l),1}))}y_{i,l}),
\]
and thus $\bbb_{\alpha_{i,l}^{-1}(Y\cap t(E_{(i,l),1})}y_{i,l}=0$ for every $(i,l)\in D$. Performing the exact same manipulations in
\eqref{joint-reduced-form} we obtain
\[
\bbb_Y\ast \xi = \sum_{(i,l)\in D}(\bbb_Y\ast \bbb_{E_{(i,l),1} }) \tens \left( \sum_{r=1}^m f_{i,l}^{(r)}x_r \right)
=\sum_{(i,l)\in D} \bbb_{E_{(i,l),1}}\tens  \bbb_{\alpha_{i,l}^{-1}(Y\cap t(E_{(i,l),1}))}y_{i,l}=0. \qedhere
\]
\end{proof}

For an inclusion of groups $H\leq G$ one has $\mathbb{C}G \otimes_{\mathbb{C}H} \mathbb{C} \simeq \mathbb{C}[G/H]$. Again we have a similar result for groupoids which takes the following form.

\begin{proposition}\label{the-first-iso}
The composition
\begin{equation}
\mathbb{C}\mathscr{G} \otimes_{\mathbb{C}\mathscr{H}} L^{\infty}(X) \xrightarrow{{\mult}} \mathbb{C}\mathscr{G} \xrightarrow{\kappa_{\mathscr{H}}^{\mathscr{G}}} \mathbb{C}\left[ \mathscr{G}/\mathscr{H} \right] \nonumber
\end{equation}
is a $\dim_{L^{\infty}(X)}$-isomorphism. Here $\mult$ denotes the map $f\tens g\mapsto f\ast g $.
\end{proposition}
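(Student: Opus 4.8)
The plan is to reduce the whole tensor product to simple tensors against the unit of $L^\infty(X)$ and then run Sauer's local criterion. Write $\bbb_X$ for the unit of $L^\infty(X)$, which is also the identity of $\CC\G$. First I would record that both $\mult$ and $\kappa_\H^\G$ are left $\CC\G$-linear (the latter by Proposition-Definition~\ref{prop-defi-ting}), so the composite is $\CC\G$-linear and in particular $L^\infty(X)$-linear, which is what the notion of $\dim_{L^\infty(X)}$-isomorphism requires. Next, for $f\in \CC\G$ and $g\in L^\infty(X)\subseteq \CC\H$ one has $g.\bbb_X=\kappa_\H^\H(g\ast \bbb_X)=g$, so balancing over $\CC\H$ gives $f\tens g=(f\ast g)\tens \bbb_X$. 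Hence every element of $\CC\G\tens_{\CC\H}L^\infty(X)$ has the form $F\tens \bbb_X$ with $F\in \CC\G$, and the composite sends $F\tens \bbb_X\mapsto \kappa_\H^\G(F)$. Since $\kappa_\H^\G$ maps $\CC\G$ onto $\CC[\G/\H]$, the composite is already surjective, so its cokernel vanishes and only the kernel must be controlled.

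To show the kernel is zero-dimensional I would appeal to Sauer's local criterion (Theorem~\ref{sauers-local-criterion}). Fix $\xi=F\tens \bbb_X$ with $\kappa_\H^\G(F)=0$ and let $0<\varps\leq 1$. By the Decomposition Lemma~\ref{lma:decomposition} choose $Y\subseteq X$ with $\mu(Y)\geq 1-\varps$ such that $\bbb_Y\ast F=\sum_{(i,l)\in D}\bbb_{E_{(i,l),1}}\ast f_{i,l}$ is on $\H$-reduced form, with $f_{i,l}\in \CC\H$ and $t(\supp f_{i,l})\subseteq s(E_{(i,l),1})$. Because $\kappa_\H^\G$ is $\CC\G$-linear and $\bbb_Y\in L^\infty(X)$, the hypothesis yields $\kappa_\H^\G(\bbb_Y\ast F)=\bbb_Y.\kappa_\H^\G(F)=0$. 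The crux is the interface identity
\[
\kappa_\H^\G\!\left(\bbb_{E_{(i,l),1}}\ast f_{i,l}\right)=\bbb_{E_{(i,l),1}}\ast \kappa_\H^\H(f_{i,l}),
\]
which holds because evaluating $\kappa_\H^\G$ at the representative $\alpha_{i,x}$ sums $\bbb_{E_{(i,l),1}}\ast f_{i,l}$ over the right $\H$-orbit $E_i^x$, and this sum collapses to $\kappa_\H^\H(f_{i,l})(s(\alpha_{i,x}))$. As the sets $E_{(i,l),1}$ are pairwise disjoint, the vanishing of $\kappa_\H^\G(\bbb_Y\ast F)$ forces each summand $\bbb_{E_{(i,l),1}}\ast \kappa_\H^\H(f_{i,l})$ to vanish; since $s$ is injective on $E_{(i,l),1}$ and $\supp \kappa_\H^\H(f_{i,l})\subseteq t(\supp f_{i,l})\subseteq s(E_{(i,l),1})$, this gives $\kappa_\H^\H(f_{i,l})=0$ for every $(i,l)\in D$.

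It remains to transport this vanishing back to the tensor product. Balancing each $f_{i,l}\in \CC\H$ across exactly as before,
\begin{align*}
\bbb_Y.\xi&=(\bbb_Y\ast F)\tens \bbb_X=\sum_{(i,l)\in D}\left(\bbb_{E_{(i,l),1}}\ast f_{i,l}\right)\tens \bbb_X\\
&=\sum_{(i,l)\in D}\bbb_{E_{(i,l),1}}\tens\left(f_{i,l}.\bbb_X\right)=\sum_{(i,l)\in D}\bbb_{E_{(i,l),1}}\tens \kappa_\H^\H(f_{i,l})=0,
\end{align*}
the last equality because each $\kappa_\H^\H(f_{i,l})=0$. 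Thus the projection $p=\bbb_Y\in L^\infty(X)$ satisfies $\tau(p^\perp)=\mu(X\setminus Y)\leq \varps$ and $p.\xi=0$; applying this with $\varps=1/n$ produces the sequence of projections required by Theorem~\ref{sauers-local-criterion}, so $\dim_{L^\infty(X)}\ker(\kappa_\H^\G\circ \mult)=0$. Together with surjectivity this proves the composite is a $\dim_{L^\infty(X)}$-isomorphism.

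The step I expect to be the genuine obstacle is precisely the interface identity $\kappa_\H^\G(\bbb_{E_{(i,l),1}}\ast f_{i,l})=\bbb_{E_{(i,l),1}}\ast \kappa_\H^\H(f_{i,l})$ together with the matching fact $f_{i,l}.\bbb_X=\kappa_\H^\H(f_{i,l})$: one must verify that the \emph{same} augmentation $\kappa_\H^\H(f_{i,l})$ governs both the vanishing detected by $\kappa_\H^\G$ and the collapse inside the tensor product. Everything else --- the reduction to $F\tens \bbb_X$, surjectivity, and the final application of Sauer's criterion --- is formal once the module structures, in particular the action of $L^\infty(X)\subseteq \CC\H$ on the trivial module $L^\infty(X)$, have been pinned down.
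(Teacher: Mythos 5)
Your proof is correct, and it takes a genuinely more direct route than the paper's. The paper applies $\CC\G\otimes_{\CC\H}-$ to the augmentation sequence $0\to\ker\kappa_{\H}^{\H}\to\CC\H\to L^{\infty}(X)\to 0$, invokes Proposition \ref{prop:dimflatness} to see that the resulting top row is $\dim_{L^{\infty}(X)}$-exact, and then uses the 5-lemma for dimension isomorphisms to reduce the whole statement to showing that $\mult\colon\CC\G\otimes_{\CC\H}\ker\kappa_{\H}^{\H}\to\ker\kappa_{\H}^{\G}$ is a $\dim_{L^{\infty}(X)}$-isomorphism. You bypass both the 5-lemma and the dimension-flatness of $\CC\H\subseteq\CC\G$ entirely: the observation that every tensor collapses to $F\otimes\bbb_X$ (valid, since $g.\bbb_X=\kappa_{\H}^{\H}(g)=g$ for $g\in L^{\infty}(X)$) makes surjectivity of the composite immediate, and lets you run Sauer's criterion directly on its kernel. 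The two key technical inputs are the same in both arguments --- the Decomposition Lemma \ref{lma:decomposition} and the intertwining identity of Lemma \ref{kappa-lem} --- and your derivation of $\kappa_{\H}^{\H}(f_{i,l})=0$ from disjointness of the $E_{(i,l),1}$ and injectivity of $s$ on them coincides with the paper's; your extra step of transporting that vanishing back into the tensor product via $f_{i,l}.\bbb_X=\kappa_{\H}^{\H}(f_{i,l})$ is exactly what replaces the paper's appeal to Proposition \ref{prop:dimflatness} in the injectivity part. What your version buys is independence from that proposition (the most involved result of Section \ref{sec:applicationstogroupoids}); what the paper's version buys is the slightly finer identification of $\ker\kappa_{\H}^{\G}$ with $\CC\G\otimes_{\CC\H}\ker\kappa_{\H}^{\H}$ up to dimension. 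One point worth making explicit if you write this up: the composite is only well defined on the balanced tensor product because $\kappa_{\H}^{\G}(f\ast h\ast g)=\kappa_{\H}^{\G}(f\ast\kappa_{\H}^{\H}(h\ast g))$ for $h\in\CC\H$, which is the identity established in the proof of Proposition \& Definition \ref{prop-defi-ting} together with the fact that $\kappa_{\H}^{\G}$ restricted to $\CC\H$ agrees with $\kappa_{\H}^{\H}$.
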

For the proof we need the following observation.

\begin{lem}\label{kappa-lem}
For all $i\in \NN$, $l\in I_i$ and $f\in \CC\H$ we have $\kappa_{\H}^{\G}(\bbb_{E_{(i,l),1}}\ast f )=\bbb_{E_{(i,l),1}}\ast \kappa_{\H}^{\H}(f)$, where both expressions are considered as functions on $\G$.
\end{lem}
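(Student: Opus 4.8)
The plan is to prove the identity by a direct pointwise computation, evaluating both sides as functions on $\G$ and checking that they agree almost everywhere. The computation rests on two elementary observations: first, that $\kappa_\H^\H$ is the augmentation (fiber-sum) map on $\CC\H$, supported on the units; and second, that convolution by $\bbb_{E_{(i,l),1}}$ interacts transparently with the orbit decomposition $E_i^x = t^{-1}(x)\cap E_i$ used in the definition of $\kappa_\H^\G$.

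First I would unwind the right-hand side. By Remark \ref{rmk:augmentationmap} the map $\kappa_\H^\H$ is the usual augmentation map, so $\kappa_\H^\H(f)$ is supported on the units $\id_x$ with $\kappa_\H^\H(f)(\id_x)=\sum_{\beta\in t^{-1}(x)\cap\H} f(\beta)$. Since $\id_x$ is a unit, convolving with $\bbb_{E_{(i,l),1}}$ merely transports this value onto the arrows of $E_{(i,l),1}$: a direct expansion of the convolution gives that $\bbb_{E_{(i,l),1}}\ast\kappa_\H^\H(f)$ is supported on $E_{(i,l),1}$ and takes the value $\kappa_\H^\H(f)(\id_{s(\alpha_{i,y})})=\sum_{\beta\in t^{-1}(s(\alpha_{i,y}))\cap\H}f(\beta)$ at the arrow $\alpha_{i,y}\in E_{(i,l),1}$. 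Here I use that $s$ is injective on $E_{(i,l),1}$, so each such arrow is $\alpha_{i,y}$ for a unique $y\in t(E_{(i,l),1})$.

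Next I would compute the left-hand side. Set $g:=\bbb_{E_{(i,l),1}}\ast f$. Using injectivity of $s$ on $E_{(i,l),1}$, one sees that $g(\gamma)=f(\alpha_{i,t(\gamma)}^{-1}\gamma)$ whenever $t(\gamma)\in t(E_{(i,l),1})$ and $\alpha_{i,t(\gamma)}^{-1}\gamma\in\H$, while $g(\gamma)=0$ otherwise; in particular $\supp g$ is contained in the union of the orbits $E_i^{y}$ over $y\in t(E_{(i,l),1})$. Now I apply $\kappa_\H^\G$: it is supported on the representatives $\alpha_{k,z}$, with $\kappa_\H^\G(g)(\alpha_{k,z})=\sum_{\delta\in E_k^z}g(\delta)$. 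Since for a fixed target the orbits $E_k^z$ partition the fiber $t^{-1}(z)$, the orbit $E_k^z$ meets $\supp g$ only when $(k,z)=(i,y)$ for some $y\in t(E_{(i,l),1})$; and there $\sum_{\delta\in E_i^y}g(\delta)=\sum_{\beta\in t^{-1}(s(\alpha_{i,y}))\cap\H}f(\beta)$, upon reindexing the orbit $E_i^y=\alpha_{i,y}\cdot(t^{-1}(s(\alpha_{i,y}))\cap\H)$ by $\beta=\alpha_{i,y}^{-1}\delta$. This is exactly the value found for the right-hand side, so the two functions coincide.

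The computation is mechanical once the correct bookkeeping is in place, and I expect the only genuine subtlety to be the support analysis for $\kappa_\H^\G$: one must argue that no representative other than the $\alpha_{i,y}$ with $y\in t(E_{(i,l),1})$ contributes, which relies precisely on the fact that the sets $E_k^z=t^{-1}(z)\cap E_k$ are the distinct right-$\H$-orbits and hence, for a fixed target, are pairwise disjoint. With this in hand, both sides are seen to be supported on $E_{(i,l),1}$ and to agree arrow-by-arrow, completing the proof.
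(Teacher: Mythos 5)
Your computation is correct, and it fills in exactly the "direct computation" that the paper omits for this lemma: both sides are supported on $E_{(i,l),1}$ and take the value $\sum_{\beta\in t^{-1}(s(\alpha_{i,y}))\cap\H}f(\beta)$ at $\alpha_{i,y}$, with the key support analysis (that only the orbits $E_i^y$, $y\in t(E_{(i,l),1})$, contribute) handled properly via the disjointness of the right-$\H$-orbits within each fiber. No gaps.
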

The proof of Lemma \ref{kappa-lem} is a direct computation and we omit the details.

\begin{proof}[Proof of Proposition \ref{the-first-iso}]
Consider the augmentation map $\kappa_{\mathscr{H}}^{\mathscr{H}} \colon \mathbb{C} \mathscr{H} \rightarrow L^{\infty} (X)$ (See remark \ref{rmk:augmentationmap}). This fits into a short ($\dim_{\mathbb{C}}$-)exact sequence
\begin{equation}
0\To \underbrace{ \ker \kappa_{\mathscr{H}}^{\mathscr{H}}}_{=:K} \overset{\iota}{\To}  \mathbb{C}\mathscr{H} \overset{{\kappa_{\mathscr{H}}^{\mathscr{H}}}}{\To} L^{\infty}(X) \To 0. \nonumber
\end{equation}
Applying the functor $\mathbb{C}\mathscr{G}\otimes_{\mathbb{C}\mathscr{H}} -$ to this short exact sequence, we obtain the following commutative diagram in which the  upper sequence is $\dim_{L^\infty(X)}$-exact by Proposition \ref{prop:dimflatness} and the lower one is  ($\dim_{\mathbb{C}}$-)exact:
\begin{displaymath}
\xymatrix{ 0 \ar[r] & \mathbb{C}\mathscr{G}\otimes_{\mathbb{C}\mathscr{H}} K \ar[r]^{\id\tens\iota} \ar[d]^{\textrm{mult}} & \mathbb{C}\mathscr{G} \otimes_{\mathbb{C}\mathscr{H}} \mathbb{C}\mathscr{H} \ar[r]^{\id\otimes \kappa_{\mathscr{H}}^{\mathscr{H}}} \ar[d]^{\wr}_{\textrm{mult}} & \mathbb{C}\mathscr{G} \otimes_{\mathbb{C}\mathscr{H}} L^{\infty}(X) \ar[r] & 0 \\ 0 \ar[r] & \ker \kappa_{\mathscr{H}}^{\mathscr{G}} \ar[r] & \mathbb{C}\mathscr{G} \ar[r]^{\kappa_{\mathscr{H}}^{\mathscr{G}}} & \mathbb{C} \left[ \mathscr{G}/\mathscr{H} \right] \ar[r] & 0 }
\end{displaymath}
It is easy to see that the composition in the statement fits into this commutative diagram, so by the 5-lemma for dimension-isomorphisms \cite[page 3]{sauer-betti-of-groupoids}
it suffices to see that $\operatorname{mult} \colon \mathbb{C}\mathscr{G} \otimes_{\mathbb{C}\mathscr{H}} K \rightarrow \ker \kappa_{\mathscr{H}}^{\mathscr{G}}$ is a $\dim_{L^{\infty}(X)}$-isomorphism.
But this follows directly from the decomposition lemma   
and Sauer's local criterion: Take $f\in \ker \kappa_{\mathscr{H}}^{\mathscr{G}}$ and $\varepsilon > 0$ and choose $Y\subseteq X$ with $\mu(Y)\geq 1-\varepsilon$ such that $\bbb_Y*f$ is on $\mathscr{H}$-reduced form:
\[
\bbb_Y\ast f=\sum_{(i,l)\in D} \bbb_{E_{(i,l),1}}\ast f_{i,l} \quad \text{and} \quad t(\supp(f_{i,l}))\subseteq s(E_{(i,l),1}).
\]
Using Lemma \ref{kappa-lem} we now get
\begin{align*}
0=  \kappa_{\H}^{\G}(\bbb_{Y}* f)
= \kappa_{\H}^{\G}\left(  \sum_{(i,l)\in D} \bbb_{E_{(i,l),1}}\ast f_{i,l} \right)
=\sum_{(i,l)\in D} \bbb_{E_{(i,l),1}}\ast \kappa_{\H}^{\H}(f_{i,l}),   
\end{align*}
and as the $E_{(i,l),1}$'s are disjoint this implies $\bbb_{E_{(i,l),1}}\ast \kappa_{\H}^{\H}(f_{i,l})=0$ for all $(i,l)\in D$.  But since $t(\supp(f_{i,l}))\subseteq s(E_{(i,l),1})$ we get $\supp(\kappa_{\H}^{\H}(f_{i,l}))\subseteq s(E_{(i,l),1})$ and hence $\kappa_{\H}^{\H}(f_{i,l})=0$ for all $(i,l)\in D$; i.e.~$f_{i,l}\in K$. Thus
\[
\bbb_Y\ast f = \mult\left( \sum_{(i,l)\in D} \bbb_{E_{(i,l),1}}\otimes f_{i,l}\right)\in \mult(\CC\G\otimes_{\CC\H} K).
\]
This proves that $\mult$ is $\dim_{L^\infty(X)}$-surjective. That it is also $\dim_{L^\infty(X)}$-injective is clear since $\ker(\mult)$ is contained in the zero-dimensional module $\ker(\id\tens\iota)$.
\end{proof}

\section{From amenability to dimension flatness}\label{sec:amenability-to-dim-flat}
As mentioned in the introduction, we are interested in the dimension flatness of inclusions of the form $\CC[\Gamma] \subseteq L(\Gamma)$ for a discrete group $\Gamma$,  and, as was proven by L{\"u}ck \cite{Luck98}, this inclusion is dimension-flat if $\Gamma$ is amenable. 
More generally, we may ask for which subalgebras $R$ of $L(\Gamma)$ the inclusion $R\subseteq L(\Gamma)$ is dimension flat. In this section we provide partial  answers for subalgebras of $L(A_0\wr\Gamma)$ when $A_0$ is a finite cyclic group and $\Gamma$ is amenable.

\subsection{On wreath products with finite cyclic groups}\label{wreath-subsection}
Consider a finite cyclic group $A_0$ and put $A=\oplus_{\Gamma} A_0$. Recall that the \emph{wreath product } $A_0\wr \Gamma$ is defined as the semi-direct product $A\rtimes \Gamma$ where $\Gamma$ acts on $A$ by translations in the $\Gamma$-direction. 
Denote by $\hat{A}$ the  Pontryagin dual $\Pi_\Gamma \hat{A}_0$ of $A$,  and recall that the topology on $\hat{A}$ is generated by sets of the form
$\prod_{\gamma \in \Gamma} U_\gamma$,
where $U_\gamma \subseteq  \hat{A}_0$ and $U_\gamma= \hat{A}_0$ for all but finitely many $\gamma \in \Gamma$. By Tychonoff's theorem this turns $\hat{A}$ into a compact Hausdorff topological group, and by discreteness of $A_0$ the open sets in the canonical basis are all compact open. Note also that the compactness of $\hat{A}$ implies that every compact open set is a finite union of compact open sets from the basis. 
We denote by $\B_\co$ the family of compact open Borel subsets and by $\CC[\B_\co]$ the algebra generated by the corresponding indicator functions in $L^\infty(\hat{A})$. 
We briefly pause to remind the reader of the standard fact  that the algebra $\CC[\B_\co]$ exactly corresponds to the group algebra $\CC[A]$ under the Fourier transform:

\begin{lem}\label{basis-set-lem}
The Fourier transform $\F\colon L(A)\simeq L^\infty(\hat{A})$ maps $\CC[A]$ onto the subalgebra $\CC[\B_\co]$ generated by characteristic functions arising from compact open subsets in $\hat{A}$.

\end{lem}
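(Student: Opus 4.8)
The plan is to verify the statement by unwinding the definitions of the Pontryagin duality and the Fourier transform on the locally compact abelian group $A=\oplus_\Gamma A_0$, and then matching up the canonical basis of idempotents on each side. Concretely, recall that since $A$ is discrete and abelian, its group von Neumann algebra $L(A)$ is canonically isomorphic (via the Fourier/Gelfand transform $\F$) to $L^\infty(\hat{A})$, where $\hat{A}=\prod_\Gamma \hat{A_0}$ carries the Haar probability measure $\m$. Under this isomorphism the canonical unitaries $u_a\in L(A)$, $a\in A$, are sent to the evaluation characters $\hat{A}\ni\chi\mapsto \chi(a)$. The group algebra $\CC[A]$ is by definition the linear span of the $u_a$, so $\F(\CC[A])$ is the span of these characters, i.e.\ the algebra of trigonometric polynomials on $\hat{A}$.

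First I would reduce to a single factor computation. Because $A_0$ is a finite cyclic group, $\hat{A_0}$ is also finite cyclic (of the same order), and the $|A_0|$ characters of $A_0$ form a basis for the functions on the finite set $\hat{A_0}$ which is orthonormal with respect to counting/Haar measure. The key finite-dimensional observation is that the indicator function $\bbb_{\{\psi\}}$ of any single point $\psi\in\hat{A_0}$ is a $\CC$-linear combination of characters of $A_0$ (this is the inverse discrete Fourier transform), and conversely each character is supported on all of $\hat{A_0}$, so the algebra generated by the characters of $A_0$ coincides with the full function algebra $\CC^{\hat{A_0}}=\spann_\CC\{\bbb_{\{\psi\}}:\psi\in\hat{A_0}\}$. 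Since every subset of the finite set $\hat{A_0}$ is compact open, this identifies $\F(\CC[A_0])$ with $\CC[\B_\co(\hat{A_0})]$ in the base case.

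Next I would globalize this to the infinite product. An element $a\in A=\oplus_\Gamma A_0$ has finite support $S\subseteq\Gamma$, so the character $\chi\mapsto\chi(a)$ depends only on the coordinates in $S$ and is thus a finite product of the one-coordinate characters treated above. Hence $\F(\CC[A])$ is the span of functions that are cylindrical (depending on finitely many coordinates) and, in each such coordinate, are $\CC$-combinations of characters of $A_0$. By the base case, any such cylinder function built from characters in finitely many coordinates equals a $\CC$-combination of indicator functions $\bbb_{\prod_\gamma E_\gamma}$ where each $E_\gamma\subseteq\hat{A_0}$ and $E_\gamma=\hat{A_0}$ for all but finitely many $\gamma$; these are precisely the compact open sets in the canonical basis of $\hat{A}$ described before the lemma. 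Conversely, by the remark preceding the lemma that every compact open set is a finite union of basis sets, $\CC[\B_\co]$ is generated by exactly these basis indicators, and each basis indicator lies in $\F(\CC[A])$ by inverting the finite Fourier transform in each relevant coordinate. This gives the two inclusions $\F(\CC[A])\subseteq \CC[\B_\co]$ and $\CC[\B_\co]\subseteq\F(\CC[A])$, and hence equality.

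The argument is essentially bookkeeping, so I do not anticipate a serious mathematical obstacle; the part that needs the most care is the interface between the algebraic Fourier transform on $\CC[A]$ and the measure-theoretic/$C^*$-algebraic identification $L(A)\simeq L^\infty(\hat{A})$, in particular checking that the finitely-supported cylinder structure of characters on $\oplus_\Gamma A_0$ matches the ``$U_\gamma=\hat{A_0}$ for all but finitely many $\gamma$'' description of the topological basis of $\hat{A}=\prod_\Gamma\hat{A_0}$, and that passing to finite unions (to capture all of $\B_\co$, not just the basis) stays inside the algebra generated by the basis indicators. Since the algebra generated by the basis indicators is closed under finite intersections and complements, it automatically contains all finite unions, so this last point is immediate once the base case and the cylinder reduction are in place.
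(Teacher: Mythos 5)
Your argument is correct, and it is the standard one: reduce to a single finite cyclic factor, where finite Fourier inversion shows the characters of $A_0$ span all functions on $\hat{A_0}$, then globalize via cylinder functions and observe that the algebra generated by the basis indicators already contains all finite unions. The paper states this lemma as a standard fact and gives no proof, so your write-up simply supplies the omitted bookkeeping, and it does so accurately.
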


For an amenable group $\Gamma$ it was proven by L{\"u}ck in  \cite{Luck98} that the inclusion $\CC\Gamma\subseteq L\Gamma$ is dimension flat. Since $A_0$ is a finite cyclic group also the wreath product  $A_0\wr \Gamma$ is amenable and the inclusion $\CC[A_0\wr \Gamma]\subseteq L(A_0\wr \Gamma)$ is therefore dimension flat as well. In the dual picture, this corresponds to dimension flatness of the inclusion $\CC[\B_\co] \rtimes \Gamma \subseteq L^\infty(\hat{A})\rtimescom \Gamma$. 
In the following we show that this is also the case for crossed products arising from other Boolean algebras than $\B_\co$. These results, however, are most naturally formulated in a general measure space theoretic setting, so we abandon the particular space $\hat{A}$ for the moment and consider instead an abstract standard Borel probability space. Before entering the discussion regarding dimension flatness let us fix a bit of notation.
\begin{defi}\label{boolian-defi}
Let $(X,\mu)$ be a standard, non-atomic Borel probability space. Denote by $\B_\all$ the system of all Borel subsets of $X$ and by $\B_\co$ the system of sets which are both open and closed. For any system $\B$ of Borel sets in $X$ which is stable under taking complements and finite intersections we denote by $\CC[\B]$ the linear span of the the indicator functions arising from $\B$.
\end{defi}
\begin{rem}
Note that when $\B$ is stable under complements and finite intersections then $\CC[\B]$ is a $*$-subalgebra of $L^\infty(X)$. Note also that both $\B_\all$ and $\B_\co$ have this property.
\end{rem}

\begin{thm}\label{commutative-dim-flatness}
Let $\B$ be a system of Borel sets in $X$ which is stable under complements and finite intersections and with the property that for any $\varps>0$ and any $A\in \B_\all$ there exists $B\in \B$ such that $\mu(A\triangle B)<\varps$. Then the inclusion $\CC[\B]\subseteq L^\infty(X)$  dimension flat.
\end{thm}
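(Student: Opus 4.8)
The plan is to establish the much stronger fact that $\CC[\B]\subseteq L^\infty(X)$ is \emph{honestly} flat, from which dimension flatness follows for free. Here the implicit ground algebra in the dimension-flatness notion is $M=\CC$, so the assertion to be proved is that $L^\infty(X)\otimes_{\CC[\B]}-$ carries (honestly) exact sequences of $\CC[\B]$-modules to $\dim_{L^\infty(X)}$-exact sequences; an exact functor obviously does this, so it suffices to prove plain flatness.

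First I would pin down the ring structure of $\CC[\B]$. Because $\B$ is stable under complements and finite intersections, the idempotents $\bbb_B\ (B\in\B)$ form a Boolean algebra and every $f\in\CC[\B]$ is a $\B$-simple function: given a representation $f=\sum_i d_i\bbb_{B_i}$, passing to the finite Boolean subalgebra generated by the $B_i$ produces pairwise disjoint atoms $A_1,\dots,A_n\in\B$ with $f=\sum_k c_k\bbb_{A_k}$. Setting $g:=\sum_{c_k\neq0}c_k^{-1}\bbb_{A_k}\in\CC[\B]$ we get $f=f^2 g$, so $\CC[\B]$ is von Neumann regular; equivalently, with $A:=\bigcup_{c_k\neq0}A_k$ one checks $(f)=(\bbb_A)$, and more generally every finitely generated ideal of $\CC[\B]$ is generated by a single idempotent $\bbb_A$ with $A\in\B$, hence is a direct summand.

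Second I would invoke the standard ring-theoretic fact that over a (commutative) von Neumann regular ring every module is flat. Applied to $\CC[\B]$ this shows that $L^\infty(X)$ is a flat $\CC[\B]$-module, so $L^\infty(X)\otimes_{\CC[\B]}-$ is exact and the inclusion is $\abflat{\CC}{L^\infty(X)}$. Equivalently, in the language of Proposition \ref{dim-flatness-eq-prop} (with $M=\CC$, for which compatibility is automatic) one has $\Tor_k^{\CC[\B]}(L^\infty(X),K)=0$ honestly for all $k\geq1$ and all $K$, whence all the relevant $\dim_{L^\infty(X)}\Tor_k$ vanish.

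I do not expect a serious obstacle; the only genuine content is the algebraic observation that $\CC[\B]$ is von Neumann regular, which rests on the common-refinement decomposition above. The one pitfall I would consciously avoid is trying to run the Sauer-criterion argument used elsewhere in the paper: for $\xi=\sum_r f^{(r)}\otimes x^{(r)}\in\ker(\id\otimes\iota)$ one would want a projection $\bbb_Y\in L^\infty(X)$ of large trace with $\bbb_Y\xi=0$, but since each $f^{(r)}$ ranges over all of $L^\infty(X)$ it cannot in general be replaced by a $\B$-simple function after cutting by a large-measure set (the rank metric detects supports rather than sup-norm, so a uniform approximation is of no help). The von Neumann regularity route bypasses this difficulty completely. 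I would also remark that the density hypothesis on $\B$ is not needed for this particular statement; it becomes relevant only afterwards, via Remark \ref{non-dense-inclusion}, where it guarantees that $\CC[\B]$ generates $L^\infty(X)$ as a von Neumann algebra.
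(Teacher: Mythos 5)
Your argument is correct, but it is genuinely different from the one in the paper. The paper does not touch von Neumann regularity at all: it verifies that $\CC[\B]\subseteq L^\infty(X)$ satisfies the strong F{\o}lner condition of Alekseev--Kyed, by refining the finite partition on which given $T_1,\dots,T_r\in\CC[\B]$ are constant into sets from $\B$ of measure approximately $1/q_n$ and checking that the associated compressed states converge in norm to the trace; dimension flatness is then quoted from \cite[Theorem 4.4]{amenability-and-vanishing}. Your route --- every $f\in\CC[\B]$ is a $\B$-simple function, hence satisfies $f=f^2g$ for a suitable $g\in\CC[\B]$, so $\CC[\B]$ is von Neumann regular and every module over it (in particular $L^\infty(X)$) is flat --- is shorter, purely algebraic, and yields the strictly stronger conclusion that $\Tor_k^{\CC[\B]}(N,K)$ vanishes identically for every ring $N\supseteq\CC[\B]$ and all $k\geq 1$; since the relevant notion here is $\abflat{\CC}{L^\infty(X)}$ (the only von Neumann subalgebra of $\CC[\B]$ available as ground algebra is $\CC$, for which compatibility and the identification of $\dim_\CC$-exactness with exactness are automatic), honest flatness does imply the theorem. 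You are also right that your proof uses neither the approximation hypothesis on $\B$ nor non-atomicity of $(X,\mu)$, whereas the paper's proof needs both (weak density of $\CC[\B]$ in $L^\infty(X)$ is built into the F{\o}lner framework, and the construction of the almost equi-measure sets $G^{(n)}_j$ uses that $\mu$ is diffuse). What the F{\o}lner approach buys instead is uniformity with the genuinely noncommutative case: the same machinery gives dimension flatness of $L^\infty(X)\rtimes\Gamma\subseteq L^\infty(X)\rtimescom\Gamma$ for amenable $\Gamma$, which is the other input to Theorem \ref{amenability-to-dim-flat-thm} and where von Neumann regularity is unavailable.
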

To prove Theorem \ref{commutative-dim-flatness} we will show that the inclusion $\CC[\B]\subseteq L^\infty(X)$ satisfies the strong F{\o}lner condition from \cite{amenability-and-vanishing}, and dimension flatness then follows from  \cite[Theorem 4.4]{amenability-and-vanishing}. For the convenience of the reader, we briefly recall the strong F{\o}lner condition before giving the proof of Theorem \ref{commutative-dim-flatness}. A weakly dense $*$-subalgebra $\mathcal{A}$ in a finite tracial von Neumann algebra $(M,\tau)$ is said to satisfy the strong F{\o}lner condition (see \cite[Proposition 3.3.]{amenability-and-vanishing}) if the following holds: For any $T_1,\dots, T_r \in \mathcal{A}$ there exists a sequence $\mathcal{S}_n\subseteq \mathcal{P}_n$ of non-zero finite dimensional subspaces in $\mathcal{A}$  such that the following holds
\begin{itemize}
\item[(i)] For every $i\in \{1,\dots, r\}$ and every $n\in \NN$ we have $T_i(\mathcal{S}_n)\subseteq \mathcal{P}_n$.
\item[(ii)] $\lim_{n\to \infty} \frac{\dim_\CC(\mathcal{S}_n)}{\dim_{\CC}(\mathcal{P}_n)}=1$.
\item[(iii)] The sequence of states $\varphi_{\mathcal{P}_n}\colon M\to \CC$ given by $\varphi_{\mathcal{P}_n}(T)=\frac{\Tr(P_nTP_n )}{\dim_\CC(\mathcal{P}_n)}$ converges in norm to the trace $\tau$. Here $P_n$ denotes the projection onto the subspace $\mathcal{P}_n$ and $\Tr$ denotes the semifinite trace in $B(L^2(M,\tau))$.
\end{itemize}

\begin{proof}[Proof of Theorem \ref{commutative-dim-flatness}]
First note that the assumption that every Borel set can be approximated arbitrarily well in measure by a set from $\B$ implies that $\CC[\B]$ is strongly dense in $L^\infty(X)$. To see this, it is enough to show that every projection $\bbb_{F}\in L^\infty(X)$ is in the strong operator closure of $\CC[\B]$. But the assumption on $\B$ implies that we can find a sequence of projections $\bbb_{F_{n}}\in \CC[\B]$ converging in 2-norm to $\bbb_F$, and since the strong operator topology coincides with the 2-norm topology on the unit ball of $L^\infty(X)$, it follows that $\CC[\B]$ is strongly dense in $L^\infty(X)$. Thus, we are in the setup from \cite{amenability-and-vanishing} and we now prove that the inclusion $\CC[\B]\subseteq L^\infty(X)$ satisfies the strong F{\o}lner condition

Let $T_1,\dots, T_r\in \CC[\B]$ be given  and assume, without loss of generality, that $\|T_i\|_\infty\leqslant 1$. Choose a sequence $\delta_n\in ]0,1]$ converging to zero.  
Since $\B$ is stable under finite intersections we can find a partition $F_1,\dots, F_s \in \B $ of $X$ such that each $F_i$ has positive measure and such that $T_1,\dots, T_r \in \spann_\CC\{\bbb_{F_i}\mid 1\leqslant i\leqslant s\}$.  Now choose, for each $i\in\{1,\dots, s\}$, a positive rational number $p_n^{(i)}/q_n$ such that $0\leq \mu(F_i)-p_n^{(i)}/q_n:=\delta_n^{(i)} <{\delta_n}/{2s}$ (we choose a common denominator right away)
as well as  a Borel set $H_1^{(i,n)}\subseteq F_i$ of measure $1/q_n$. 
By the assumptions made on $\B$, we can find $G_1^{(i,n)}\in \B$ such that $\mu(H_1^{(i,n)}\triangle G_1^{(i,n)})<\delta_n^{(i)}/p_nq_n$, where $p_n$ denotes the sum $\sum_{i=1}^{s} p_{n}^{(i)}$.  Upon replacing $G_1^{(i,n)}$ with $G_1^{(i,n)}\cap F_i$ we may furthermore assume that $G_1^{(i,n)}\subseteq F_i$. Moreover, since $\mu(H_{1}^{(i,n)})=1/q_n$ we have
\[
\left|\mu(G_{1}^{(i,n)})-\frac{1}{q_n} \right |<\frac{\delta_n^{(i)}}{p_nq_n}
\]
and hence
\[
\mu(F_i\setminus G_1^{(i,n)})\geq \mu(F_i)-1/q_n -\delta_n^{(i)}/p_nq_n =\delta_n^{(i)}-\delta_n^{(i)}/p_nq_n +(p_n^{(i)}-1)/q_n \geq (p_n^{(i)}-1)/q_n.
\]
So, if $p_n^{(i)}>1$ we can repeat the construction with $F_i$ replaced by $F_i\setminus G_1^{(i)}$ and iterating this process we obtain $p_n^{(i)}$ disjoint subset $G_1^{(i,n)},\dots ,G_{p_{n}^{(i)}}^{(i,n)}\in \B $ of $F_i$ such that
\[
\mu(F_i\setminus \cup_{j=1}^{p_n^{(i)}} G_{j}^{(i,n)} )\leq \mu(F_i)-p_n^{(i)}(1/q_n + \delta_n^{(i)}/p_nq_n)\leq 2\delta_n^{(i)}\leq \delta_n/s.
\]
Relabeling the $G_j^{(i,n)}$'s as $G_1^{(n)},\dots, G_{p_n}^{(n)}$ (same $p_n$ as above) and denoting $X\setminus \cup_{j=1}^{p_n} G_j^{(n)}$ by $G_0^{(n)}$ we have now obtained a family $G_0^{(n)},\dots, G_{p_n}^{(n)}\in \B$ such that:
\begin{itemize}
\item[(i)]  $G_i^{(n)}\cap G_j^{(n)}=\emptyset $ when $i\neq j$;
\item[(ii)] For $1\leqslant i\leqslant p$ we have $|\mu(G_i^{(n)})-\frac{1}{q_n}|\leq \frac{\delta_n}{2sp_nq_n}$ 
\item[(iii)]$ \mu(G_0^{(n)}) =\sum_{i=1}^s \mu(F_i\setminus \cup_{j=1}^{p_n^{(i)}} G_{j}^{(i,n)} )  \leq\delta_n.$
\end{itemize}
Now define 
\[
\P_n=\spann_\CC \{\bbb_{G_j^{(n)}}\mid 1\leq j\leq p_n\} \subseteq \CC[\B].
\]
Since the $G_j^{(n)}$'s are disjoint this is a $p_n$-dimensional subspace and since each $G_j$ is contained in exactly one $F_i$ the operators $T_i$ map $\P_n$ into itself. To see that $\CC[\B]\subseteq L^\infty(X)$ satisfies the strong F{\o}lner condition we therefore need to see that the sequence $\varphi_{\P_n}$ converges to $\tau$ in norm. \\
Since the characteristic functions $\bbb_{G_1^{(n)}},\dots, \bbb_{G_p^{(n)}}$ are orthogonal and span $\P_n$, by normalizing them we obtain an orthonormal basis and for  $T\in (L^\infty(X))_1$ we therefore have
\begin{align*}
\left|  \tau(T)-\varphi_{\P_n}(T)     \right| &=
\left|  \int_X T\; \mathrm{d}\mu-\frac{1}{p_n}\sum_{j=1}^{p_n} \mu(G_j^{(n)})^{-1}\ip{T\bbb_{G_j^{(n)}}}{\bbb_{G_j^{(n)}}}     \right| \\
&\leq \left| \int_XT\; \mathrm{d}\mu  -\frac{1}{p_n}\sum_{j=1}^{p_n}q_n\int_X T\bbb_{G_j^{(n)}}\; \mathrm{d}\mu  \right| +\\
&+\left| \frac{1}{p_n}\sum_{j=1}^{p_n}q_n\int_X T\bbb_{G_j^{(n)}}\; \mathrm{d}\mu -  \frac{1}{p_n}\sum_{j=1}^{p_n} \mu(G_j^{(n)})^{-1}\ip{T\bbb_{G_j^{(n)}}}{\bbb_{G_j^{(n)}}}          \right| \\
&\leq \left|\int_X T\; \mathrm{d}\mu -\frac{q_n}{p_n} \int_{X\setminus G_0^{(n)}} T\; \mathrm{d}\mu \right| +\\
&+\frac{1}{p_n}\sum_{j=1}^{p_n}\left|q_n-\mu(G_j^{(n)})^{-1})\right|\left|\ip{T\bbb_{G_j^{(n)}}}{\bbb_{G_j^{(n)}}}\right|\\
&\leq \mu(G_0^{(n)}) + \left|1-\frac{q_n}{p_n}\right| +\frac{1}{p_n}\sum_{j=1}^{p_n}\left| q_n-\mu(G_j^{(n)})^{-1}\right| \left\|\bbb_{G_{j}^{(n)}}\right\|^2_2\\
&\leq \delta_n + \left|1-\frac{q_n}{p_n}\right| +\frac{q_n}{p_n}\sum_{j=1}^{p_n}\left| \mu(G_j^{(n)})-\frac{1}{q_n} \right|\\
&\leq \delta_n + \left|1-\frac{q_n}{p_n}\right| +\frac{q_n}{p_n} p_n \frac{\delta_n}{2sp_nq_n}.
\end{align*}
The latter expression is independent of $T$ and goes to zero since $\delta_n\to 0$ and $1-p_n/q_n \leqslant\delta_n/2$.
\end{proof}

\begin{rem}
Note that when $X=\hat{A}$ (the dual of the infinite torsion group $A=\oplus_\Gamma A_0$ from before) the conditions in Theorem \ref{commutative-dim-flatness} are fulfilled for every  $\B$ containing $\B_\co$ (and being stable under complements and finite intersections). This follows from the regularity of $\mu$.\\

\end{rem}

We also record the following version of the dimension  flat base change formula.

\begin{cor}\label{dim-flat-base-change-v2}
Let $\Gamma \curvearrowright (X,\mu)$ be a probability measure preserving action and suppose that $\mathscr{B}\subseteq \mathscr{B}_{\all}$ is a $\Gamma$-stable system of Borel sets satisfying the assumptions in Theorem \ref{commutative-dim-flatness}. Then
\begin{align*}
&\dim_{L^\infty(X)\rtimescom \Gamma}\Tor_p^{\CC[\B]\rtimes \Gamma}\left(L^\infty(X)\rtimescom \Gamma, K\right)= \\
&\dim_{L^\infty(X)\rtimescom \Gamma}\Tor_p^{L^\infty(X)\rtimes \Gamma}\left(L^\infty(X)\rtimescom \Gamma, (L^\infty(X)\rtimes \Gamma)\underset{\CC[\B]\rtimes \Gamma}{\tens}K\right)
\end{align*}
for every $\CC[\B]\rtimes\Gamma$-module $K$ and every $p\geq 0$.
\end{cor}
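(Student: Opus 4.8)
The plan is to deduce the statement as a direct instance of the dimension flat base change formula, Corollary~\ref{lma:flatbasechange}, applied with the identifications $M=L^\infty(X)$, $N=L^\infty(X)\rtimescom\Gamma$, $R=\CC[\B]\rtimes\Gamma$ and $S=L^\infty(X)\rtimes\Gamma$, and with the module denoted $L$ there taken to be $K$. With these choices the two sides of the conclusion $\dim_N\Tor_p^R(N,K)=\dim_N\Tor_p^S(N,S\tens_R K)$ are verbatim the two quantities appearing in the corollary, so the whole task reduces to verifying the hypotheses.

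First I would record the easy structural inputs: the inclusion $L^\infty(X)\subseteq L^\infty(X)\rtimescom\Gamma$ is a trace-preserving inclusion of finite von Neumann algebras, and $S=L^\infty(X)\rtimes\Gamma$ is an $L^\infty(X)$-compatible intermediate $*$-algebra by the first example in Remark~\ref{dim-compatible-example-rem}. The substantial input is the dimension flatness of the functor $S\tens_R-=(L^\infty(X)\rtimes\Gamma)\tens_{\CC[\B]\rtimes\Gamma}-$. For this I would first invoke Theorem~\ref{commutative-dim-flatness}, whose hypotheses on $\B$ are precisely those imposed in the present statement, to conclude that $\CC[\B]\subseteq L^\infty(X)$ is dimension flat, i.e.\ $\abflat{\CC}{L^\infty(X)}$. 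Since $\B$ is $\Gamma$-stable, the action of $\Gamma$ on $L^\infty(X)$ preserves the subalgebra $\CC[\B]$ globally, so Lemma~\ref{alg-crossed-dim-flat} (with $M=\CC$ and $N=L^\infty(X)$ there) upgrades this to dimension flatness of $\CC[\B]\rtimes\Gamma\subseteq L^\infty(X)\rtimes\Gamma$; concretely, $S\tens_R-$ carries honestly exact sequences of $\CC[\B]\rtimes\Gamma$-modules to $\dim_{L^\infty(X)}$-exact sequences of $L^\infty(X)\rtimes\Gamma$-modules.

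The point that requires care, and which I expect to be the only genuine obstacle, is the dimension-function bookkeeping. The ring $R=\CC[\B]\rtimes\Gamma$ does not contain $M=L^\infty(X)$, so it is not literally an $L^\infty(X)$-compatible intermediate algebra and the category of $R$-modules carries no intrinsic $L^\infty(X)$-dimension; in particular the hypothesis ``$S\tens_R-$ is $\abexact{L^\infty(X)}{L^\infty(X)}$'' does not even typecheck as stated. The way around this is to note that in the proof of Corollary~\ref{lma:flatbasechange} the functor $S\tens_R-$ is applied only to an honest free $R$-resolution $P_*\to K\to 0$. Thus all that is truly needed is that $S\tens_R P_*\to S\tens_R K\to 0$ be a $\dim_{L^\infty(X)}$-resolution, and since the input $P_*\to K$ is honestly exact this already follows from the weaker property $\abexact{\CC}{L^\infty(X)}$ delivered by Lemma~\ref{alg-crossed-dim-flat}, combined with right-exactness of $S\tens_R-$ and splicing $P_*$ into short exact sequences. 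Correspondingly only the $L^\infty(X)$-compatibility of $S$, and not of $R$, is used. Having recognized $S\tens_R P_*$ as a free $\dim_{L^\infty(X)}$-resolution of $S\tens_R K$, Lemma~\ref{lma:dimres} applies to the $S$-module $S\tens_R K$ and gives $\dim_N\Tor_p^S(N,S\tens_R K)=\dim_N H_p(N\tens_R P_*)=\dim_N\Tor_p^R(N,K)$, which is exactly the claimed identity.
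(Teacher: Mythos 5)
Your proof is correct and follows essentially the same route as the paper: Theorem \ref{commutative-dim-flatness} combined with Lemma \ref{alg-crossed-dim-flat} gives $\dim_{L^\infty(X)}$-flatness of $\CC[\B]\rtimes \Gamma \subseteq L^\infty(X)\rtimes \Gamma$, and then a free $\CC[\B]\rtimes\Gamma$-resolution of $K$, tensored up to $L^\infty(X)\rtimes\Gamma$, is fed into Lemma \ref{lma:dimres}. Your observation that Corollary \ref{lma:flatbasechange} cannot be invoked verbatim because $\CC[\B]\rtimes\Gamma$ does not contain $L^\infty(X)$ is precisely why the paper re-runs that corollary's argument via Lemma \ref{lma:dimres} (applied to the $L^\infty(X)$-compatible algebra $L^\infty(X)\rtimes\Gamma$) rather than citing it directly.
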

Note that the conditions in Corollary \ref{dim-flat-base-change-v2} are satisfied as soon as the system $\mathscr B$ is stable under complements, finite intersections and the $\Gamma$-action and furthermore contains a set $B$ which is neither null or co-null and for which $\bbb_B$ has full central support in $L^\infty(X)\rtimescom \Gamma$.

\begin{proof}
The assumption on $B$ implies that $\B$ satisfies the assumptions in Theorem \ref{commutative-dim-flatness} and by Lemma \ref{alg-crossed-dim-flat} the inclusion $\CC[\B]\rtimes \Gamma \subseteq L^\infty(X)\rtimes \Gamma$ is therefore $\dim_{L^\infty(X)}$-flat. 
The statement now follows from Lemma \ref{lma:dimres} in the following way: Choose a free $\CC[\B]\rtimes \Gamma$-resolution $F_*\overset{d_*}{\to} K\to 0 $. Then the induced complex
\[
L^\infty(X)\rtimes \Gamma\underset{\CC[\B]\rtimes \Gamma}{\tens} F_* \xrightarrow{\id \tens d_*} L^\infty(X)\rtimes \Gamma\underset{\CC[\B]\rtimes \Gamma}{\tens} K\To 0
\]
is  $\dim_{L^\infty(X)}$-exact, and by Lemma \ref{lma:dimres} we have
\begin{align*}
&\dim_{L^\infty(X)\rtimescom \Gamma}\Tor_p^{L^\infty(X)\rtimes \Gamma}\left(L^\infty(X)\rtimescom \Gamma, (L^\infty(X)\rtimes \Gamma)
\underset{\CC[\B]\rtimes \Gamma}{\tens} K \right) =\\
 &\dim_{L^\infty(X)\rtimescom \Gamma}H_p\left( L^\infty(X)\rtimescom \Gamma\underset{L^\infty(X)\rtimes \Gamma}{\tens} (L^\infty(X)\rtimes \Gamma)\underset{\CC[B]\rtimes \Gamma}{\tens} \ , \  \id\tens\id\tens d_*\right) = \\
&\dim_{L^\infty(X)\rtimescom \Gamma}\Tor_p^{\CC[\B]\rtimes\Gamma}\left(L^\infty(X)\rtimescom \Gamma \ , K\right).
\end{align*}

\end{proof}

\begin{thm} \label{amenability-to-dim-flat-thm}
Let $(X,\mu)$ be a standard probability space without atoms and let $\Gamma$ be an amenable group acting freely
and measure preservingly on $X$.
If $\B$ is a family of Borel subsets satisfying the assumptions in Theorem \ref{commutative-dim-flatness} then
 the inclusion $\CC[\B]\rtimes \Gamma \subseteq L^\infty(X)\rtimescom \Gamma$ is dimension flat.
\end{thm}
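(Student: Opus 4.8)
The plan is to verify directly that the weakly dense $*$-subalgebra $\CC[\B]\rtimes\Gamma$ of $L^\infty(X)\rtimescom\Gamma$ satisfies the strong F{\o}lner condition, and then to invoke \cite[Theorem 4.4]{amenability-and-vanishing}, exactly as in the proof of Theorem \ref{commutative-dim-flatness}. The new feature compared with the commutative case is that one must now absorb the group variable; the idea is that, since $\Gamma$ is amenable, one has F{\o}lner sets at one's disposal, and these can be combined with the finite-dimensional partition subspaces produced in Theorem \ref{commutative-dim-flatness}.

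More precisely, given $T_1,\dots,T_r\in\CC[\B]\rtimes\Gamma$ I would first write each $T_i=\sum_{\delta\in F}f_{i,\delta}u_\delta$ with $F\subseteq\Gamma$ finite and $f_{i,\delta}\in\CC[\B]$, and (using that $\B$ is stable under complements and finite intersections) fix a finite partition $F_1,\dots,F_s\in\B$ of $X$ such that every coefficient $f_{i,\delta}$ lies in $\spann_\CC\{\bbb_{F_a}\}$. The candidate subspaces are then of the form $\mathcal{P}_n=\spann_\CC\{\bbb_{G}u_\gamma\mid G\in\mathcal{G}_n,\ \gamma\in E_n\}$, where $E_n\subseteq\Gamma$ is a left F{\o}lner sequence and $\mathcal{G}_n\subseteq\B$ is a fine partition refining $\{F_a\}$, while the smaller space $\mathcal{S}_n$ is cut out by passing to the F{\o}lner interior in the group variable and to the interior pieces in the space variable. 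Conditions (ii) and (iii) of the strong F{\o}lner condition should then be handled as in Theorem \ref{commutative-dim-flatness} for the space part and by the F{\o}lner property of $E_n$ for the group part, the two estimates being essentially independent; the crucial point is that the coefficients $f_{i,\delta}$ are \emph{simple} (they lie in $\CC[\B]$, not in all of $L^\infty(X)$), which guarantees that each $f_{i,\delta}\bbb_{\delta G}$ is again supported on finitely many partition pieces, so that no dimension blow-up occurs.

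The main obstacle is condition (i), i.e.\ arranging $T_i(\mathcal{S}_n)\subseteq\mathcal{P}_n$ exactly. Applying $T_i$ to $\bbb_G u_\gamma$ produces terms $f_{i,\delta}\bbb_{\delta G}u_{\delta\gamma}$, so besides the harmless shift $\gamma\mapsto\delta\gamma$ in the group (controlled by $E_n$) one is confronted with the translated sets $\delta G$, and no finite partition of $X$ can be invariant under the finitely many translations $\delta\in F$. This is precisely where amenability of $\Gamma$ must be used at the level of the \emph{action} rather than just the group: by the Ornstein--Weiss theorem (equivalently, by hyperfiniteness of the orbit equivalence relation) the free, measure-preserving action admits F{\o}lner-shaped Rokhlin towers, and refining such a tower by $\{F_a\}$ yields partitions $\mathcal{G}_n$ that are almost invariant under $F$; here one uses that $\B$ is $\Gamma$-invariant, which is anyway needed for $\CC[\B]\rtimes\Gamma$ to make sense. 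For pieces $G$ in the interior of the tower the translate $\delta G$ is again a union of partition pieces, so that $T_i(\bbb_G u_\gamma)\in\mathcal{P}_n$, whereas the boundary pieces, together with the group-boundary of $E_n$, have vanishing relative measure and are discarded into $\mathcal{P}_n\setminus\mathcal{S}_n$, keeping $\dim_\CC\mathcal{S}_n/\dim_\CC\mathcal{P}_n\to 1$. I expect the bookkeeping in this almost-invariance estimate---reconciling the space tower with the group F{\o}lner set so that all three conditions hold simultaneously---to be the technical heart of the argument. (Alternatively, Lemma \ref{alg-crossed-dim-flat} together with Theorem \ref{commutative-dim-flatness} and Corollary \ref{cor:flatspec} shows that $\CC[\B]\rtimes\Gamma\subseteq L^\infty(X)\rtimes\Gamma$ is $\abflat{L^\infty(X)}{L^\infty(X)}$, so by the base-change formula of Corollary \ref{dim-flat-base-change-v2} it would suffice to treat the single inclusion $L^\infty(X)\rtimes\Gamma\subseteq L^\infty(X)\rtimescom\Gamma$; this isolates the use of amenability but does not circumvent the Ornstein--Weiss input.)
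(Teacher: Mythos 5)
Your parenthetical ``alternative'' at the end is in fact precisely the paper's proof, and it is essentially complete as you state it: one combines Theorem \ref{commutative-dim-flatness} with Lemma \ref{alg-crossed-dim-flat} to see that $\CC[\B]\rtimes\Gamma\subseteq L^\infty(X)\rtimescom\Gamma$ is $\abflat{L^\infty(X)}{L^\infty(X)\rtimescom\Gamma}$ through the intermediate algebra $L^\infty(X)\rtimes\Gamma$, applies the base change formula of Corollary \ref{dim-flat-base-change-v2} to reduce all $\Tor$-computations to the single inclusion $L^\infty(X)\rtimes\Gamma\subseteq L^\infty(X)\rtimescom\Gamma$, and then quotes \cite[Corollary 6.6]{amenability-and-vanishing} for the dimension flatness of that inclusion when $\Gamma$ is amenable. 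The one point you miss is that this last step is already a citable black box in the very reference you are using for the strong F{\o}lner machinery, so no Ornstein--Weiss input has to be supplied by hand; the proof is then a two-line reduction. Your primary route --- verifying the strong F{\o}lner condition directly for $\CC[\B]\rtimes\Gamma$ inside $L^\infty(X)\rtimescom\Gamma$ by interleaving group F{\o}lner sets with F{\o}lner-shaped Rokhlin towers --- is a genuinely different and plausible strategy, and you correctly identify both the obstruction (no finite partition is invariant under the translations $\delta\in F$) and the standard remedy; but it amounts to reproving \cite[Corollary 6.6]{amenability-and-vanishing} simultaneously with Theorem \ref{commutative-dim-flatness}, and the simultaneous bookkeeping you defer (making the refined tower partition both almost $F$-invariant and consisting of pieces of nearly equal measure, as conditions (i)--(iii) jointly require) is substantial and is exactly what the paper's factorization is designed to avoid. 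In short: promote your parenthesis to the proof and you are done.
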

\begin{proof}
Since $\Gamma$ is amenable, by \cite[Corollary 6.6]{amenability-and-vanishing} the inclusion $L^\infty(X)\rtimes \Gamma\subseteq  L^\infty(X)\rtimescom \Gamma$ is dimension flat and applying Corollary \ref{dim-flat-base-change-v2} we obtain
\begin{align*}
&\dim_{L^\infty(X)\rtimescom \Gamma} \Tor_p^{\CC[\B] \rtimes \Gamma}(L^\infty(X)\rtimescom \Gamma, K) = \\
& \dim_{L^\infty(X)\rtimescom \Gamma} \Tor_p^{L^\infty(X) \rtimes \Gamma}(L^\infty(X)\rtimescom \Gamma, L^\infty(X)\rtimes \Gamma \underset{\CC[\B]\rtimes\Gamma}{\otimes} K)=0
\end{align*}
for any $\CC[\B]\rtimes\Gamma$-module $K$ and any $p\geq 1$.
\end{proof}

\section{From dimension  flatness to amenability}\label{sec:back-again}
The aim of this section is to prove a converse to the statement in Theorem \ref{amenability-to-dim-flat-thm}. That is, we aim to show that if $\Gamma$ is a non-amenable group then there exists a non-atomic Borel probability space $(X,\mu)$ and a free p.m.p.~action $\Gamma \curvearrowright (X,\mu)$ and a stable family $\B$ of Borel sets in $X$  such that the inclusion $\CC[\B]\rtimes \Gamma\subseteq L^\infty(X)\rtimescom \Gamma$ is not dimension flat. 
More precisely, we will show that there exists
a finite cyclic group  $A_0$ and a stable system of Borel subsets in $\hat{A}$ (as before $A$ denotes $\oplus_\Gamma A_0$ and $\hat{A}$ its Pontryagin dual) such that the inclusion $\CC[\B] \rtimes \Gamma \subseteq L^\infty(\hat{A})\rtimescom \Gamma$ is not dimension  flat (see e.g.~Definition \ref{boolian-defi} for details on the terminology and notation). The crucial ingredient is Gaboriau-Lyons' striking  ``measure theoretic converse'' to von Neumann's problem which we recapitulate in the following.
\subsection{Gaboriau-Lyons' theorem}
The main result in \cite{gaboriau-lyons} is the following
\begin{thm}[\cite{gaboriau-lyons}]\label{gaboriau-lyons-continuous}
If $\Gamma$ is a countable discrete non-amenable group then the orbit equivalence relation of the Bernoulli action $\Gamma \curvearrowright [0,1]^\Gamma$ contains the orbit equivalence relation of an essentially free action of $\FF_2$.
\end{thm}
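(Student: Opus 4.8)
This is a theorem of Gaboriau and Lyons \cite{gaboriau-lyons}, so I only sketch the strategy I would follow. The plan is to exhibit the desired free $\FF_2$-action as a subrelation of the Bernoulli orbit relation by running an invariant bond percolation on a Cayley graph of $\Gamma$ and then controlling the \emph{cost} of the resulting cluster relation. The first thing I would do is choose the graph favourably: by the theorem of Pak and Smirnova-Nagnibeda every non-amenable group admits a finite symmetric generating set $S$ whose Cayley graph has a non-degenerate non-uniqueness phase $p_c<p_u$, and the freedom to enlarge $S$ will be used again later. Fixing $p\in(p_c,p_u)$, I would realise Bernoulli($p$) bond percolation as a $\Gamma$-equivariant measurable factor of the shift $\Gamma\curvearrowright[0,1]^\Gamma$; this embeds the orbit relation of the percolation into that of $[0,1]^\Gamma$ and is exactly why the statement is phrased for the latter.

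Next I would extract a treeable, ergodic subrelation. The open edges of the percolation generate a measured subrelation $\mathcal{S}$ of the orbit relation, and in the non-uniqueness regime there are almost surely infinitely many infinite clusters. Using Lyons--Schramm indistinguishability I would argue that the restriction of $\mathcal{S}$ to the infinite clusters is ergodic, and by replacing each cluster with an invariant spanning forest (for instance the free minimal spanning forest) I would obtain an honest treeing, so that this relation is treeable without its orbits changing.

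The crux is a lower bound on the cost. For the treeing above the cost equals half the expected degree of the spanning forest, and I would bound this below using the non-amenability of the infinite clusters in the non-uniqueness phase, which forces the cost to exceed $1$; by enlarging $S$ as in the first step, so as to increase the expected degree, I would push the cost up to $\geq 2$. Hjorth's theorem---that an ergodic, treeable, probability-measure-preserving equivalence relation of cost $\geq 2$ contains the orbit relation of a free ergodic action of $\FF_2$---then finishes the argument. I expect the genuinely hard part to be precisely this quantitative cost estimate together with the treeability: it relies on the non-triviality of the non-uniqueness phase for a suitable generating set, on indistinguishability and non-amenability of the infinite clusters, and on the existence of an invariant spanning forest with controlled expected degree. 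Once cost $\geq 2$ is secured, the final passage to a free $\FF_2$-action is a black-box application of Hjorth's lemma.
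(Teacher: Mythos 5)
This statement is not proved in the paper at all: it is quoted verbatim from \cite{gaboriau-lyons}, and the authors only elaborate on the \emph{discrete-base} version (their Theorem \ref{gaboriau-lyons-discrete} and Corollary \ref{lamplighter-cor}), so there is no internal proof to compare against. Judged against the actual argument of Gaboriau--Lyons, your sketch gets the broad architecture right: Pak--Smirnova-Nagnibeda to find a generating set with $p_c<p_u$, realising Bernoulli bond percolation as an equivariant factor of $[0,1]^\Gamma$, Lyons--Schramm indistinguishability to get ergodicity of the cluster relation on infinite clusters, the free minimal spanning forest as a treeing, and finally Hjorth's lemma.

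However, the step where you pass from ``cost $>1$'' to ``cost $\geq 2$'' is wrong as stated. Enlarging the generating set $S$ is used in \cite{gaboriau-lyons} only to secure a non-degenerate non-uniqueness phase; it is \emph{not} a mechanism for driving the expected degree of the invariant spanning forest up to $4$, and there is no reason it should be: for an invariant forest the cost is half the expected degree, and (for instance for the free uniform spanning forest of the Cayley graph itself) this quantity is governed by $2+2\beta_1^{(2)}(\Gamma)$, which equals $2$ for many non-amenable groups no matter which generating set you take. The correct device is \emph{compression}: having produced an ergodic treeable subrelation $\S$ of cost $1+\varepsilon$ with $\varepsilon>0$, restrict to a subset $A$ with $\mu(A)\leq\varepsilon$ and invoke the induction formula $\operatorname{cost}(\S|_A)-1=\mu(A)^{-1}(\operatorname{cost}(\S)-1)$ to reach cost $\geq 2$ on $A$; Hjorth's lemma then produces a free ergodic $\FF_2$-action on $A$. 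Your sketch also omits the final (necessary) step of spreading this $\FF_2$-action from $A$ back to an essentially free action on all of $[0,1]^\Gamma$ with orbits still inside the Bernoulli orbit relation, which uses ergodicity of the ambient relation (and, in \cite{gaboriau-lyons}, the fact that $[0,1]^\Gamma$ contains independent copies of itself). Without these two repairs the argument does not close.
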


For our purposes the following discrete-base-space version will also turn out  relevant.
\begin{theorem}[\cite{gaboriau-lyons}]\label{gaboriau-lyons-discrete}
Let $\Gamma$ be a finitely generated non-amenable group. Then there is an $n\in \mathbb{N}$ and a non-empty open interval $(p_1,p_2)\subseteq [0,1]$ such that for every $p\in (p_1,p_2)$ there is an essentially free, ergodic action $\sigma$ of $\mathbb{F}_2$ on $\prod_1^n \left( \{ 0,1\}, \mu_p\right)^{\Gamma}$ such that the orbit equivalence relation $\mathcal{R}_{\sigma}$ is contained (almost everywhere) in the orbit equivalence relation $\mathcal{R}_{\Gamma}$ of the diagonal Bernoulli action.
\end{theorem}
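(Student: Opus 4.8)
The plan is to recapitulate the percolation-theoretic construction of Gaboriau and Lyons, which is what actually produces the free $\mathbb{F}_2$-action on a Bernoulli space with two-point base. Since $\Gamma$ is finitely generated and non-amenable, I would first invoke the theorem of Pak and Smirnova-Nagnibeda: $\Gamma$ admits a finite symmetric generating set $S$ for which Bernoulli bond percolation on the Cayley graph $\operatorname{Cay}(\Gamma,S)$ has a non-degenerate non-uniqueness phase, that is $p_c<p_u$. Set $n:=|S|/2$ (the number of positive generators) and $(p_1,p_2):=(p_c,p_u)$, which is a non-empty open interval. Identifying the edge set of $\operatorname{Cay}(\Gamma,S)$ with $\Gamma\times\{1,\dots,n\}$, the configuration space of bond percolation with parameter $p$ is exactly $\prod_1^n(\{0,1\},\mu_p)^\Gamma$ equipped with the diagonal shift, and the open-cluster structure is measurable with respect to this $\Gamma$-action. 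This is precisely why an integer $n$ and an open interval must appear in the statement: $n$ records the generating set supplied by Pak--Smirnova-Nagnibeda, and $(p_1,p_2)$ is its non-uniqueness phase.

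For each $p\in(p_1,p_2)$ there are almost surely infinitely many infinite clusters. I would then form the cluster subrelation $\mathcal{R}_{\mathrm{cl}}\subseteq\mathcal{R}_\Gamma$, declaring $\omega$ and $\gamma^{-1}.\omega$ to be related precisely when $e$ and $\gamma$ lie in the same open cluster of $\omega$, and restrict it to the $\Gamma$-invariant event of positive measure that the origin lies in an infinite cluster, obtaining the infinite-cluster relation $\mathcal{R}_\infty$. By construction $\mathcal{R}_\infty\subseteq\mathcal{R}_\Gamma$ almost everywhere. The three properties I would then establish are: (a) ergodicity of $\mathcal{R}_\infty$, which follows from the Lyons--Schramm cluster-indistinguishability theorem; (b) treeability, obtained by choosing a $\Gamma$-equivariant measurable spanning tree inside each infinite cluster (for instance via the free minimal spanning forest, or via a Borel ordering of the edges); and (c) a lower bound on the cost. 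Gaboriau's cost computations show that in the non-uniqueness phase the infinite clusters are non-amenable, which forces $\operatorname{cost}(\mathcal{R}_\infty)>1$.

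Finally I would apply Hjorth's embedding theorem: an ergodic, measure-preserving, treeable equivalence relation of cost $>1$ contains the orbit equivalence relation of a free, ergodic, measure-preserving action $\sigma$ of $\mathbb{F}_2$. If one uses the formulation of Hjorth's theorem demanding cost $\geq 2$, one simply passes to a sub-interval of $(p_c,p_u)$ or enlarges $S$ (thereby enlarging $n$) to boost the cost past $2$, as carried out by Gaboriau--Lyons. Since $\mathcal{R}_\sigma\subseteq\mathcal{R}_\infty\subseteq\mathcal{R}_\Gamma$ almost everywhere, the resulting $\sigma$ is an essentially free, ergodic action of $\mathbb{F}_2$ whose orbit relation sits inside that of the diagonal Bernoulli action, which is exactly the assertion.

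The main obstacle is step (c) combined with securing a non-empty non-uniqueness phase in the first place: everything hinges on producing, for a \emph{general} finitely generated non-amenable $\Gamma$, a Cayley graph whose percolation exhibits infinitely many infinite clusters over an open range of $p$ (Pak--Smirnova-Nagnibeda), and then on controlling the cost of the resulting treeable cluster relation sharply enough (strictly above $1$, and above $2$ if required) to feed into Hjorth's theorem. By contrast, the ergodicity and treeability of $\mathcal{R}_\infty$ are comparatively soft once the non-uniqueness phase and the measurable cluster structure are in hand.
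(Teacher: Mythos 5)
The paper does not prove this statement: Theorem \ref{gaboriau-lyons-discrete} is imported verbatim from \cite{gaboriau-lyons} and used as a black box (the only argument the authors supply in its vicinity is the derivation of Corollary \ref{lamplighter-cor} \emph{from} it, i.e.\ the passage to a cyclic base group). So there is no internal proof to compare you against; what you have written is a summary of the Gaboriau--Lyons argument itself, and as such it has the correct architecture: the Pak--Smirnova-Nagnibeda generating set with $p_c<p_u$, the identification of the bond-percolation configuration space with $\prod_1^n(\{0,1\},\mu_p)^\Gamma$ carrying the diagonal shift, the infinite-cluster subrelation, Lyons--Schramm indistinguishability for ergodicity, a treeable subrelation of cost $>1$, and Hjorth's theorem.

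Three points in your sketch are genuinely glossed over. First, the cluster relation itself is not treeable in general; Gaboriau--Lyons extract a treeable subrelation from the minimal spanning forest, which requires i.i.d.\ edge labels, i.e.\ an \emph{extension} of the system --- a deterministic Borel ordering of the edges does not produce an invariant spanning forest, and since the space in the statement is exactly the percolation configuration space, accommodating the auxiliary randomness is a real issue, not a cosmetic one. Second, the infinite-cluster relation lives only on the positive-measure set $U$ of configurations whose origin lies in an infinite cluster, whereas the theorem asserts an essentially free, ergodic action of $\FF_2$ on all of $\prod_1^n(\{0,1\},\mu_p)^\Gamma$ with orbits inside $\mathcal{R}_\Gamma$; the step of spreading the action from $U$ to the full space (using ergodicity of $\mathcal{R}_\Gamma$ to transport it) is omitted. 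Third, your proposed remedy for the case $\operatorname{cost}\in(1,2)$ --- enlarging the generating set to ``boost the cost past $2$'' --- is not sound as stated, since changing $S$ changes the cluster relation in an uncontrolled way; the standard device is compression (restricting an ergodic relation to a subset of measure $t$ rescales $\operatorname{cost}-1$ by $1/t$) combined with the extension step just mentioned, or the form of Hjorth's theorem valid for any ergodic treeable relation of cost $>1$. None of this affects the paper, which only needs the statement as quoted, but as a reconstruction of the cited proof these are the places where your outline would need to be firmed up.
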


We elaborate on the proof of \cite[Corollary  4]{gaboriau-lyons} in order to get the $\Gamma$-action in a more convenient form. First note that we have a $\Gamma$-equivariant isomorphism of measure spaces
\begin{eqnarray}
\varphi \colon \left( \{ 0,1\}^n, \mu_p^{\otimes n} \right)^{\Gamma} & \rightarrow & \prod_1^n \left( \{0,1\} , \mu_p \right)^{\Gamma} \nonumber \\
 &  & \varphi(x)(k)(\gamma) = x(\gamma)(k), \quad 1\leq k\leq n, \gamma\in \Gamma. \nonumber
\end{eqnarray}

Thus we may assume that we have our $\mathcal{R}_{\sigma}$ on $Y_0^{\Gamma}$ where $Y_0=\{0,1\}^n$ with the product measure $\mu_p^{\otimes n}$. Next we may assume that $p=\frac{s}{t}$ is a rational number, $s,t\in \mathbb{N}$. Then there is a surjective (but not necessarily injective) measure-preserving map $\psi_0 \colon \mathbb{Z}/t^n\mathbb{Z} \rightarrow Y_0$ where the domain is equipped with the equi-distributed probability measure $\nu_{t^n}$. This, in turn, then induces a measure-preserving, $\Gamma$-equivariant map $\psi \colon \left( X_0:= \mathbb{Z}/t^n\mathbb{Z} , \nu_{t^n}\right)^{\Gamma} \rightarrow (Y_0, \mu_p^{\otimes n})^{\Gamma}$.
We now use $\psi$ to pull back the action of $\mathbb{F}_2$ as follows. Write $X=X_0^{\Gamma}$, $Y=Y_0^{\Gamma}$, and $\mathbb{F}_2=\langle a,b\rangle$. Then there are measurable partitions $Y=\bigsqcup_{\gamma \in \Gamma} A_{\gamma} = \bigsqcup_{\gamma \in \Gamma} B_{\gamma}$ such that for all $\gamma \in \Gamma$ and (almost) all $y\in A_{\gamma}$ we have $a.y=\gamma.y$, and similarly for all $y\in B_{\gamma}$ we have $b.y=\gamma.y$. Now define partitions of $X$ by taking pre-images $A^o_{\gamma} := \psi^{-1}(A_{\gamma})$ and $B^o_{\gamma}:=\psi^{-1}(B_{\gamma})$. We get an action $\sigma^o$ of $\mathbb{F}_2$ on $X$ by defining $\sigma^o(a).x = \gamma.x$ for $ x\in A^o_{\gamma}$ and similarly for $b$. This clearly gives two well-defined measure-isomorphisms
since $\Gamma$ acts by measure-isomorphisms, whence an action since $\mathbb{F}_2$ is a free group. Finally, the action is essentially free since if $\sigma^o(w)\vert_{Z^o} = \id\vert_{Z^o}$ for some set $Z^o\subseteq X$ and some $w\in \mathbb{F}_2$ we would have $\sigma(w)\vert_{\psi(Z^o)}=\id\vert_{\psi(Z^o)}$ and hence $\psi(Z^o)$ has measure zero in $Y$. But $Z^o\subseteq \psi^{-1}(\psi(Z^o))$ and since $\psi$ is measure preserving, $Z^o$ must have measure zero in $X$. Note also that $\psi$ is an $\FF_2$-equivariant map by construction of the $\FF_2$-action on $X$. \\

We summarize all this as:

\begin{corollary}[{\cite{gaboriau-lyons}}]\label{lamplighter-cor}
Let $\Gamma$ be a non-amenable finitely generated group. Then there is a $k\in \mathbb{Z}$ such that the orbit equivalence relation $\mathcal{R}_{\Gamma}$ of the Bernoulli action of $\Gamma$ on $\left( \mathbb{Z}/k\mathbb{Z}, \nu_k \right)^{\Gamma}$ contains the orbit equivalence relation of an essentially free, measure-preserving action of $\mathbb{F}_2$.
\end{corollary}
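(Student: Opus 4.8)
The plan is to package the explicit construction carried out in the discussion immediately preceding the statement, so the proof is really a matter of assembling the pieces and identifying the integer $k$. First I would invoke Theorem \ref{gaboriau-lyons-discrete} to obtain the integer $n$ and the non-empty open interval $(p_1,p_2)$, and then fix a \emph{rational} parameter $p=s/t\in(p_1,p_2)$ with $s,t\in\NN$ (possible since the interval is open and non-empty). For this $p$ the theorem yields an essentially free, ergodic $\FF_2$-action $\sigma$ whose orbit equivalence relation $\mathcal{R}_\sigma$ is contained in that of the diagonal Bernoulli action on $\prod_1^n(\{0,1\},\mu_p)^\Gamma$, and via the $\Gamma$-equivariant measure isomorphism $\varphi$ I would transport everything to the space $(Y_0,\mu_p^{\otimes n})^\Gamma$ with $Y_0=\{0,1\}^n$.

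The key reduction is to replace the non-uniform finite base $(Y_0,\mu_p^{\otimes n})$ by a \emph{uniform} finite cyclic base, which is exactly what produces the cyclic group $\ZZ/k\ZZ$ demanded by the statement. Since $p=s/t$, each of the $2^n$ atoms of $\mu_p^{\otimes n}$ carries mass an integer multiple of $1/t^n$ (namely $s^{j}(t-s)^{n-j}/t^n$ for an atom with $j$ coordinates equal to $1$); hence I can build a surjective measure-preserving map $\psi_0\colon(\ZZ/t^n\ZZ,\nu_{t^n})\to(Y_0,\mu_p^{\otimes n})$ from the equidistributed space on $t^n$ points simply by partitioning $\ZZ/t^n\ZZ$ into blocks of the appropriate cardinalities. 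Taking the $\Gamma$-fold product then gives a measure-preserving, $\Gamma$-equivariant surjection $\psi\colon(X_0,\nu_{t^n})^\Gamma\to(Y_0,\mu_p^{\otimes n})^\Gamma$ with $X_0=\ZZ/t^n\ZZ$, and I would set $k=t^n$.

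Next I would pull back the action along $\psi$. Writing $\FF_2=\langle a,b\rangle$ and using the measurable partitions $Y=\bigsqcup_\gamma A_\gamma=\bigsqcup_\gamma B_\gamma$ on which $a$, respectively $b$, act as the corresponding elements of $\Gamma$, I define $A^o_\gamma=\psi^{-1}(A_\gamma)$, $B^o_\gamma=\psi^{-1}(B_\gamma)$ on $X:=X_0^\Gamma$ and set $\sigma^o(a).x=\gamma.x$ for $x\in A^o_\gamma$ and $\sigma^o(b).x=\gamma.x$ for $x\in B^o_\gamma$. Because each piece is moved by an honest element of $\Gamma$, which preserves the Bernoulli measure, the maps $\sigma^o(a)$ and $\sigma^o(b)$ are measure isomorphisms; and since $\FF_2$ is free there are no relations to check, so $\sigma^o$ is a genuine measure-preserving $\FF_2$-action, with $\psi$ automatically $\FF_2$-equivariant. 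As $\sigma^o(w).x$ always lies in the $\Gamma$-orbit of $x$ by construction, the orbit equivalence relations satisfy $\mathcal{R}_{\sigma^o}\subseteq\mathcal{R}_\Gamma$, which is precisely the asserted containment.

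The only point requiring genuine care is \emph{essential freeness} of $\sigma^o$, which could a priori be destroyed by the non-injective quotient $\psi$; I expect this to be the main (indeed essentially the only) obstacle, the remaining claims being direct bookkeeping. This is where measure-preservation is decisive: if $\sigma^o(w)$ fixed a positive-measure set $Z^o\subseteq X$ for some $w\neq e$, then $\FF_2$-equivariance of $\psi$ would force $\sigma(w)$ to fix $\psi(Z^o)\subseteq Y$ pointwise, and essential freeness of $\sigma$ would make $\psi(Z^o)$ null; since $\psi$ is measure-preserving and $Z^o\subseteq\psi^{-1}(\psi(Z^o))$, this forces $\mu(Z^o)=0$, contradicting the assumption and completing the argument.
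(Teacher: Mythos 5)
Your proposal is correct and follows essentially the same route as the paper's own argument preceding the corollary: choose a rational $p=s/t$ in the interval from Theorem \ref{gaboriau-lyons-discrete}, pass to the uniform base $\ZZ/t^n\ZZ$ via the surjection $\psi_0$, pull back the $\FF_2$-action along the induced equivariant map $\psi$, and verify essential freeness by the same measure-preservation argument. The only (welcome) addition is your explicit justification that the atoms of $\mu_p^{\otimes n}$ have masses that are integer multiples of $1/t^n$, which the paper leaves implicit.
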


\subsection{Non-dimension flatness}
In this section we show how one can obtain non-dimension flat inclusions from the Gaboriau-Lyons theorem discussed above. The main result is as follows.
\begin{thm}\label{non-flat-thm-all-borel-sets}
Let $\Gamma$ be  a finitely generated non-amenable group  and let $A_0$ be the finite cyclic group obtained from Corollary \ref{lamplighter-cor}. Denote by $A$ the direct sum $\oplus_\Gamma A_0$, by $\hat{A}$ its Pontryagin dual and by $\B_\all$ the system of all Borel subsets in $\hat{A}$. Then
the inclusion $\CC[\B_\all]\rtimes \Gamma \subseteq L^\infty(\hat{A})\rtimescom \Gamma$ is not dimension  flat.
\end{thm}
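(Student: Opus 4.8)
The plan is to find a free group inside the algebra via Gaboriau--Lyons, to detect non-flatness on the free-group side where the first $L^2$-Betti number is positive, and then to transport this failure back to $R:=\CC[\B_\all]\rtimes\Gamma$ through the rank completion. Throughout put $M:=L^\infty(\hat A)$ and $N:=L^\infty(\hat A)\rtimescom\Gamma$, and recall that $\hat A\cong(\ZZ/k\ZZ)^\Gamma$ with the $\Gamma$-Koopman action equal to the Bernoulli action of Corollary \ref{lamplighter-cor}.

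First I would record the free-group data. By Corollary \ref{lamplighter-cor} there is an essentially free, measure-preserving action $\sigma$ of $\FF_2=\langle a,b\rangle$ on $\hat A$ whose orbit relation $\mathcal R_\sigma$ is contained in $\mathcal R_\Gamma$. Choosing the measurable partitions $\hat A=\sqcup_\gamma A_\gamma=\sqcup_\gamma B_\gamma$ that implement $a$ and $b$ by $\Gamma$-translations, the Koopman unitaries become $U_a=\sum_\gamma\bbb_{A_\gamma}u_\gamma$ and $U_b=\sum_\gamma\bbb_{B_\gamma}u_\gamma$ in $N$. The decisive observation is that all coefficients $\bbb_{A_\gamma},\bbb_{B_\gamma}$ lie in $\CC[\B_\all]$, so the finite truncations $\sum_{\gamma\in F}\bbb_{A_\gamma}u_\gamma$ belong to $R$ and converge to $U_a$ in the rank metric of $M$. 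Hence $U_a,U_b$, and with them the whole $*$-subalgebra $\mathcal H_0:=\CC[\B_\all]\rtimes_\sigma\FF_2$ generated by them and by the ($\sigma$-invariant) algebra $\CC[\B_\all]$, are contained in the rank completion $c_M(R)$.

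Next I would show that $\mathcal H_0\subseteq N$ is not dimension flat. Freeness of $\FF_2$ makes the relative augmentation ideal free, so that
\[
0\To\mathcal H_0^2\xrightarrow{\ (1-U_a,\,1-U_b)\ }\mathcal H_0\xrightarrow{\ \pi\ }\CC[\B_\all]\To 0
\]
is a free resolution of the $\mathcal H_0$-module $\CC[\B_\all]$ on which $\CC[\B_\all]$ acts by multiplication and each $U_w$ by the Koopman automorphism $\sigma(w)$ (the map $\pi$ is $\xi\mapsto\xi.\bbb_{\hat A}$). Applying $N\otimes_{\mathcal H_0}-$ kills the coefficients and leaves the complex $N^2\xrightarrow{(1-U_a,1-U_b)}N$, so that $\dim_N\Tor_1^{\mathcal H_0}(N,\CC[\B_\all])=\dim_N\ker(N^2\to N)$. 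By faithful flatness of $L(\mathcal R_\sigma)\subseteq N$ (Remark \ref{non-dense-inclusion}) this dimension is unchanged when computed over $L(\mathcal R_\sigma)$, where it is precisely the first $L^2$-Betti number of $\mathcal R_\sigma$; Gaboriau's theorem on $L^2$-Betti numbers of orbit equivalence relations identifies it with $\bet_1(\FF_2)=1\neq0$. Thus $\mathcal H_0\subseteq N$ is not dimension flat by Proposition \ref{dim-flatness-eq-prop}.

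The final, and hardest, step is the transfer to $R$. Put $S:=\langle R,\mathcal H_0\rangle\subseteq N$; then $R\subseteq S\subseteq c_M(R)$, so that $c_M(R)=c_M(S)$. I would first establish a rank-completion invariance principle: for $M$-compatible intermediate $*$-algebras sharing a rank completion, the $\dim_N$ of $\Tor$ of a module over the larger algebra is the same whether computed over the smaller or the larger ring. This is proved along the lines of Corollary \ref{cor:flatspec} and Lemma \ref{lma:dimres}, using that $c_M$ is exact on $\dim_M$-exact complexes (Lemma \ref{completion-properties}(i)) and that $\id\otimes\mathfrak c$ is a $\dim_N$-isomorphism (Lemma \ref{completion-properties}(ii)), so that a free $R$-resolution and a free $S$-resolution of the same module compute the same $\dim_N$-homology after completion. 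Granting this, take the $S$-module $V_S:=S\otimes_{\mathcal H_0}\CC[\B_\all]$, resolved by $S^2\xrightarrow{(1-U_a,1-U_b)}S\to V_S\to0$; once one checks that this is a projective $\dim_M$-resolution---equivalently that the kernel $\Tor_1^{\mathcal H_0}(S,\CC[\B_\all])$ is $\dim_M$-negligible---Lemma \ref{lma:dimres} gives $\dim_N\Tor_1^S(N,V_S)=\dim_N\ker(N^2\to N)=1$, and the invariance principle upgrades this to $\dim_N\Tor_1^R(N,V_S)=1\neq0$. By Proposition \ref{dim-flatness-eq-prop} the inclusion $R\subseteq N$ is therefore not dimension flat. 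The genuine difficulty is concentrated here: non-flatness is witnessed by a single module and does not propagate along inclusions for free, so the argument rests entirely on the rank-completion invariance together with the verification that the displayed complex over $S$ is a $\dim_M$-resolution; the Gaboriau--Lyons input, the free-group resolution and the Betti-number computation are all either quoted or formal.
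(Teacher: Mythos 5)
Your first two steps are sound and share their engine with the paper's proof: detecting the failure on the free-group side via Gaboriau--Lyons and $\beta_1^{(2)}(\FF_2)=1$ is exactly how the paper proceeds, and your rank-completion invariance principle for $R\subseteq S\subseteq c_M(R)$ is plausibly provable from Lemma \ref{completion-properties} and Corollary \ref{lma:flatbasechange} (the point being that $S/R$ is a zero-dimensional $M$-module, so $S\otimes_R-$ is dimension exact). The genuine gap sits exactly where you flag it but do not resolve it: the assertion that $S^2\to S\to V_S\to 0$ is a projective $\dim_{M}$-resolution, i.e.\ that $\dim_{M}\Tor_1^{\mathcal H_0}(S,\CC[\B_\all])=0$ where $M=L^\infty(\hat A)$. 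This is not a routine verification; it is a relative dimension-flatness statement for the pair $\mathcal H_0\subseteq S$ of the same depth as Proposition \ref{prop:dimflatness}, one of the two main technical results of the paper. The proof of that proposition rests on the measurable coset decomposition of Section \ref{sec:groupoids}: $\CC\G$ is rank-densely a free right $\CC\H$-module on the representatives $\bbb_{E_{(i,l),1}}$, which are indicators of graphs of measurably chosen partial isomorphisms and have no reason to lie in your algebra $S=\langle R,U_a,U_b\rangle$. Nor can you import the groupoid-ring result through your invariance principle, because $\mathcal H_0$ and $\CC[\mathcal R_\sigma]$ do \emph{not} share a rank completion over $M$: $\CC[\B_\all]$ is not rank dense in $L^\infty(\hat A)$ (a bounded function with atomless distribution cannot be made simple by cutting with a projection of trace close to $1$), so $c_M(S)=c_M(R)\subsetneq c_M(L^\infty(\hat A)\rtimes\Gamma)$. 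Without the resolution being a $\dim_M$-resolution, Lemma \ref{lma:dimres} does not apply and $\dim_N\Tor_1^S(N,V_S)$ is not computed by your two-term complex, so the chain breaks before it reaches $R$.

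The paper circumvents this by a different descent mechanism. It proves dimension flatness for the full groupoid rings $\CC\H\subseteq\CC\G$ (Proposition \ref{prop:dimflatness}), identifies $\CC\G\otimes_{\CC\H}L^\infty(\hat A)$ with the homogeneous-space module $\CC[\G/\H]_t$ up to dimension isomorphism (Proposition \ref{the-first-iso}), and then passes from $L^\infty(\hat A)\rtimes\Gamma$ down to $\CC[\B_\all]\rtimes\Gamma$ not through rank completions but through the F{\o}lner-type Theorem \ref{commutative-dim-flatness} --- dimension flatness of $\CC[\B_\all]\subseteq L^\infty(\hat A)$, a statement about $\Tor$ into the von Neumann algebra that is much weaker than rank density --- combined with the base change Corollary \ref{dim-flat-base-change-v2}, applied to an explicitly constructed $\CC[\B_\all]\rtimes\Gamma$-module $K$ of indicator functions on $\G/\H$ whose induction to $L^\infty(\hat A)\rtimes\Gamma$ is $\dim_{L^\infty(\hat A)}$-isomorphic to $\CC[\G/\H]_t$. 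To salvage your route you would need an independent proof that $S$ is dimension flat over $\mathcal H_0$ at the module $\CC[\B_\all]$, and no such argument is in sight.
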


\begin{proof} 
Denote by $\G$ the translation groupoid of the Bernoulli action of $\Gamma$ on $\hat{A}$ and by $\H$ the sub-groupoid arising from the action of $\FF_2$. We now get

\begin{align*}
1&=\beta_1^{(2)}(\FF_2)  \\
&=\dim_{L\H}\Tor_1^{\CC \H}(L\H, L^\infty(\hat{A})) \tag{\cite[Theorem 5.5]{sauer-betti-of-groupoids}}\\
&=\dim_{L\G}\Tor_1^{\CC \H} (L\G, L^\infty(\hat{A})) \tag{\cite[Theorem 6.29]{Luck02}}\\
&=\dim_{L\G}\Tor_1^{\CC \G} \left(L\G, \CC\G\underset{{\CC \H}} {\tens} L^\infty(\hat{A})\right) \tag{Proposition \ref{prop:dimflatness} \& Corollary \ref{lma:flatbasechange}}\\
&=\dim_{L\G}\Tor_1^{L^\infty(\hat{A})\rtimes \Gamma } \left(L\G, \CC\G\underset{{\CC \H}} {\tens} L^\infty(\hat{A})\right). \tag{\cite[Theorem 4.11]{sauer-betti-of-groupoids}}
\end{align*}
In the last line of the above computation we may,  by \cite[Lemma 4.1]{sauer-betti-of-groupoids}, replace  the module $\CC\G\tens_{\CC \H} L^\infty(\hat{A})$  with any other $L^\infty(\hat{A})\rtimes \Gamma$-module which is $\dim_{L^\infty(\hat{A})}$-isomorphic to it without changing the $L\G$-dimension of the $\Tor$-module. In order to apply \cite[Lemma 4.1]{sauer-betti-of-groupoids} we need to know that $L^\infty(\hat{A})\rtimes \Gamma$ is dimension-compatible as an $L^\infty(\hat{A})$-bimodule, but this follows from the remarks  proceeding Lemma \ref{completion-properties}. Appealing to Proposition \ref{the-first-iso}, we  have a dimension-isomorphism
\begin{align*}
\CC\G\underset{{\CC \H}} {\tens} L^\infty(\hat{A}) \simeq \CC[\G/\H].
\end{align*}
Furthermore, the decomposition lemma shows that $\CC[\G]$ is rank dense in $\CC[\G]_t$ and since $\kappa_\H^\G$ is an $L^\infty(\hat{A})$-homomorphism (in particular rank continuous) the inclusion $\CC[\G/\H]\subseteq \CC[\G/\H]_t$ is also a $\dim_{L^\infty(\hat{A})}$-isomorphism. Thus we obtain that
\begin{align}\label{den-nye-et-formel}
1= \dim_{L\G}\Tor_1^{L^\infty(\hat{A})\rtimes \Gamma } \left(L\G, \CC[\G/\H]_t\right).
\end{align}
Since $L^\infty(\hat{A})\rtimescom \Gamma =L\G$, if the inclusion $\mathbb{C}[\B_\all]\rtimes \Gamma \subseteq L^\infty(\hat{A})\rtimescom \Gamma$ were dimension flat then, for an arbitrary $\mathbb{C}[\B_\all]\rtimes \Gamma$-module $K$, we would have 

\begin{align*}
0&=\dim_{L^\infty(\hat{A})\rtimescom \Gamma}\Tor_1^{\mathbb{C}[\B_\all]\rtimes \Gamma}(L^\infty(\hat{A})\rtimescom \Gamma, K)\\
&=\dim_{L\G} \Tor_1^{\mathbb{C}[\B_\all]\rtimes \Gamma}(L\G, K) \\
&=\dim_{L\G} \Tor_1^{L^\infty(\hat{A})\rtimes \Gamma }\left(L\G, L^\infty(\hat{A})\rtimes\Gamma\underset{\mathbb{C}[\B_\all]\rtimes \Gamma}{\tens} K\right) ,
\end{align*}
where the last equation follows from the dimension flat base change formula in Corollary \ref{dim-flat-base-change-v2}.  In order to prove that  $\mathbb{C}[\B_\all]\rtimes \Gamma \subseteq L^\infty(\hat{A})\rtimescom \Gamma$ is not dimension  flat it therefore suffices, by \eqref{den-nye-et-formel},  to show that there exists a $\mathbb{C}[\B_\all]\rtimes \Gamma$-module $K$ and a homomorphism of left $L^\infty(\hat{A})\rtimes \Gamma$-modules
\begin{align}\label{mult-iso}
L^\infty(\hat{A})\rtimes\Gamma\underset{{\mathbb{C}[\B_\all]\rtimes \Gamma}}{\tens} K  \To   \CC[\G/\H]_t, 
\end{align}
which is a $\dim_{L^\infty(\hat{A})}$-isomorphism.  
To this end, we define 
\[
K:=\spann_\CC\{ \bbb_E \mid E\subseteq \G/\H \text{ and } \bbb_E\in \CC[\G/\H]_t   \} \subseteq \CC[\G/\H]_t
\] 
It is easy to see that this becomes a module for the action (via $\kappa_\H^\G$) of $\CC[\B_\all]\rtimes \Gamma\subseteq \CC[\G]$ 
and we now claim
that the multiplication map
\[
\mult\colon L^\infty(\hat{A})\rtimes\Gamma\underset{{\mathbb{C}[\B_\all]\rtimes \Gamma}}{\tens} K  \To   \CC[\G/\H]_t 
\]
is a $\dim_{L^\infty(\hat{A})}$-isomorphism.  \\

To see that $\mult$ is dimension-surjective, observe first that
\[
\kappa_\H^{\G}(L^\infty(\hat{A})\rtimes \Gamma)= (L^\infty(\hat{A})\rtimes \Gamma).\bbb_{\hat{A}} \subseteq \rg(\mult).
\]
By  \cite[Lemma 5.4]{sauer-betti-of-groupoids}  and Lemma \ref{lma:decomposition} the inclusions  
\[
L^\infty(\hat{A})\rtimes \Gamma \subseteq \CC[\G]\subseteq \CC[\G]_t
\]
are $\dim_{L^\infty(\hat{A})}$-isomorphism and $\kappa_{\H}^\G(L^\infty(\hat{A})\rtimes \Gamma)$ is therefore rank dense in $\kappa_\H^\G(\CC[\G]_t)=\CC[\G/\H]_t$. Thus $\mult$ is dimension-surjective.\\

To prove dimension-injectivity, let $T\in \ker(\mult)$ be given. We actually aim to prove that $\mult$ is properly injective; i.e.~that $T=0$. Write $T$ as $\sum_{i=1}^n f_i \tens m_i$ where $f_i\in L^\infty(\hat{A})$ and $m_i\in K$;
since the family of target-bounded Borel subsets in $\G/\H$ is stable 
under finite intersections, we can find mutually disjoint, target-bounded, Borel subsets   $F_1,\dots, F_r$ in $\G/\H$ such that each $m_i$ can be written as
\[
m_i=\sum_{j=1}^r m_i(F_j)\bbb_{F_j}
\]
for some $m_i(F_j)\in \CC$.  Since $T\in \ker(\mult)$ and $\kappa_{\H}^{\G}$ acts like the identity on $\CC[\G/\H]_t$ we have
\[
0= \mult(T)=\kappa_{\H}^{\G}\left(\sum_{i=1}^n f_i\ast m_i\right)=\sum_{j=1}^r \left(\sum_{i=1}^n f_i m_i(F_j)\right)\ast \bbb_{F_j}. 
\]
As the $F_j$'s are disjoint this implies that the restriction of $\sum_{i=1}^n f_i m_i(F_j)$ to $t(F_j)$ is zero for every $j\in\{1,\dots, r\}$. Moreover, since each of the $\bbb_{F_j} \in K$ we may rewrite $T$ as
\begin{align*}
T &= \sum_{j=1}^r \left(\sum_{i=1}^n f_i m_i(F_j)\right)\tens \bbb_{F_j}\\
&= \sum_{j=1}^r \left(\sum_{i=1}^n f_i m_i(F_j)\right)\tens \bbb_{t(F_j)}\ast \bbb_{F_j}\\
&=\sum_{j=1}^r\left(\sum_{i=1}^n f_i m_i(F_j)\right)\bbb_{t(F_j)}\tens \bbb_{F_j}=0. \qedhere
\end{align*} 
\end{proof}

We remark that a converse to Theorem \ref{amenability-to-dim-flat-thm} could have been obtained  without reference to the finite group $A_0$, by simply using the continuous base space version (Theorem \ref{gaboriau-lyons-continuous}) of Gaboriau-Lyons' theorem in the statement and  proof of Theorem \ref{non-flat-thm-all-borel-sets}. However, in the following section we will investigate how ``close'' to a group algebra we can choose the crossed product $\CC[\B]\rtimes \Gamma$ exhibiting the non-dimension flatness, and the construction above shows that at least the measure space can be chosen, naturally, to arise from a discrete group. Returning to  the purely measure theoretic context we obtain the following groupoid solution to L{\"u}ck's amenability conjecture. 
\begin{por}\label{groupoid-porism}
A discrete group $\Gamma$ is amenable if and only if the following holds: for any free, ergodic, p.m.p.~action of $\Gamma$ on a non-atomic standard Borel space $(X,\mu)$ the inclusion of the corresponding groupoid ring $\CC[\mathcal{R}_{\Gamma \curvearrowright X}]$ into the  groupoid von Neumann algebra $L(\mathcal{R}_{\Gamma \curvearrowright X})$ is dimension  flat.
\end{por}
\begin{proof}
If $\Gamma$ is amenable then by \cite[Corollary 6.6]{amenability-and-vanishing} the inclusion $L^\infty(X)\rtimes \Gamma \subseteq L^\infty(X)\rtimescom \Gamma$ is  dimension  flat and applying \cite[Theorem 4.11]{sauer-betti-of-groupoids} we get, for an arbitrary $\CC[\mathcal{R}_{\Gamma \curvearrowright X}]$-module $K$ and $p\geqslant 1$, that 
\begin{align*}
\dim_{L(\mathcal{R}_{\Gamma \curvearrowright X})}\Tor_p^{\CC[\mathcal{R}_{\Gamma \curvearrowright X}]}(L(\mathcal{R}_{\Gamma \curvearrowright X}), K) &= \dim_{L(\mathcal{R}_{\Gamma \curvearrowright X})}\Tor_p^{L^\infty(X)\rtimes \Gamma}(L(\mathcal{R}_{\Gamma \curvearrowright X}), K)\\
&=\dim_{L^\infty(X)\rtimescom \Gamma}\Tor_p^{L^\infty(X)\rtimes \Gamma}(L^\infty(X)\rtimescom \Gamma, K)=0.
\end{align*}
Conversely, if $\Gamma$ is not amenable then by Theorem \ref{gaboriau-lyons-continuous} the Bernoulli action of $\Gamma$ on $X:=[0,1]^\Gamma$ contains  a free action of $\FF_2$; hence we have $\CC[\mathcal{R}_{\FF_2 \curvearrowright X}  ] \subseteq \CC[\mathcal{R}_{\Gamma \curvearrowright X}]$ and, like in the proof of Theorem \ref{non-flat-thm-all-borel-sets}, we therefore get

\begin{align*}
1&=\beta_1^{(2)}(\FF_2)  \\
&=\dim_{L(\mathcal{R}_{\FF_2 \curvearrowright X})}\Tor_1^{\CC[\mathcal{R}_{\FF_2 \curvearrowright X}]}(L(\mathcal{R}_{\FF_2 \curvearrowright X}), L^\infty(X)) \tag{\cite[Theorem 5.5]{sauer-betti-of-groupoids}}\\
&=\dim_{L(\mathcal{R}_{\Gamma \curvearrowright X})}\Tor_1^{\CC[\mathcal{R}_{\FF_2 \curvearrowright X}]} (L(\mathcal{R}_{\Gamma \curvearrowright X}), L^\infty(X)) \tag{\cite[Theorem 6.29]{Luck02}}\\
&=\dim_{L(\mathcal{R}_{\Gamma \curvearrowright X})}\Tor_1^{\CC[\mathcal{R}_{\Gamma \curvearrowright X}]} \left( L(\mathcal{R}_{\Gamma \curvearrowright X}), \CC[\mathcal{R}_{\Gamma \curvearrowright X}]\underset{{\CC[\mathcal{R}_{\FF_2 \curvearrowright X}]}} {\tens} L^\infty(X)\right), 
\end{align*} 
where the last equality follows from Proposition \ref{prop:dimflatness} and Corollary \ref{lma:flatbasechange}.
Thus the inclusion $\CC[\mathcal{R}_{\Gamma \curvearrowright X}]\subseteq L(\mathcal{R}_{\Gamma \curvearrowright X})$ cannot be dimension  flat.
\end{proof}

\subsection{Improving the subalgebra}
In the previous section we saw that whenever $\Gamma$ is  a finitely generated non-amenable group then there exists a finite abelian group $A_0$ such that the Bernoulli action of $\Gamma$ on the dual $\hat{A}$ of $A:=\oplus_\Gamma A_0$ contains an action of $\FF_2$, and as a consequence the inclusion $\CC[\B_\all]\rtimes \Gamma \subseteq L^\infty(\hat{A})\rtimescom \Gamma$ is not dimension  flat. 
It would of course be desirable to be able to replace $\CC[\B_\all]$ with $\CC[\B_\co]$ and thereby obtain non-dimension  flatness of the, somewhat more natural, inclusion $\CC[A_0\wr \Gamma]\subseteq L(A_0\wr \Gamma)$.
Although we were not able to show this, certain improvements are still possible.  As a first step we show that one can replace  $\CC[\B_\all]$ with an algebra of step functions with a countable linear basis.

\begin{prop}\label{fg-ground-ring}
Let $\Gamma$ be finitely generated and non-amenable and let $A_0$, $A$ and $\hat{A}$ be as above. Then there exists a countable $\B\subseteq \B_\all$ such that $\CC[\B]\rtimes \Gamma$ is finitely generated as a $\CC\Gamma$-module and  for which the inclusion $\CC[\B]\rtimes \Gamma \subseteq L^\infty(\hat{A})\rtimescom \Gamma$ is not dimension  flat.
\end{prop}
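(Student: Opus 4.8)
The plan is to keep the part of the proof of Theorem~\ref{non-flat-thm-all-borel-sets} that does not involve $\B$, namely the identity
\[
\dim_{L\G}\Tor_1^{\CC\G}\bigl(L\G,\CC[\G/\H]_t\bigr)=\beta_1^{(2)}(\FF_2)=1 ,
\]
and to replace $\B_\all$ by a countable, $\Gamma$-stable Boolean system $\B$ that is still rich enough to recover this class. Throughout write $R:=\CC[\B]\rtimes\Gamma$ and recall that $\CC\G=L^\infty(\hat A)\rtimes\Gamma$ while $L\G=L^\infty(\hat A)\rtimescom\Gamma$. The essential difference from Theorem~\ref{non-flat-thm-all-borel-sets} is that the base change of Corollary~\ref{dim-flat-base-change-v2} is no longer available: that corollary rests on Theorem~\ref{commutative-dim-flatness} and hence on rank density of $\CC[\B]$ in $L^\infty(\hat A)$, and rank density is incompatible with finite generation over $\CC\Gamma$ (a $\CC\Gamma$-finitely generated submodule of $L^2(\hat A)$ has finite $\dim_{L\Gamma}$, whereas $L^2(\hat A)$ does not). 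I would therefore replace the equality coming from base change by a one-sided spectral sequence estimate.

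For the construction I would put into $\B$ the countably many Borel sets arising as the targets $t(E_{(i,l),1})\subseteq\hat A$ together with the coefficient sets describing the $\Gamma$-action on the coset indicators, and close off under complements, finite intersections and the $\Gamma$-action; this yields a countable system, so $\CC[\B]$ has countable linear dimension. Let $K_\B\subseteq\CC[\G/\H]_t$ be the $R$-submodule generated by the coset indicators $\bbb_{E_{(i,l),1}}$. Since $t$ is injective on each piece $E_{(i,l),1}$, the action of $\CC[\B]$ on $\bbb_{E_{(i,l),1}}$ merely multiplies by indicators $\bbb_C\circ t$ with $C\in\B$, so every element of $K_\B$ is a combination of indicators $\bbb_E$ with $E$ contained in a single piece and $t(E)\in\B$. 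I would then re-run the two halves of the argument in Theorem~\ref{non-flat-thm-all-borel-sets} for the map $\mult\colon\CC\G\tens_R K_\B\to\CC[\G/\H]_t$. Dimension surjectivity follows because for $f\in L^\infty(\hat A)\subseteq\CC\G$ one has $\kappa_\H^\G(f\ast\bbb_{E_{(i,l),1}})=(f\circ t)\,\bbb_{E_{(i,l),1}}$, and as $f$ and $(i,l)$ vary these $L^\infty$-multiples rank-densely span $\CC[\G/\H]_t$. For (proper) injectivity one decomposes $T=\sum_i f_i\tens m_i\in\ker(\mult)$ over disjoint target-bounded pieces $F_j$ exactly as in the theorem; the step $\bbb_{F_j}=\bbb_{t(F_j)}\ast\bbb_{F_j}$ then allows $\bbb_{t(F_j)}$ to be moved across the tensor, which is legitimate precisely because $t(F_j)\in\B$ — this is what forced the targets into $\B$ — and yields $T=0$.

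To transfer the class without flatness of $R\subseteq\CC\G$ I would invoke the change-of-rings (Grothendieck) spectral sequence
\[
E^2_{p,q}=\Tor_p^{\CC\G}\bigl(L\G,\Tor_q^{R}(\CC\G,K_\B)\bigr)\;\Longrightarrow\;\Tor_{p+q}^{R}\bigl(L\G,K_\B\bigr),
\]
valid for the ring extension $R\subseteq\CC\G$ and the $\CC\G$-module $L\G$. Its edge homomorphism gives a surjection $\Tor_1^{R}(L\G,K_\B)\twoheadrightarrow E^2_{1,0}=\Tor_1^{\CC\G}(L\G,\CC\G\tens_R K_\B)$. Combining the dimension isomorphism $\CC\G\tens_R K_\B\cong\CC[\G/\H]_t$ with \cite[Lemma~4.1]{sauer-betti-of-groupoids} and with monotonicity of $\dim_{L\G}$ under quotients, I obtain
\[
\dim_{L\G}\Tor_1^{R}\bigl(L\G,K_\B\bigr)\;\geq\;\dim_{L\G}\Tor_1^{\CC\G}\bigl(L\G,\CC[\G/\H]_t\bigr)=1>0 ,
\]
so that $R\subseteq L\G$ is not dimension flat by Proposition~\ref{dim-flatness-eq-prop}.

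The main obstacle is the finite generation of $\CC[\B]$ over $\CC\Gamma$. Since the $\FF_2$-action $\sigma^o$ produced in Corollary~\ref{lamplighter-cor} does not commute with the $\Gamma$-action, the subgroupoid $\H$, and with it the target sets $t(E_{(i,l),1})$, are not $\Gamma$-translates of finitely many fixed sets, so naively closing under $\Gamma$ need not give a $\CC\Gamma$-finitely generated algebra. The plan is to exploit the explicit form of the Gaboriau--Lyons construction — the partitions $A_\gamma^o=\psi^{-1}(A_\gamma)$ and $B_\gamma^o=\psi^{-1}(B_\gamma)$ and the single $\Gamma$-equivariant map $\psi$ governing them — to choose the generators so that only finitely many $\Gamma$-orbit types of atoms occur, and then to verify that the resulting countable Boolean system still contains all the targets and transition sets needed to make $\mult$ a dimension isomorphism.
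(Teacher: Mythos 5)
Your homological skeleton (the edge surjection $\Tor_1^{R}(L\G,K_\B)\twoheadrightarrow\Tor_1^{S}(L\G,S\otimes_R K_\B)$ from the change-of-rings spectral sequence, combined with a $\dim_{L^\infty(\hat A)}$-isomorphism $S\otimes_R K_\B\to\CC[\G/\H]_t$ and Sauer's Lemma 4.1) is sound in principle, but the proposal has a genuine gap exactly where the proposition has its content: the finite generation over $\CC\Gamma$. Theorem \ref{non-flat-thm-all-borel-sets} already gives non-dimension-flatness for $\B_{\all}$; the whole point of Proposition \ref{fg-ground-ring} is that $\B$ can be taken small. In your construction $\B$ must contain, at the very least, the target sets $t(F_j)$ of every piece occurring in an $\H$-reduced decomposition of an element of $K_\B$ (this is what lets you move $\bbb_{t(F_j)}$ across the tensor product in the injectivity argument), and these sets are governed by the Gaboriau--Lyons partitions $A^o_\gamma$, $B^o_\gamma$ --- a countable family with no reason to lie in finitely many $\Gamma$-orbits. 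You acknowledge this and end with a ``plan'' rather than an argument; as it stands the finiteness claim is not proved, and it is doubtful that it can be along this route. (A smaller point: $\CC\G$ is not equal to $L^\infty(\hat A)\rtimes\Gamma$; the latter is only a rank-dense subring of the groupoid ring, so you must fix $S=L^\infty(\hat A)\rtimes\Gamma$ consistently when invoking equation \eqref{den-nye-et-formel}.)

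The paper's proof avoids constructing a new witness module altogether and never touches the coset structure of $\H$ when building $\B$. It takes the module $L$ over $\CC[\B_{\all}]\rtimes\Gamma$ witnessing non-flatness from Theorem \ref{non-flat-thm-all-borel-sets}, replaces it by a finitely presented one via an inductive limit argument, and observes that the entries of a finite presentation matrix involve only finitely many Borel sets $F_1,\dots,F_r$, which may be taken to form a partition. The system $\B$ is then the closure of $\{F_1,\dots,F_r\}$ under complements, finite intersections and $\Gamma$-translates, and the induced finitely presented module $L'$ over $\CC[\B]\rtimes\Gamma$ satisfies $L^\infty(\hat A)\rtimes\Gamma\otimes_{\CC[\B]\rtimes\Gamma}L'\cong L^\infty(\hat A)\rtimes\Gamma\otimes_{\CC[\B_{\all}]\rtimes\Gamma}L$ by right-exactness of the tensor product; two applications of the base change formula (Corollary \ref{dim-flat-base-change-v2}, whose hypotheses for the small system are supplied by the remark following it, using that $L^\infty(\hat A)\rtimescom\Gamma$ is a factor) then transfer the positive $\Tor$-dimension to $\CC[\B]\rtimes\Gamma$. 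Your preliminary observation that rank density of $\CC[\B]$ in $L^\infty(\hat A)$ is in tension with finite generation over $\CC\Gamma$ is a fair one to raise against that route, but it led you to discard the mechanism --- finitely presented witness plus base change --- that actually delivers the finiteness statement; the finitely-presented-module reduction is the idea your argument is missing.
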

\begin{proof}
By Theorem \ref{non-flat-thm-all-borel-sets} there exists a $\CC[\B_\all]\rtimes\Gamma$-module $L$ such that
\[
\dim_{L^\infty(\hat{A})\rtimescom \Gamma}\Tor_1^{\CC[\B_\all] \rtimes \Gamma}(L^\infty(\hat{A})\rtimescom \Gamma, L)>0,
\]
and by an inductive limit argument\footnote{See e.g.~the last part of the proof of Theorem 6.37 in \cite{Luck02} for the details.}  we may assume that $L$ is finitely presented. We can therefore find a presentation
\[
(\CC[\B_\all] \rtimes \Gamma)^k \overset{\cdot T }{\To} (\CC[\B_\all] \rtimes \Gamma)^l \To L \To 0,
\]
where $T=(T_{ij})$ is a $k\times l$ matrix with entries from $\CC[\B_\all]\rtimes \Gamma$.  Hence there exists a finite Borel partition $F_1,\dots, F_r$ of $\hat{A}$ such that every element $T_{ij}$ can be written as
\[
T_{ij}=\sum_{k=1}^r \bbb_{F_k}\left(\sum_{\gamma\in S_k} r_\gamma^{(ij)}u_\gamma\right) 
\]
for some finite subsets $S_k\subseteq \Gamma$ and some $r_\gamma^{(ij)}\in \CC$.  If there are at least two $F_i$'s we
 define $\B$ to be the family of subsets obtained by closing the finite family
\[
\{ F_1,\dots, F_r \}
\]
under finite intersections, complements and $\Gamma$-translates.  If there is only one $F_i$ we simply add an artificial subset $F_0$ with measure neither zero nor one and close $\{F_0,F_1\}$ under complements, finite intersections and $\Gamma$-translates. Since the Bernoulli action of $\Gamma$ is free, ergodic and p.m.p.~the crossed product von Neumann algebra $L^\infty(\hat{A})\rtimescom\Gamma$ is a $\twoone$-factor and hence the assumptions in Corollary \ref{dim-flat-base-change-v2} are satisfied.  Since $T_{ij}\in \CC[\B]\rtimes\Gamma$ we have, by right-exactness of the tensor product, that
\begin{align*}
L^\infty(\hat{A})\rtimes \Gamma \underset{\CC[\B_\all]\rtimes \Gamma}{\tens} L &= L^\infty(\hat{A})\rtimes \Gamma \underset{\CC[\B_\all]\rtimes \Gamma}{\tens} \frac{(\CC[\B_\all] \rtimes \Gamma)^l}{(\CC[\B_\all]\rtimes \Gamma)^kT}\\
&=\frac{(L^\infty(\hat{A})\rtimes \Gamma)^l }{(L^\infty(\hat{A})\rtimes \Gamma)^k T}\\
&= L^\infty(\hat{A})\rtimes \Gamma \underset{\CC[\B]\rtimes \Gamma}{\tens} \underbrace{\frac{(\CC[\B] \rtimes \Gamma)^l}{(\CC[\B]\rtimes \Gamma)^kT}}_{=:L'}
\end{align*}
Using the dimension  flat base change formula (Corollary \ref{dim-flat-base-change-v2}) twice we therefore obtain

\begin{align*}
0 &<\dim_{L^\infty(\hat{A})\rtimescom \Gamma}\Tor_1^{\CC[\B_\all] \rtimes \Gamma}  \left(L^\infty(\hat{A})\rtimescom \Gamma, L\right)\\
&=  \dim_{L^\infty(\hat{A})\rtimescom \Gamma}\Tor_1^{L^\infty(\hat{A}) \rtimes \Gamma}\left(L^\infty(\hat{A})\rtimescom \Gamma, L^\infty(\hat{A}) \rtimes \Gamma\tens_{\CC[\B_\all]\rtimes \Gamma} L\right)\\
&=  \dim_{L^\infty(\hat{A})\rtimescom \Gamma}\Tor_1^{L^\infty(\hat{A}) \rtimes \Gamma}\left(L^\infty(\hat{A})\rtimescom \Gamma, L^\infty(\hat{A}) \rtimes \Gamma\tens_{\CC[\B]\rtimes \Gamma} L'\right)\\
&=\dim_{L^\infty(\hat{A})\rtimescom \Gamma}\Tor_1^{\CC[\B] \rtimes \Gamma}\left(L^\infty(\hat{A})\rtimescom \Gamma, L'\right)
\end{align*}
\end{proof}

It would be desirable to have more information about the algebra $\mathbb{C}[\mathscr{B}]\rtimes \Gamma$ from the previous proposition. Ideally, we would like to know whether or not we can replace it with the group algebra of $A_0\wr \Gamma$, or if it is the complex group algebra of any countable discrete group. In fact, we do not know of any general criteria to decide whether an algebra of this form is a group algebra or not. The following proposition provides such a criterion.

\begin{proposition}
Let $\Gamma \curvearrowright (X,\mu)$ be a measure preserving action on a standard probability space such that each element $\gamma \in \Gamma\setminus \{ \bbb\}$ acts ergodically.
Let $X=\sqcup_{n=1}^N F_n$ be a finite partition of $X$ and let $R$ be the $\Gamma$-invariant unital $*$-algebra generated by the step functions $\bbb_{F_n}$. Then there exists a finite abelian group $A_0$ such that $\CC[A_0\wr \Gamma] \simeq R\rtimes \Gamma \subseteq L^{\infty}X\rtimescom \Gamma$ if for every $m\neq n$ and every $\gamma \neq \bbb$ we have $\mu(F_m\cap \gamma(F_n)) > 0$.
\end{proposition}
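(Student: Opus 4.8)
The plan is to reduce the assertion $\CC[A_0\wr\Gamma]\simeq R\rtimes\Gamma$ to the construction of a single $\Gamma$-equivariant $*$-isomorphism of the base algebras $R\simeq \CC[\oplus_\Gamma A_0]$, since a $\Gamma$-equivariant $*$-isomorphism of bases induces an isomorphism of the crossed products $R\rtimes\Gamma\simeq(\oplus_\Gamma A_0)\text{-algebra}\rtimes\Gamma=\CC[A_0\wr\Gamma]$ compatibly with the inclusion into $L^\infty(X)\rtimescom\Gamma$. To build the base isomorphism I would take $A_0$ to be the cyclic group of order $N$. The subspace $D_\bbb:=\spann_\CC\{\bbb_{F_1},\dots,\bbb_{F_N}\}$ is a copy of $\CC^N$ whose minimal projections are precisely the $\bbb_{F_n}$, so after fixing a bijection $n(\cdot)$ between $\widehat{A_0}$ and $\{1,\dots,N\}$ the group algebra $\CC[A_0]\cong\CC^N$ is identified with $D_\bbb$ by sending the minimal idempotent $p_\chi$ to $\bbb_{F_{n(\chi)}}$; call this $*$-iso $\theta_\bbb$. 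Putting $\theta_\gamma:=\gamma\circ\theta_\bbb$ (with $\gamma$ denoting the action on $L^\infty(X)$) gives $*$-isomorphisms of $\CC[A_0]$ onto $D_\gamma:=\spann\{\bbb_{\gamma(F_n)}\}$, and because $L^\infty(X)$ is commutative the universal (coproduct) property of $\CC[\oplus_\Gamma A_0]=\varinjlim_S\bigotimes_{\gamma\in S}\CC[A_0]$ yields a single $*$-homomorphism $\theta\colon\CC[\oplus_\Gamma A_0]\to L^\infty(X)$ restricting to $\theta_\gamma$ on each factor. By construction $\theta$ intertwines the coordinate shift with the $\Gamma$-action, so it is $\Gamma$-equivariant, and its image is the $*$-algebra generated by all the $D_\gamma$, which is exactly $R$.

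Thus everything reduces to the injectivity of $\theta$. The point is that $\CC[\oplus_\Gamma A_0]$ has a basis of ``cylinder'' minimal projections adapted to $\theta$: for a finite $S\subseteq\Gamma$ and an assignment $(n_\gamma)_{\gamma\in S}$ the element $\bigotimes_{\gamma\in S}p_{\chi_{n_\gamma}}$ is sent by $\theta$ to $\bbb_{\bigcap_{\gamma\in S}\gamma(F_{n_\gamma})}$. For fixed $S$ these cylinder sets partition $X$, so their indicator functions have pairwise disjoint supports; hence $\theta$ is injective \emph{if and only if} every such intersection is non-null, i.e.
\[
\mu\!\left(\textstyle\bigcap_{\gamma\in S}\gamma(F_{n_\gamma})\right)>0
\quad\text{for all finite }S\text{ and all }(n_\gamma).
\]
Equivalently, writing $\Phi\colon X\to\{1,\dots,N\}^\Gamma$ for the $\Gamma$-equivariant coding map $\Phi(x)(\gamma)=n\iff\gamma^{-1}.x\in F_n$, the requirement is exactly that the push-forward $\Phi_*\mu$ be fully supported on the full shift.

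The hard part will be establishing this joint positivity of all finite intersections out of the pairwise hypothesis $\mu(F_m\cap\gamma(F_n))>0$ and the ergodicity of every non-trivial element. My plan is an induction on $|S|$: after translating one may assume $\bbb\in S$, peel off the coordinate $\bbb$, and write $\bigcap_{\gamma\in S}\gamma(F_{n_\gamma})=F_{n_\bbb}\cap B$ with $B=\bigcap_{\gamma\in S\setminus\{\bbb\}}\gamma(F_{n_\gamma})$ a smaller cylinder, positive by induction; one must then force the positive-measure set $B$ to meet $F_{n_\bbb}$ in positive measure. This is precisely where ``every $\gamma\neq\bbb$ acts ergodically'' has to be exploited, via a Poincar\'e-recurrence / ergodic-averaging argument relating the coordinate $\bbb$ to the block $S\setminus\{\bbb\}$, with the pairwise bound serving as the base case that rules out degeneracy. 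I expect this step to be the genuine crux, since even at the two-coordinate diagonal level one needs $\mu(F_m\cap\gamma(F_m))>0$, which does not follow from the off-diagonal pairwise bound alone and must be extracted from ergodicity; the delicate matter is to combine the two hypotheses so as to exclude the ``forbidden pattern'' degeneracies that would otherwise shrink the support of $\Phi_*\mu$. Once full support is secured, $\theta$ is an injective, hence bijective, $\Gamma$-equivariant $*$-homomorphism onto $R$, and the induced isomorphism of crossed products gives $\CC[A_0\wr\Gamma]\simeq R\rtimes\Gamma$.
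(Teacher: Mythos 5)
Your construction is the paper's, up to a discrete Fourier transform on $A_0$: the paper takes $A_0=\ZZ/N\ZZ$ and sends the generator $\zeta$ to the unitary $u=\sum_{n=1}^N\zeta^n\bbb_{F_n}$, which is exactly your idempotent assignment $p_\chi\mapsto\bbb_{F_{n(\chi)}}$ read in the dual basis, and it likewise reduces injectivity to the linear independence of the indicators $\bbb_{\bigcap_i\gamma_i(F_{j(i)})}$, i.e.\ to the positivity of the measure of every finite cylinder intersection. So your reduction coincides with the paper's argument in every respect.

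The gap is the step you flag yourself and never carry out: deducing $\mu\bigl(\bigcap_{\gamma\in S}\gamma(F_{n_\gamma})\bigr)>0$ for all finite $S$ from the pairwise hypothesis and the ergodicity of each non-trivial element. As written, your proposal is a plan, not a proof, at exactly the decisive point. In fairness, the paper is no more explicit --- its proof ends with the assertion that ``the hypothesis is seen to imply that these are linearly independent'' --- so you have not omitted anything the paper supplies. Moreover, your suspicion that this step is the genuine crux (and that the diagonal case $\mu(F_m\cap\gamma(F_m))>0$ does not follow from the off-diagonal hypothesis) is well founded: for the golden mean subshift of finite type on $\{1,2\}^{\ZZ}$ with its Parry measure, every non-trivial power of the shift is ergodic and $\mu(F_m\cap\sigma^k(F_n))>0$ for all $m\neq n$ and $k\neq 0$, yet $F_1\cap\sigma(F_1)$ is null; and even granting all diagonal pairs, a mixing SFT with a single forbidden word of length $3$ has all pairwise intersections positive while a triple intersection is null. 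So the inductive scheme you outline cannot close without input beyond the stated hypotheses, and the missing step is not merely unproved but genuinely problematic.
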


\begin{proof}
Let $\zeta$ be a primitive $N$'th root of unity and let $A_0=\langle \zeta\rangle = \mathbb{Z}/N\mathbb{Z}$. We claim that the assignment $\zeta \mapsto u:=\sum_{n=1}^{N} \zeta^n \bbb_{F_n}$ extends to the desired isomorphism.

The map is seen to be surjective. To see injectivity we must show that, enumerating $\Gamma=\{\gamma_k\}_{k\in \mathbb{N}}$, for any $K\in \mathbb{N}$
\begin{equation}
\dim_{\mathbb{C}} \operatorname{span}_{\mathbb{C}} \{ \gamma_1(u^{i_1})\cdots \gamma_K(u^{i_K}) \mid 1\leq i_k\leq N \} = N^K. \nonumber
\end{equation}
But this is clear because the subspace is spanned linearly by 
\[
\{ \bbb_{\cap_{i=1}^{K}\gamma_i(F_{j(i)})}\mid j\colon \{1,\dots, K \} \to \{1,\dots, N\} \}
\]
and the hypothesis is seen to imply that these are linearly independent.
\end{proof}

\begin{corollary}
Let $\Gamma \curvearrowright (X,\mu)$ be a measure preserving action on a standard probability space by homeomorphisms and such that each element $\gamma \in \Gamma\setminus \{ \bbb\}$ acts ergodically.
Let $R$ be the $\Gamma$-invariant unital $*$-algebra generated by the two step functions $\bbb_C$ and $\bbb_U$ where $C\subseteq X$ is a compact set with empty interior, $\mu(C)\geq 1/2$, and $U=C^{\complement}$ is an open dense set. Then $R\rtimes \Gamma \simeq \mathbb\CC[{\ZZ}/2\mathbb{Z} \wr \Gamma]$.
\end{corollary}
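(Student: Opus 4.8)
The plan is to recognise the statement as the special case $N=2$ of the preceding Proposition, applied to the partition $X = C \sqcup U$ with $F_1 = C$, $F_2 = U$ and $A_0 = \ZZ/2\ZZ$. Since $\bbb_U = \bbb - \bbb_C$, the $\Gamma$-invariant unital $*$-algebra generated by $\bbb_C$ and $\bbb_U$ coincides with the one generated by the two step functions $\bbb_{F_1}, \bbb_{F_2}$, so the algebra $R$ of the Corollary is literally the algebra $R$ appearing in the Proposition. Everything therefore reduces to verifying the single hypothesis of the Proposition, namely that $\mu(F_m \cap \gamma(F_n)) > 0$ for all $m \neq n$ and all $\gamma \neq \bbb$.

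First I would carry out the reduction of this positivity condition. Because $N=2$ and the two sets are complementary, the two relevant inequalities collapse to a single quantity: using $\gamma(U) = (\gamma C)^{\complement}$ together with measure-preservation $\mu(\gamma C) = \mu(C)$, one computes
\[
\mu(C \cap \gamma U) = \mu(C \setminus \gamma C) = \mu(C) - \mu(C \cap \gamma C) = \mu(\gamma C \setminus C) = \mu(U \cap \gamma C).
\]
Hence the hypothesis holds for a given $\gamma \neq \bbb$ precisely when $\mu(C \triangle \gamma C) > 0$, i.e.\ when $C$ fails to be $\gamma$-invariant modulo null sets. Next I would invoke ergodicity: if $\mu(C \triangle \gamma C) = 0$ for some $\gamma \neq \bbb$, then $C$ is a $\gamma$-invariant set modulo null sets, and since $\gamma$ acts ergodically this forces $\mu(C) \in \{0,1\}$. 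It thus suffices to establish $0 < \mu(C) < 1$; the lower bound is immediate from the assumption $\mu(C) \geq 1/2$, while the upper bound is equivalent to $\mu(U) > 0$.

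Establishing $\mu(U) > 0$ is the only genuine point, and I expect it to be the main obstacle, since it is precisely here that the topological hypotheses enter. As $U$ is open and dense it is in particular a nonempty open set, so $\mu(U) > 0$ follows as soon as $\mu$ assigns positive mass to nonempty open sets (full support), which is the case in the intended applications, e.g.\ for the Haar measure on $\hat{A}$. I would emphasise that this full-support property is the one place where the conditions ``$C$ compact with empty interior'' and ``$U$ open dense'' are used, and that they are tuned so that $C$ is topologically negligible yet measure-theoretically large.

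With $0 < \mu(C) < 1$ in hand, the ergodicity argument yields $\mu(C \triangle \gamma C) > 0$ for every $\gamma \neq \bbb$, so by the reduction above the hypothesis $\mu(F_m \cap \gamma(F_n)) > 0$ of the Proposition is verified. The Proposition, specialised to $N=2$ and hence $A_0 = \ZZ/2\ZZ$, then delivers the desired isomorphism $R \rtimes \Gamma \simeq \CC[\ZZ/2\ZZ \wr \Gamma]$ directly, completing the proof.
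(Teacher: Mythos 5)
Your proposal is correct and follows the only natural route: the paper states this corollary without proof, as an immediate specialization of the preceding proposition to the two-set partition $X=C\sqcup U$ with $N=2$ and $A_0=\ZZ/2\ZZ$, and your reduction of the proposition's hypothesis to $\mu(C\triangle\gamma C)>0$ for all $\gamma\neq\bbb$, hence (via ergodicity of each such $\gamma$) to $0<\mu(C)<1$, is exactly the intended argument. The one point you flag --- that $\mu(U)>0$ needs $\mu$ to assign positive mass to nonempty open sets, which is not among the stated hypotheses --- is a genuine but minor imprecision in the statement of the corollary rather than a defect of your proof; it holds automatically in the intended application ($\hat{A}$ with Haar measure), and the remaining topological hypotheses (compactness, empty interior, $\mu(C)\geq 1/2$) play no role beyond illustrating that $C$ can be topologically small yet measure-theoretically large.
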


\def\cprime{$'$} \def\cprime{$'$}

\end{document}